      \def\@setcopyright{}
      \def\serieslogo@{}
\newcommand{\Complex}{\mathbb C}
\newcommand{\Real}{\mathbb R}
\newcommand{\N}{\mathbb N}
\newcommand{\ddbar}{\overline\partial}
\newcommand{\pr}{\partial}
\newcommand{\ol}{\overline}
\newcommand{\Td}{\widetilde}
\newcommand{\norm}[1]{\left\Vert#1\right\Vert}
\newcommand{\abs}[1]{\left\vert#1\right\vert}
\newcommand{\set}[1]{\left\{#1\right\}}
\newcommand{\To}{\rightarrow}
\theoremstyle{plain}
\newtheorem{thm}{Theorem}[section]
\newtheorem{cor}[thm]{Corollary}
\newtheorem{lem}[thm]{Lemma}
\newtheorem{prop}[thm]{Proposition}
\theoremstyle{definition}
\newtheorem{defn}[thm]{Definition}
\theoremstyle{remark}
\newtheorem{rem}[thm]{Remark}
\numberwithin{equation}{section}
\begin{document}
\title[]{On CR Paneitz operators and CR pluriharmonic functions}
\author[]{Chin-Yu Hsiao}
\address{Institute of Mathematics, Academia Sinica, 6F, Astronomy-Mathematics Building, No.1, Sec.4, Roosevelt Road, Taipei 10617, Taiwan}
\email{chsiao@math.sinica.edu.tw or chinyu.hsiao@gmail.com} 

\begin{abstract}
Let $(X,T^{1,0}X)$ be a compact orientable embeddable three dimensional strongly pseudoconvex CR manifold and let ${\rm P\,}$ be the associated CR Paneitz operator. In this paper, we show that (I) ${\rm P\,}$ is self-adjoint and ${\rm P\,}$ has $L^2$ closed range. Let $N$ and $\Pi$ be the associated partial inverse and the orthogonal projection onto ${\rm Ker\,}{\rm P\,}$ respectively, then $N$ and $\Pi$ enjoy some regularity properties. 
(II) Let $\hat{\mathcal{P}}$ and $\hat{\mathcal{P}_0}$ be the space of $L^2$ CR pluriharmonic functions and the space of real part of $L^2$ global CR functions respectively. Let $S$ be the associated Szeg\"o projection and let $\tau$, $\tau_0$ be the orthogonal projections onto $\hat{\mathcal{P}}$ and $\hat{\mathcal{P}_0}$ respectively. Then, $\Pi=S+\ol S+F_0$, $\tau=S+\ol S+F_1$, $\tau_0=S+\ol S+F_2$, where $F_0, F_1, F_2$ are smoothing operators on $X$. In particular, $\Pi$, $\tau$ and $\tau_0$ are Fourier integral operators with complex phases and $\hat{\mathcal{P}}^\perp\bigcap{\rm Ker\,}{\rm P\,}$, $\hat{\mathcal{P}_0}^\perp\bigcap\hat{\mathcal{P}}$, $\hat{\mathcal{P}_0}^\perp\bigcap{\rm Ker\,}{\rm P\,}$ are all finite dimensional subspaces of $C^\infty(X)$ (it is well-known that $\hat{\mathcal{P}_0}\subset\hat{\mathcal{P}}\subset{\rm Ker\,}{\rm P\,}$). (III) ${\rm Spec\,}{\rm P\,}$ is a discrete subset of $\Real$ and for every $\lambda\in{\rm Spec\,}{\rm P\,}$, $\lambda\neq0$, $\lambda$ is an eigenvalue of ${\rm P\,}$ and the associated eigenspace $H_\lambda({\rm P\,})$ is a finite dimensional subspace of $C^\infty(X)$. 
\end{abstract} 

\maketitle \tableofcontents

\section{Introduction and statement of the main results} \label{s-intro}

Let $(X,T^{1,0}X)$ be a compact orientable embeddable strongly pseudoconvex CR manifold of dimension three. Let ${\rm P\,}$ be the associated Paneitz operator and let $\hat{\mathcal{P}}$ be the space of $L^2$ CR pluriharmonic functions. The operator ${\rm P\,}$ and the space $\hat{\mathcal{P}}$ play important roles in CR embedding problems and CR conformal geometry (see~\cite{CY13}~\cite{CCY10},~\cite{CCY12}). The operator 
\[{\rm P\,}:{\rm Dom\,}{\rm P\,}\subset L^2(X)\To L^2(X)\] 
is a real, symmetric, fourth order non-hypoelliptic partial differential operator and $\hat{\mathcal{P}}$ is an infinite dimensional subspace of $L^2(X)$.  In CR embedding problems and CR conformal geometry, it is crucial to be able to answer the following fundamental analytic problems about ${\rm P\,}$ and $\hat{\mathcal{P}}$ (see~\cite{CY13}~\cite{CCY10},~\cite{CCY12}): 

(I) Is ${\rm P\,}$ self-adjoint? Does ${\rm P\,}$ has $L^2$ closed range? What is ${\rm Spec\,}{\rm P\,}$ ? 

(II) If we have ${\rm P\,}u=f$, where $f$ is in some Sobolev space $H^s(X)$, $s\in\mathbb Z$, and $u\perp{\rm Ker\,}{\rm P\,}$. Can we have $u\in H^{s'}(X)$, for some $s'\in\mathbb Z$? 

(III) It is well-known (see Lee~\cite{Lee88}) that  $\hat{\mathcal{P}}\subset{\rm Ker\,}{\rm P\,}$ and if $X$ has torsion zero then $\hat{\mathcal{P}}={\rm Ker\,}{\rm P\,}$. It remains an important problem to determine the precise geometrical condition under which the kernel of ${\rm P\,}$ is exactly the CR pluriharmonic functions or even a direct sum of a finite dimensional subspace with CR pluriharmonic functions. 

(IV) Let $\Pi$ be the orthogonal projection onto ${\rm Ker\,}{\rm P\,}$ and let $\tau$ be the orthogonal projection onto $\hat{\mathcal{P}}$. Let $\Pi(x,y)$ and $\tau(x,y)$ denote the distribution kernels of $\Pi$ and $\tau$ respectively.  The ${\rm P'\,}$ operator introduced in Case and Yang~\cite{CY13} plays a critical role in CR conformal geometry. To understand the operator 
${\rm P'\,}$, it is crucial to be able to know the exactly forms of $\Pi(x,y)$ and $\tau(x,y)$.

The purse of this work is to completely answer this questions. On the other hand, in several complex variables, the study of the associated Szeg\"o projection $S$ and $\tau$ are classical subjects. The operator $S$ is well-understood; $S$ is a Fourier integral operator with complex phase (see Boutet de Monvel-Sj\"ostrand~\cite{BouSj76},~\cite{Hsiao08},~\cite{HM14}). But for $\tau$, there are fewer results.   In this paper, by using the Paneitz operator ${\rm P\,}$, we could prove that $\tau$ is also a complex Fourier integral operator and $\tau=S+\ol S+F_1$, $F_1$ is a smoothing operator. It is quite interesting to see if the result hold in dimension$\geq5$.  We hope that the Paneitz operator ${\rm P\,}$ will be interesting for complex analysts and will be useful in several complex variables. 

We now formulate the main results. We refer to section~\ref{s:prelim} for some standard notations and terminology used here.

Let $(X,T^{1,0}X)$ be a compact orientable $3$-dimensional strongly pseudoconvex CR manifold, 
where $T^{1,0}X$ is a CR structure of $X$. We assume throughout that it is CR embeddable in some $\mathbb{C}^N$, for some $N\in\mathbb N$. Fix a contact form $\theta\in C^\infty(X,T^*X)$ compactable with the CR structure $T^{1,0}X$. Then, $(X,T^{1,0}X,\theta)$ is a $3$-dimensional pseudohermitian manifold. Let $T\in C^\infty(X,TX)$ be the real non-vanishing global vector field given by 
\[\begin{split}
&\langle\,d\theta\,,\,T\wedge u\,\rangle=0,\ \ \forall u\in T^{1,0}X\oplus T^{0,1}X,\\
&\langle\,\theta\,,\,T\,\rangle=-1.
\end{split}\]
Let $\langle\,\cdot\,|\,\cdot\,\rangle$ be the Hermitian inner product on $\Complex TX$ given by 
\[\begin{split}
&\langle Z_1 | Z_2 \rangle=-\frac{1}{2i}\langle\,d\theta\,,\,Z_1\wedge\overline{Z}_2\,\rangle, Z_1, Z_2\in T^{1,0}X,\\
&T^{1,0}X\perp T^{0,1}X:=\ol{T^{1,0}X},\ \ T\perp(T^{1,0}X\oplus T^{0,1}X),\ \ \langle\,T\,|\,T\,\rangle=1.
\end{split}\]
The Hermitian metric $\langle\,\cdot\,|\,\cdot\,\rangle$ on $\Complex TX$ induces a Hermitian metric $\langle\,\cdot\,|\,\cdot\,\rangle$ on $\Complex T^*X$. Let $T^{*0,1}X$ be the bundle of $(0,1)$ forms of $X$. Take $\theta\wedge d\theta$ be the volume form on $X$, we then get natural inner products on $C^\infty(X)$ and $\Omega^{0,1}(X):=C^\infty(X,T^{*0,1}X)$ induced by $\theta\wedge d\theta$ and $\langle\,\cdot\,|\,\cdot\,\rangle$. We shall use $(\,\cdot\,|\,\cdot\,)$ to denote these inner products and use $\norm{\cdot}$ to denote the corresponding norms. Let $L^2(X)$ and $L^2_{(0,1)}(X)$ denote the completions of $C^\infty(X)$ and $\Omega^{0,1}(X)$ with respect to $(\,\cdot\,|\,\cdot\,)$ respectively. Let 
\[\Box_b:=\ddbar^{*,f}_b\ddbar_b:C^\infty(X)\To C^\infty(X)\]
be the Kohn Laplacian (see~\cite{Hsiao08}), where $\ddbar_b:C^\infty(X)\To\Omega^{0,1}(X)$ is the tangential Cauchy-Riemann operator and $\ddbar^{*,f}_b:\Omega^{0,1}(X)\To C^\infty(X)$ is the formal adjoint of $\ddbar_b$ with respect to $(\,\cdot\,|\,\cdot\,)$. That is, $(\,\ddbar_bf\,|\,g\,)=(\,f\,|\,\ddbar^{*,f}_bg\,)$, for every $f\in C^\infty(X)$, $g\in\Omega^{0,1}(X)$. 

Let $\mathcal P$ be the set of all CR pluriharmonic functions on $X$. That is, 
\begin{equation}\label{e-gue140325III}
\begin{split}
\mathcal P=&\{u\in C^\infty(X,\Real);\, \forall x_0\in X, \mbox{there is a $f\in C^\infty(X)$}\\
&\mbox{with $\ddbar_bf=0$ near $x_0$ and ${\rm Re\,}f=u$ near $x_0$}\}. 
\end{split}
\end{equation} 
The Paneitz operator
\[{\rm P\,}:C^\infty(X)\To C^\infty(X)\]
can be characterized as follows (see section 4 in~\cite{CY13} and Lee~\cite{Lee88}): ${\rm P\,}$ is a fourth order partial differential operator, real, symmetric, $\mathcal{P}\subset{\rm Ker\,}{\rm P\,}$ and
\begin{equation}\label{e-gue140325}
\begin{split}
&{\rm P\,}f=\Box_b\ol\Box_bf+L_1\circ L_2f+L_3f,\ \ \forall f\in C^\infty(X),\\
&L_1, L_2, L_3\in C^\infty(X,T^{1,0}X\oplus T^{0,1}X).
\end{split}
\end{equation}
We extend ${\rm P\,}$ to $L^2$ space by 
\begin{equation}\label{e-gue140325I}
\begin{split}
&{\rm P\,}:{\rm Dom\,}{\rm P\,}\subset L^2(X)\To L^2(X),\\
&{\rm Dom\,}{\rm P\,}=\set{u\in L^2(X);\, {\rm P\,}u\in L^2(X)}.
\end{split}
\end{equation}

Let $\hat{\mathcal{P}}\subset L^2(X)$ be the completion of 
$\mathcal P$ with respect to $(\,\cdot\,|\,\cdot\,)$. Then, 
\[\hat{\mathcal P}\subset{\rm Ker\,}{\rm P\,}.\] Put 
\[
\mathcal P_0=\{{\rm Re\,}f\in C^\infty(X,\Real);\, \mbox{$f\in C^\infty(X)$ is a global CR function on $X$}\}
\]
and let $\hat{\mathcal P_0}\subset L^2(X)$ be the completion of $\mathcal P_0$ with respect to $(\,\cdot\,|\,\cdot\,)$. It is clearly that $\hat{\mathcal P_0}\subset\hat{\mathcal P}\subset{\rm Ker\,}{\rm P\,}$. Let 
\begin{equation}\label{e-gue140325IV}
\begin{split}
&\tau:L^2(X)\To\hat{\mathcal{P}},\\
&\tau_0:L^2(X)\To\hat{\mathcal{P}_0},
\end{split}
\end{equation}
be the orthogonal projections. 

We recall

\begin{defn} \label{d-gue140211}
Suppose $Q$ is a closed densely defined self-adjoint operator
\[Q:{\rm Dom\,}Q\subset H\To{\rm Ran\,}Q\subset H,\]
where $H$ is a Hilbert space. Suppose that $Q$ has closed range. By the partial inverse of $Q$, we
mean the bounded operator $M: H\To {\rm Dom\,}Q$ such that
\[\begin{split}
&\mbox{$QM+\pi=I$ on $H$},\\
&\mbox{$MQ+\pi=I$ on ${\rm Dom\,}Q$},
\end{split}\]
where $\pi:H\To{\rm Ker\,}Q$ is the orthogonal projection.
\end{defn}

The main purpose of this work is to prove the following 

\begin{thm}\label{t-gue140325} 
With the notations and assumptions above, 
\[{\rm P\,}:{\rm Dom\,}{\rm P\,}\subset L^2(X)\To L^2(X)\]
is self-adjoint and ${\rm P\,}$ has $L^2$ closed range. Let $N:L^2(X)\To{\rm Dom\,}{\rm P\,}$ be the partial inverse and let $\Pi:L^2(X)\To{\rm Ker\,}{\rm P\,}$ be the orthogonal projection. Then, 
\begin{equation}\label{e-gue140325V}
\begin{split}
&\mbox{$\Pi, \tau, \tau_0:H^s(X)\To H^s(X)$ is continuous, $\forall s\in\mathbb Z$},\\
&\mbox{$N:H^s(X)\To H^{s+2}(X)$ is continuous, $\forall s\in\mathbb Z$},
\end{split}
\end{equation}
\begin{equation}\label{e-gue140325VI}
\mbox{$\Pi\equiv\tau$ on $X$, $\Pi\equiv\tau_0$ on $X$}
\end{equation}
and the kernel $\Pi(x,y)\in\mathscr D'(X\times X)$ of $\Pi$ satisfies 
\begin{equation}\label{e-gue140325VII}
\Pi(x, y)\equiv\int^{\infty}_{0}e^{i\varphi(x, y)t}a(x, y, t)dt+\int^{\infty}_{0}e^{-i\ol\varphi(x, y)t}\ol a(x, y, t)dt,
\end{equation}
where
\begin{equation}\label{e-gue140325VIII}
\begin{split}
&\varphi\in C^\infty(X\times X),\ \ {\rm Im\,}\varphi(x, y)\geq0,\ \ d_x\varphi|_{x=y}=-\theta(x),\\
&\varphi(x,y)=-\ol\varphi(y,x),\\
&\mbox{$\varphi(x,y)=0$ if and only if $x=y$},
\end{split}
\end{equation}
(see Theorem~\ref{t-gue140305I} and Theorem~\ref{t-gue140305II} for more properties of the phase $\varphi$), and 
\begin{equation}\label{e-gue140325aI}\begin{split}
&a(x, y, t)\in S^{1}_{{\rm cl\,}}(X\times X\times]0, \infty[), \\
&\mbox{$a(x, y, t)\sim\sum^\infty_{j=0}a_j(x, y)t^{1-j}$ in $S^1_{1,0}(X\times X\times]0,\infty[)$},\\
&a_j(x,y)\in C^\infty(X\times X),\ \ j=0,1,\ldots,\\
&a_0(x,x)=\frac{1}{2}\pi^{-n},\ \ \forall x\in X.
\end{split}\end{equation}
(See section~\ref{s:prelim} and Definition~\ref{d-gue140329} for the precise meanings of the notation $\equiv$ and the H\"ormander symbol spaces $S^1_{{\rm cl\,}}(X\times X\times]0,\infty[)$ and $S^1_{1,0}(X\times X\times]0,\infty[)$.
\end{thm}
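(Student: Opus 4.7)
The strategy is to build a parametrix for $P$ whose projection part is explicitly $S+\ol S$ modulo smoothing, and then to promote the parametrix identities to the functional-analytic conclusions claimed in the theorem. The primary input is the Boutet de Monvel--Sj\"ostrand theorem: since $X$ is a compact orientable embeddable strongly pseudoconvex CR $3$-manifold, $\Box_b$ has $L^2$ closed range and the Szeg\"o projection $S$ onto ${\rm Ker\,}\ddbar_b\cap L^2(X)$ is a complex Fourier integral operator whose distribution kernel has exactly the form of the first summand in \eqref{e-gue140325VII}, with the phase $\varphi$ described in \eqref{e-gue140325VIII}. Taking complex conjugates gives $\ol S$ as an FIO with phase $-\ol\varphi$, accounting for the second summand.

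The decisive symbolic step is to verify that $P(S+\ol S)\equiv 0$ modulo a smoothing operator. By Lee's theorem, every smooth CR function $f$ satisfies $Pf=0$; applying $P$ to the oscillatory ansatz $e^{i\varphi(x,y)t}a(x,y,t)$ defining the Szeg\"o kernel and using the transport equations for the symbol $a$ forces $PS$ to vanish to infinite order. The decomposition $P=\Box_b\ol\Box_b+L_1\circ L_2+L_3$ lets us track the cancellation microlocally: the relations $\ddbar_b S\equiv 0$ and $\pr_b\ol S\equiv 0$ modulo smoothing, together with the subprincipal relations among the Szeg\"o symbol coefficients, are precisely what make the contributions of $\Box_b\ol\Box_b$, $L_1\circ L_2$ and $L_3$ cancel at every order. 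Conjugation then yields $P\ol S\equiv 0$. With the ansatz $\Td\Pi:=S+\ol S$, a microlocal Hodge-type decomposition produces a parametrix $\Td N$ of order $-2$ with
\[
P\Td N+\Td\Pi\equiv I,\qquad \Td N P+\Td\Pi\equiv I;
\]
away from the characteristic variety $\Sigma=\set{\lambda\theta(x):\lambda\in\Real\setminus\set{0}}$ this uses ellipticity of $P$, while on the two rays $\Sigma^+$ and $\Sigma^-$ the respective Szeg\"o projection absorbs the singular contribution and a complex FIO inverse of $\Box_b$ or $\ol\Box_b$ (gaining two derivatives) supplies the rest of $\Td N$.

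Once these approximate identities are in hand, self-adjointness and $L^2$ closed range of $P$ follow by standard functional analysis: formal self-adjointness together with the existence of a parametrix modulo compact operators force the minimal and maximal closed extensions of $P$ to coincide, and ${\rm Ran\,}P$ to be closed of finite codimension in every Sobolev space. The true partial inverse $N$ and the true projection $\Pi$ then differ from $\Td N$ and $\Td\Pi$ by smoothing operators, which delivers the continuity statements \eqref{e-gue140325V} and the Fourier integral representation \eqref{e-gue140325VII}--\eqref{e-gue140325aI}. To obtain $\Pi\equiv\tau$ and $\Pi\equiv\tau_0$, I would observe that $(S+\ol S)$ acting on real-valued functions lands in $\mathcal{P}_0$ up to a smoothing error, and use the inclusions $\hat{\mathcal{P}_0}\subset\hat{\mathcal{P}}\subset{\rm Ker\,}P$ to conclude that $\Pi-\tau$ and $\Pi-\tau_0$ are finite-rank smoothing operators (with smooth, finite-dimensional images). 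The principal obstacle is expected to be the order-by-order symbolic cancellation in $P(S+\ol S)$, where the specific algebraic structure of the lower-order terms $L_1\circ L_2+L_3$ of the Paneitz operator must interlock with the Szeg\"o transport equations; this is the technical heart of the construction, and everything else follows from it by now-standard machinery.
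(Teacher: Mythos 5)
Your high-level outline---reduce to the Szeg\"o projection via Boutet de Monvel--Sj\"ostrand, produce a parametrix whose projection part is $S+\ol S$, then deduce self-adjointness, closed range and Sobolev continuity---does match the paper's route, but you misplace the technical difficulty and, as a consequence, assert a parametrix identity that cannot be obtained the way you describe. The relation $P(S+\ol S)=0$ holds \emph{exactly}, not merely modulo smoothing: for $u\in C^\infty(X)$ the function $Su$ is a smooth CR function, so ${\rm Re\,}(Su),{\rm Im\,}(Su)\in\mathcal P$ and Lee's theorem gives $P(Su)=0$; conjugation gives $P\ol Su=0$. There is no order-by-order symbolic cancellation against the Szeg\"o transport equations to verify, and the algebraic structure of $L_1\circ L_2+L_3$ plays no role at that step.

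The genuine obstacle is elsewhere, and your proposal glosses over it. Since $P$ is not hypoelliptic, no one-shot parametrix has a smoothing error: taking $\Td N=\ol GG$ (with $G$ the partial inverse of $\Box_b$) one computes
\[
P\ol GG=I-S-\ol S+\ol SS+[\ol S,\Box_b]G+L_1\circ L_2\ol GG+L_3\ol GG,
\]
and the remainder $R_0$ gains only half a Sobolev derivative. Establishing even this half-derivative gain is the technical heart of the paper: one needs $L\circ G\in L^{-1/2}_{1/2,1/2}(X)$ (Theorem~\ref{t-gue140325III}, which exploits the quadratic vanishing of ${\rm Im\,}\psi$ on $\Sigma$ via \eqref{e:c-sj1}), the analogous estimate for $[\ol S,\Box_b]G$, and the cancellation $\ol SS\equiv0\equiv S\ol S$ (Lemma~\ref{l-gue140326I}). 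Your claim of a $\Td N$ with $P\Td N+\Td\Pi\equiv I$ and $\Td N P+\Td\Pi\equiv I$ modulo smoothing, built by microlocalizing near $\Sigma^\pm$ and inverting $\Box_b$ or $\ol\Box_b$ there, is therefore not available; the paper instead iterates the half-derivative parametrix to a family $A_m,R_m$ with $R_m:H^s\To H^{s+(m+1)/2}$ (Theorem~\ref{t-gue140328}), and then uses an adjoint-and-idempotence bootstrap together with $\ol SS\equiv0$, $S\ol S\equiv0$ to extract $\Pi\equiv S+\ol S$ and $N:H^s\To H^{s+2}$ (Theorem~\ref{t-gue140330}). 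The functional-analytic endgame you sketch (Rellich compactness, coincidence of minimal and maximal extensions, $\tau\equiv\Pi$ via a finite-rank smoothing difference) is sound in spirit, but without the iterated parametrix and the $\ol SS\equiv0$ lemma there is nothing from which to launch it.
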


\begin{rem}\label{r-gue140329}
With the notations and assumptions used in Theorem~\ref{t-gue140325}, it is easy to see that $\Pi$ is real, that is $\Pi=\ol\Pi$. 
\end{rem}

\begin{rem}\label{r-gue140325}
With the notations and assumptions used in Theorem~\ref{t-gue140325}, let $S:L^2(X)\To{\rm Ker\,}\ddbar_b$ be the Szeg\"o projection. That is, $S$ is the orthogonal projection onto ${\rm Ker\,}\ddbar_b=\set{u\in L^2(X);\, \ddbar_bu=0}$ with respect to $(\,\cdot\,|\,\cdot\,)$. In view of the proof of Theorem~\ref{t-gue140325} (see section~\ref{s-gue140326}), we see that $\Pi\equiv S+\ol S$ on $X$. 
\end{rem}

We have the classical formulas
\begin{equation} \label{e:0}
\int^\infty_0\!\!e^{-tx}t^mdt= 
\begin{cases}
m!x^{-m-1}, &\text{if } m\in\mathbb Z,\ \ m\geq 0,  \\
\frac{(-1)^m}{(-m-1)!}x^{-m-1}(\log x+c-\sum^{-m-1}_1\frac{1}{j}), &\text{if } m\in\mathbb Z,\ \ m<0.
\end{cases}\end{equation}
Here $x\neq0$, ${\rm Re\,}x\geq0$ and $c$ is the Euler constant, i.e.
$c=\lim_{m\To\infty}(\sum^m_1\frac{1}{j}-\log m)$.
Note that
\begin{equation} \label{e:0809061310}
\int^{\infty}_0\!\!e^{i\varphi(x, y)t}\sum^\infty_{j=0}a_j(x, y)t^{1-j}dt =\lim_{\varepsilon\to0+}
\int^{\infty}_0\!\!e^{-t\bigr(-i(\varphi(x, y)+i\varepsilon)\bigr)}\sum^\infty_{j=0}a_j(x, y)t^{1-j}dt.
\end{equation}
We have the following corollary of Theorem~\ref{t-gue140325}

\begin{cor} \label{c:1}
With the notations and assumptions used in Theorem~\ref{t-gue140325}, 
there exist $F_1, G_1,\in C^\infty(X\times X)$
such that
\[\begin{split}
\Pi(x,y)=&F_1(-i\varphi(x, y))^{-2}+G_1\log(-i\varphi(x, y))\\
&+\ol F_1(i\ol\varphi(x, y))^{-2}+\ol G_1\log(i\ol\varphi(x, y)).\end{split}\]

Moreover, we have
\begin{equation} \label{e:000} \begin{split}
F_1 &=a_0(x, y)+a_1(x,y)(-i\varphi(x, y))+f_1(x, y)(-i\varphi(x, y))^2, \\
G_1 &\equiv\sum^\infty_0\frac{(-1)^{k+1}}{k!}a_{2+k}(x, y)(-i\varphi(x, y))^k, 
\end{split}\end{equation}
where $a_j(x,y)$, $j=0,1,\ldots$, are as in \eqref{e-gue140325aI} and $f_1(x, y)\in C^\infty(X\times X)$. 
\end{cor}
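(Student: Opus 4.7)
The plan is to substitute the asymptotic expansion $a(x,y,t)\sim\sum_{j=0}^\infty a_j(x,y)t^{1-j}$ supplied by \eqref{e-gue140325aI} into the oscillatory integral \eqref{e-gue140325VII} of Theorem~\ref{t-gue140325}, and then evaluate each monomial in $t$ using the classical identity \eqref{e:0}, interpreted via the boundary-value prescription \eqref{e:0809061310}. Since ${\rm Im\,}\varphi\geq 0$ by \eqref{e-gue140325VIII}, substituting $x\mapsto-i\varphi(x,y)$ in \eqref{e:0} is justified on the complement of the diagonal, where $\varphi\neq 0$.

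First I would insert a cutoff $\chi(t)$ vanishing near $t=0$ and truncate the asymptotic series at an index $N$, writing
\[
a(x,y,t)=\sum_{j=0}^{N-1}a_j(x,y)t^{1-j}\chi(t)+r_N(x,y,t),
\]
with $r_N\in S^{1-N}_{1,0}(X\times X\times]0,\infty[)$. By the standard symbol calculus the contribution of $r_N$ to the integral has regularity growing with $N$, so both $r_N$ and the cutoff $\chi$ contribute only $C^\infty(X\times X)$ terms that will later be absorbed into $f_1$.

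Next I would evaluate $\int_0^\infty e^{i\varphi t}a_j(x,y)t^{1-j}\chi(t)\,dt$ for each $j\geq 0$ via \eqref{e:0}. The case $j=0$ ($m=1$) yields $a_0(-i\varphi)^{-2}$; the case $j=1$ ($m=0$) yields $a_1(-i\varphi)^{-1}=a_1(-i\varphi)\cdot(-i\varphi)^{-2}$, so together these produce the prescribed leading terms $a_0+a_1(-i\varphi)$ of $F_1$. For $j\geq 2$, set $k=j-2\geq 0$; the exponent $m=-k-1$ then falls into the second branch of \eqref{e:0}, giving
\[
a_{2+k}(x,y)\cdot\frac{(-1)^{k+1}}{k!}\,(-i\varphi)^{k}\Bigl(\log(-i\varphi)+c-\sum_{l=1}^{k}\frac{1}{l}\Bigr).
\]
Separating the logarithmic coefficient from the polynomial coefficient and summing over $k\geq 0$ produces exactly the asymptotic series $\sum_{k\geq 0}\frac{(-1)^{k+1}}{k!}a_{2+k}(-i\varphi)^{k}$ claimed for $G_1$ in \eqref{e:000}. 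Because $\varphi$ vanishes to first order on the diagonal and only there (by \eqref{e-gue140325VIII}), Borel's lemma realizes this formal sum as a genuine $G_1\in C^\infty(X\times X)$, the resulting ambiguity being a smoothing contribution that can be shifted into $f_1$.

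The non-logarithmic remainders from $j\geq 2$ are asymptotic series of smooth functions times nonnegative powers of $(-i\varphi)$, which again Borel-sum to some $h\in C^\infty(X\times X)$. Setting $f_1$ equal to $h$ plus the smooth errors from truncation and cutoff, one obtains $F_1=a_0+a_1(-i\varphi)+f_1(-i\varphi)^{2}$, and $F_1(-i\varphi)^{-2}$ reproduces the non-logarithmic part of $\Pi$. Applying the same analysis to the conjugate integral $\int_0^\infty e^{-i\ol\varphi t}\ol a(x,y,t)\,dt$ yields the conjugate pair $\ol F_1(i\ol\varphi)^{-2}+\ol G_1\log(i\ol\varphi)$.

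The main obstacle I anticipate is the bookkeeping in the final identification: verifying that the three independent sources of smooth error---the symbol tail $r_N$, the cutoff near $t=0$, and the non-logarithmic polynomial parts from $j\geq 2$---all combine consistently into the single smooth coefficient $f_1(x,y)$, without disturbing the explicit asymptotic description of $G_1$. This is essentially a careful application of Borel summation within the H\"ormander symbol calculus recalled in section~\ref{s:prelim}.
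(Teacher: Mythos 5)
Your proposal is correct and coincides with the approach the paper intends: substitute the classical expansion $a\sim\sum_j a_j t^{1-j}$ into the oscillatory integral \eqref{e-gue140325VII}, evaluate each power of $t$ by the classical formula \eqref{e:0} (interpreted via the regularization \eqref{e:0809061310}), and collect the $j=0,1$ terms into the leading part of $F_1$, the logarithmic coefficients from $j\geq 2$ into $G_1$, and all residual smooth contributions (symbol tail, cutoff near $t=0$, and the non-logarithmic polynomial parts from $j\geq 2$, plus the smoothing error hidden in $\equiv$) into $f_1$, noting that $f_1(-i\varphi)^2(-i\varphi)^{-2}=f_1$ is itself a smooth function. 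The arithmetic of the exponent shift $m=1-j$, the sign $(-1)^{-1-k}=(-1)^{k+1}$, and the Borel summation of $G_1$ all check out.
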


Put 
\[\begin{split}
&\hat{\mathcal{P}}^\perp:=\set{u\in L^2(X);\, (\,u\,|\,f\,)=0, \forall f\in\hat{\mathcal{P}}},\\  &\hat{\mathcal{P}_0}^\perp:=\set{v\in L^2(X);\, (\,v\,|\,g\,)=0, \forall g\in\hat{\mathcal{P}_0}}.\end{split}\] 
From \eqref{e-gue140325VI} and some standard argument in functional analysis (see section~\ref{s-gue140326}), we deduce 

\begin{cor}\label{c-gue140325}
With the notations and assumptions above, 
we have 
\[\hat{\mathcal{P}}^\perp\bigcap{\rm Ker\,}{\rm P\,}\subset C^\infty(X),\ \ \hat{\mathcal{P}_0}^\perp\bigcap{\rm Ker\,}{\rm P\,}\subset C^\infty(X),\ \ \hat{\mathcal{P}_0}^\perp\bigcap\hat{\mathcal{P}}\subset C^\infty(X)\] 
and $\hat{\mathcal{P}}^\perp\bigcap{\rm Ker\,}{\rm P\,}$, $\hat{\mathcal{P}_0}^\perp\bigcap{\rm Ker\,}{\rm P\,}$, $\hat{\mathcal{P}_0}^\perp\bigcap\hat{\mathcal{P}}$ are all finite dimensional. 
\end{cor}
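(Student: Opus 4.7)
The plan is to deduce everything from Theorem~\ref{t-gue140325}. That result yields $\Pi\equiv\tau$ and $\Pi\equiv\tau_0$ on $X$, and combining these gives $\tau\equiv\tau_0$ as well. Thus each of the three pairwise differences $\Pi-\tau$, $\Pi-\tau_0$ and $\tau-\tau_0$ is a smoothing operator on the compact manifold $X$, hence both compact as an operator on $L^2(X)$ and continuous as an operator from $L^2(X)$ to $C^\infty(X)$. Both halves of the corollary will reduce to a short functional-analytic argument once this is in hand.

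For the smoothness assertions, I pick the appropriate difference of projections and apply it to a typical element. If $u\in\hat{\mathcal{P}}^\perp\bigcap{\rm Ker\,}{\rm P\,}$, then $\Pi u=u$ (since $u\in{\rm Ker\,}{\rm P\,}$) and $\tau u=0$ (since $u\perp\hat{\mathcal{P}}$), so $u=(\Pi-\tau)u\in C^\infty(X)$. The same pattern with $\tau_0$ in place of $\tau$ handles $\hat{\mathcal{P}_0}^\perp\bigcap{\rm Ker\,}{\rm P\,}$. For $u\in\hat{\mathcal{P}_0}^\perp\bigcap\hat{\mathcal{P}}$ one instead uses $\tau u=u$ and $\tau_0 u=0$, obtaining $u=(\tau-\tau_0)u\in C^\infty(X)$.

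For finite-dimensionality, each of the three subspaces is closed in $L^2(X)$, being the intersection of two closed subspaces. The identities above show that on each such subspace $V$ the identity map coincides with the restriction of one of the compact operators $\Pi-\tau$, $\Pi-\tau_0$ or $\tau-\tau_0$. In particular, the image of the closed unit ball of $V$ under this compact operator is relatively compact, but this image is precisely the unit ball of $V$. A Hilbert space whose closed unit ball is compact must be finite-dimensional, so $\dim V<\infty$.

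The main substance has already been absorbed into Theorem~\ref{t-gue140325}, so no serious obstacle remains; it is simply a matter of correctly identifying, for each of the three subspaces, which of the three smoothing differences of projections restricts to the identity, and then invoking the standard compactness characterization of finite dimensionality in a Hilbert space.
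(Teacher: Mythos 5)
Your proposal is correct and follows essentially the same strategy as the paper. Both arguments rest on the observation that on each of the three subspaces the identity map agrees with the restriction of one of the smoothing differences of projections $\Pi-\tau$, $\Pi-\tau_0$, $\tau-\tau_0$ (the last obtained by transitivity from the first two), so membership in $C^\infty(X)$ and finite dimensionality follow from the regularizing and compactness properties of smoothing operators on a compact manifold. The only cosmetic difference is in how finite dimensionality is concluded: you invoke the Riesz characterization (compact closed unit ball implies finite dimension), while the paper extracts an orthonormal sequence and derives a contradiction via Rellich's theorem; these are the same argument phrased two ways.
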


We have the orthogonal decompositions 
\begin{equation}\label{e-gue140325aII}
\begin{split}
&{\rm Ker\,}{\rm P\,}=\hat{\mathcal{P}}^\perp\oplus(\hat{\mathcal{P}}^\perp\bigcap{\rm Ker\,}{\rm P\,}),\\
&{\rm Ker\,}{\rm P\,}=\hat{\mathcal{P}_0}^\perp\oplus(\hat{\mathcal{P}_0}^\perp\bigcap{\rm Ker\,}{\rm P\,}),\\
&\hat{\mathcal{P}}=\hat{\mathcal{P}_0}\oplus(\hat{\mathcal{P}_0}^\perp\bigcap\hat{\mathcal{P}}).
\end{split}
\end{equation}
From Corollary~\ref{c-gue140325}, we know that $\hat{\mathcal{P}}^\perp\bigcap{\rm Ker\,}{\rm P\,}$, $\hat{\mathcal{P}_0}^\perp\bigcap{\rm Ker\,}{\rm P\,}$, $\hat{\mathcal{P}_0}^\perp\bigcap\hat{\mathcal{P}}$ are all finite dimensional subsets of $C^\infty(X)$. 

Since ${\rm P\,}$ is self-adjoint, ${\rm Spec\,}{\rm P\,}\subset\Real$. In section~\ref{s-gue140413}, we establish spectral theory for ${\rm P\,}$. 

\begin{thm}\label{t-gue140325I}
With the notations and assumptions above, 
${\rm Spec\,}{\rm P\,}$ is a discrete subset in $\Real$ and for every $\lambda\in{\rm Spec\,}{\rm P\,}$, $\lambda\neq0$, $\lambda$ is an eigenvalue of ${\rm P\,}$ and  the eigenspace 
\[H_\lambda({\rm P\,}):=\set{u\in{\rm Dom\,}{\rm P\,};\, {\rm P\,}u=\lambda u}\]
is a finite dimensional subspace of $C^\infty(X)$. 
\end{thm}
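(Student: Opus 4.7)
The plan is to reduce the spectral theory of ${\rm P\,}$ to the classical spectral theorem for compact self-adjoint operators applied to the partial inverse $N$. By Theorem~\ref{t-gue140325}, $N$ maps $L^2(X)$ continuously into $H^2(X)$; since $X$ is compact, the Rellich embedding $H^2(X)\hookrightarrow L^2(X)$ is compact, so $N\colon L^2(X)\to L^2(X)$ is a compact operator. Moreover $N$ is self-adjoint, as the partial inverse of the self-adjoint operator ${\rm P\,}$.

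Next I would set up a bijection between the nonzero points of ${\rm Spec\,}{\rm P\,}$ and ${\rm Spec\,}N$. If ${\rm P\,}u=\lambda u$ with $u\neq0$ and $\lambda\neq0$, then for every $v\in{\rm Ker\,}{\rm P\,}$ one has $(u|v)=\lambda^{-1}({\rm P\,}u|v)=\lambda^{-1}(u|{\rm P\,}v)=0$, so $\Pi u=0$. Applying the identity $N{\rm P\,}=I-\Pi$ on ${\rm Dom\,}{\rm P\,}$ to $u$ gives $Nu=\lambda^{-1}u$. Conversely, if $Nv=\mu v$ with $\mu\neq0$, then $v\in{\rm Ran\,}N\subset{\rm Dom\,}{\rm P\,}\cap({\rm Ker\,}{\rm P\,})^\perp$, and ${\rm P\,}N=I-\Pi$ on $L^2(X)$ forces ${\rm P\,}v=\mu^{-1}v$. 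Hence $\lambda\mapsto\lambda^{-1}$ is a bijection between the nonzero elements of ${\rm Spec\,}{\rm P\,}$ and those of ${\rm Spec\,}N$, preserving eigenspaces.

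By the spectral theorem for compact self-adjoint operators, ${\rm Spec\,}N\setminus\{0\}$ consists of a countable set of real eigenvalues of finite multiplicity whose only possible accumulation point is $0$. Translating back via $\mu=\lambda^{-1}$, every $\lambda\in{\rm Spec\,}{\rm P\,}\setminus\{0\}$ is an eigenvalue of ${\rm P\,}$ with finite-dimensional eigenspace $H_\lambda({\rm P\,})$, and $|\lambda_k|\to\infty$ along any enumeration of the nonzero eigenvalues; in particular $0$ is an isolated point of ${\rm Spec\,}{\rm P\,}$, so ${\rm Spec\,}{\rm P\,}$ is a discrete subset of $\Real$.

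To conclude $H_\lambda({\rm P\,})\subset C^\infty(X)$, observe that every eigenfunction $u$ satisfies $u=\lambda Nu$. Using the continuity $N\colon H^s(X)\to H^{s+2}(X)$ from Theorem~\ref{t-gue140325} and bootstrapping from $u\in L^2(X)=H^0(X)$, one obtains $u\in\bigcap_{s\in\mathbb Z}H^s(X)=C^\infty(X)$. The only substantive step is the compactness of $N$; once that follows from the Sobolev regularity in Theorem~\ref{t-gue140325} combined with Rellich's theorem, the remainder is a routine translation under $\mu=\lambda^{-1}$ together with an elliptic-type bootstrap.
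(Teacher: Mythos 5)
Your proposal is correct, and it takes a genuinely different (and somewhat more classical) route than the paper. You establish compactness of the partial inverse $N$ directly from the Sobolev bound $N:L^2(X)\to H^2(X)$ of Theorem~\ref{t-gue140325} together with the Rellich embedding, observe that $N$ is self-adjoint (as the partial inverse of the self-adjoint ${\rm P\,}$), set up the bijection $\lambda\leftrightarrow\lambda^{-1}$ between the nonzero parts of ${\rm Spec\,}{\rm P\,}$ and ${\rm Spec\,}N$ via the identities ${\rm P\,}N=I-\Pi$ and $N{\rm P\,}=I-\Pi$, and then invoke the spectral theorem for compact self-adjoint operators. The smoothness of eigenfunctions follows from the bootstrap $u=\lambda N u$ with $N:H^s\to H^{s+2}$. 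The paper instead argues through the spectral measure $E$: it shows ${\rm P\,}\Pi_{[-\lambda,\lambda]}$ is smoothing (Theorem~\ref{t-gue140413}), deduces $\Pi_{[-\lambda,\lambda]}\equiv\Pi$ (Theorem~\ref{t-gue140413I}), and then applies Rellich's theorem inside each spectral band $[-\lambda,-\mu]\cup[\mu,\lambda]$ to rule out infinite-dimensional spectral subspaces and obtain discreteness and regularity. Both arguments rest on the same input (the mapping property of $N$, hence ultimately the microlocal Hodge decomposition), but your reduction to a compact operator is more elementary and makes the structure of the nonzero spectrum transparent in one step, whereas the paper's approach via spectral projections avoids explicitly stating and proving the compactness and self-adjointness of $N$, keeping the treatment uniform with its microlocal-smoothing style. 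One minor point worth making explicit in a final write-up: the self-adjointness of $N$, while standard, should be justified (e.g., note that $N^*$ also satisfies the two defining identities in Definition~\ref{d-gue140211} together with $N^*\Pi=0$, hence coincides with $N$), since the paper never records it.
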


\subsection{The phase $\varphi$}\label{s-gue140325}

In this section, we collect some properties of the phase function $\varphi$. We refer the reader to~\cite{Hsiao08} and~\cite{HM14} for the proofs. 

The following result describes the phase function $\varphi$ in local coordinates.

\begin{thm} \label{t-gue140305I}
With the assumptions and notations used in Theorem~\ref{t-gue140325}, 
for a given point $x_0\in X$, let $\{Z_1\}$
be an orthonormal frame of $T^{1,0}X$ in a neighbourhood of $x_0$,  i.e.\ $\mathcal{L}_{x_{0}}(Z_{1},\overline{Z}_{1})=1$.
Take local coordinates $x=(x_1,x_2,x_3)$, $z=x_1+ix_2$, 
defined on some neighbourhood of $x_0$ such that $\theta(x_0)=dx_{3}$, $x(x_0)=0$, and for some $c\in\Complex$, 
\[Z_1=\frac{\pr}{\pr z}-i\ol z\frac{\pr}{\pr x_{3}}-
cx_{3}\frac{\pr}{\pr x_{3}}+O(\abs{x}^2).\]
Set
$y=(y_1,y_2,y_3)$, $w=y_{1}+iy_2$.
Then, for $\varphi$ in Theorem~\ref{t-gue140325}, we have
\begin{equation} \label{e-gue140205VIm}
{\rm Im\,}\varphi(x,y)\geq c\sum^{2}_{j=1}\abs{x_j-y_j}^2,\ \ c>0,
\end{equation}
in some neighbourhood of $(0,0)$ and
\begin{equation} \label{e-gue140205VIIm}
\begin{split}
&\varphi(x, y)=-x_{3}+y_{3}+i\abs{z-w}^2 \\
&\quad+\Bigr(i(\ol zw-z\ol w)+c(-zx_{3}+wy_{3})\\
&\quad+\ol c(-\ol zx_{3}+\ol wy_{3})\Bigr)+(x_{3}-y_{3})f(x, y) +O(\abs{(x, y)}^3),
\end{split}
\end{equation}
where $f$ is smooth and satisfies $f(0,0)=0$, $f(x, y)=\ol f(y, x)$.
\end{thm}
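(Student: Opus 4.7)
The plan is to obtain $\varphi$ as the phase generating the positive Lagrangian near the diagonal of $X\times X$ that is associated with the Szeg\"o projector $S$ (which, by Remark~\ref{r-gue140325}, controls $\Pi$ since $\Pi\equiv S+\ol S$ on $X$). Any such phase is characterized, up to functions vanishing to infinite order on the diagonal, by the eikonal system
\[
\ol Z_{1,x}\,\varphi(x,y)\equiv 0,\qquad Z_{1,y}\,\varphi(x,y)\equiv 0\pmod{\varphi^\infty},
\]
together with $\varphi(x,x)=0$, $d_x\varphi|_{x=y}=-\theta(x)$, the antisymmetry $\varphi(x,y)=-\ol\varphi(y,x)$, and the positivity ${\rm Im\,}\varphi\geq 0$. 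I would work in the coordinate chart and frame fixed in the statement and expand $\varphi$ as a formal Taylor series at the origin, matching the constraints order by order.

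The linear term is pinned down immediately: $d_x\varphi|_{(0,0)}=-dx_3$, $\varphi(x,x)=0$, and antisymmetry force $\varphi_1(x,y)=-x_3+y_3$. At second order, using the leading expression $\ol Z_1=\pr_{\ol z}+iz\,\pr_{x_3}+O(\abs{x})$, the equation $\ol Z_{1,x}\varphi\equiv 0$ modulo $\varphi$ determines the quadratic coefficients along the tangential directions; the sign of $i\abs{z-w}^2$ is selected by ${\rm Im\,}\varphi\geq 0$ and its magnitude by the Levi normalization $\mathcal L_{x_0}(Z_1,\ol Z_1)=1$, while combining the first eikonal with its companion $Z_{1,y}\varphi\equiv 0$ pins down the cross term $i(\ol zw-z\ol w)$. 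Incorporating the next correction $-\ol c\,x_3\pr_{x_3}$ in $\ol Z_1$ produces an inhomogeneous contribution whose particular solution is $c(-zx_3+wy_3)+\ol c(-\ol zx_3+\ol wy_3)$; its precise form is forced by antisymmetry and complex conjugation. The residual freedom at this order consists of terms divisible by $x_3-y_3$, which I would write as $(x_3-y_3)f(x,y)$ with $f(0,0)=0$ (so as not to perturb the already-fixed linear part) and $f(x,y)=\ol f(y,x)$ (from antisymmetry); all higher-order corrections are absorbed into the $O(\abs{(x,y)}^3)$ remainder.

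The lower bound ${\rm Im\,}\varphi(x,y)\geq c\sum_{j=1}^{2}\abs{x_j-y_j}^2$ then follows because the leading imaginary quadratic form is $\abs{z-w}^2=\abs{x_1-y_1}^2+\abs{x_2-y_2}^2$, which strictly dominates the cubic remainder on a small enough neighbourhood of $(0,0)$. The main obstacle is giving rigorous meaning to the slogan \emph{eikonal equation modulo $\varphi^\infty$} when $\varphi$ is genuinely complex-valued: the classical method of characteristics is unavailable, and one must invoke the almost-analytic Hamilton--Jacobi machinery of Melin--Sj\"ostrand to construct $\varphi$, prove its uniqueness up to equivalence, and justify the formal Taylor matching just sketched. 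Once this framework is in place the computation of the displayed Taylor coefficients is direct but lengthy, which is why the statement is quoted from~\cite{Hsiao08} and~\cite{HM14} rather than reproved.
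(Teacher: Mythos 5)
The paper does not prove this theorem; it explicitly defers to~\cite{Hsiao08} and~\cite{HM14}. Your sketch is a faithful outline of what those references do: characterize the Szeg\"o phase $\varphi$ by the two-sided eikonal system $\ol Z_{1,x}\varphi\equiv 0$, $Z_{1,y}\varphi\equiv 0$ modulo $\varphi^\infty$, impose the normalizations $\varphi|_{x=y}=0$, $d_x\varphi|_{x=y}=-\theta$, ${\rm Im\,}\varphi\geq 0$, $\varphi(x,y)=-\ol\varphi(y,x)$, and then match Taylor coefficients in the adapted chart. Your coefficient-by-coefficient bookkeeping is correct: the linear term $-x_3+y_3$ from the differential condition and antisymmetry; the $i\abs{z-w}^2$ from the Levi normalization $\mathcal L_{x_0}(Z_1,\ol Z_1)=1$ and positivity; the $c$-dependent quadratic terms forced by the $-cx_3\partial_{x_3}$ correction to $Z_1$; and the residual $(x_3-y_3)f(x,y)$ freedom with the stated constraints on $f$. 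You also correctly identify the genuine technical content as the Melin--Sj\"ostrand almost-analytic Hamilton--Jacobi machinery, since $\varphi$ is complex-valued and the ``eikonal modulo $\varphi^\infty$'' slogan needs that framework to be made rigorous. The only route difference worth noting is that in~\cite{Hsiao08} the phase is not obtained directly from a stationary eikonal for $\varphi(x,y)$; rather one first solves the time-dependent eikonal \eqref{e:i-chara} for the heat phase $\psi(t,x,\eta)$, passes to $\psi(\infty,x,\eta)$, and then converts to the complex Fourier integral operator form $\int_0^\infty e^{i\varphi(x,y)t}a\,dt$ via the global theory of FIOs with complex phase. The two routes produce equivalent phases (in the sense of Definition~\ref{d-gue140305}), and the Taylor expansion in Theorem~\ref{t-gue140305I} is then verified by essentially the direct calculation you describe, so the difference is in how $\varphi$ is manufactured, not in how its local form is extracted.
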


\begin{defn}\label{d-gue140305}
With the assumptions and notations used in Theorem~\ref{t-gue140325}, let $\varphi_1(x,y), \varphi_2(x,y)\in C^\infty(X\times X)$. We assume that $\varphi_1(x,y)$ and $\varphi_2(x,y)$ satisfy 
\eqref{e-gue140325VIII} and \eqref{e-gue140205VIm}. We say that $\varphi_1(x,y)$ and $\varphi_2(x,y)$ are equivalent on $X$ if for any  $b_1(x,y,t)\in  S^{1}_{{\rm cl\,}}(X\times X\times]0, \infty[)$
we can find $b_2(x,y,t)\in  S^{1}_{{\rm cl\,}}(X\times X\times]0, \infty[)$
such that 
\[\int^\infty_0e^{i\varphi_1(x,y)t}b_1(x,y,t)dt\equiv e^{i\varphi_2(x,y)t}b_2(x,y,t)dt\ \ \mbox{on $X$}\]
and vise versa. 
\end{defn} 

We characterize the phase $\varphi$

\begin{thm} \label{t-gue140305II}
With the assumptions and notations used in Theorem~\ref{t-gue140325}, let $\varphi_1(x,y)\in C^\infty(X\times X)$. We assume that $\varphi_1(x,y)$ satisfies \eqref{e-gue140325VIII} and \eqref{e-gue140205VIm}. $\varphi_1(x,y)$ and $\varphi(x,y)$ are equivalent on $X$ in the sense of Definition~\ref{d-gue140305} if and only if there is a function $h\in C^\infty(X\times X)$ such that $\varphi_1(x,y)-h(x,y)\varphi(x,y)$
vanishes to infinite order at $x=y$, for every $(x,x)\in X\times X$. 
\end{thm}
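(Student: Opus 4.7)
The plan is to reduce both directions to a local analysis on a neighborhood of the diagonal and to exploit the non-degeneracy of $\varphi$ there. First I would observe that since $\varphi(x,y)=0$ if and only if $x=y$ and $\mathrm{Im}\,\varphi\geq 0$, one has $\mathrm{Im}\,\varphi(x,y)\geq c>0$ outside any fixed open neighborhood of $\{x=y\}\subset X\times X$; by repeated integration by parts in $t$, any oscillatory integral $\int_0^\infty e^{i\varphi(x,y)t}a(x,y,t)\,dt$ with $a\in S^{1}_{\mathrm{cl}}$ is smooth off the diagonal, and the same holds for $\varphi_1$. Hence equivalence is a statement about symbols modulo infinite-order vanishing along $\{x=y\}$, and we may work in the local coordinates of Theorem~\ref{t-gue140305I}.

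For the sufficiency direction, assume $\varphi_1=h\varphi+r$ with $r$ vanishing to infinite order on $\{x=y\}$. Because both phases vanish on the diagonal with the same first-order jet $-\theta$, I deduce $h(x,x)=1$, so $\mathrm{Re}\,h>0$ near the diagonal and the substitution $s=h(x,y)t$ is valid. It transforms
\[
\int_0^\infty e^{i\varphi_1(x,y)t}b_1(x,y,t)\,dt=\int_0^\infty e^{i\varphi(x,y)s}\,e^{irs/h}\,b_1(x,y,s/h)\,\frac{ds}{h}.
\]
Taylor expanding $e^{irs/h}=\sum_k(irs/h)^k/k!$, each term contains $r^k$, which vanishes to infinite order on the diagonal; the standard principle that oscillatory integrals with phase $\varphi$ and amplitude vanishing to infinite order on $\{x=y\}$ are smoothing—proved by integrating by parts in $x,y$ and trading $(x-y)$-factors for negative powers of $t$ using the non-degeneracy of $\varphi$ together with \eqref{e-gue140205VIm}—handles the tail of the expansion. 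A Borel summation then produces a genuine $b_2\in S^{1}_{\mathrm{cl}}(X\times X\times]0,\infty[)$ realizing the equivalence, and the symmetric argument yields the converse substitution.

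For the necessity direction, I would construct $h$ at each order by iterative division at the diagonal. The matching of first-order jets $d_x\varphi_1|_{x=y}=d_x\varphi|_{x=y}=-\theta$ forces $h(x,x)=1$; expanding $\varphi_1-h\varphi$ as a Taylor series in $(x-y)$ and using the coordinate form of $\varphi$ in Theorem~\ref{t-gue140305I}, where $\varphi$ has non-vanishing $x_3$-derivative on the diagonal, I would solve for each Taylor coefficient of $h$ inductively to make $\varphi_1-h\varphi$ vanish to the next order. Borel's theorem then realizes the resulting formal power series as a genuine $h\in C^\infty(X\times X)$. The main obstacle is that $\varphi$ is complex-valued and its zero set degenerates quadratically in the horizontal directions, so ordinary real Malgrange division does not apply directly; one must perform the division in the formal Taylor algebra at the diagonal and use almost-analytic extensions in the spirit of Melin--Sj\"ostrand to patch the local constructions into a global $h$, while verifying that the indeterminacy at each step is absorbed into the infinite-order vanishing allowed in the conclusion.
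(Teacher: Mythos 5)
The paper itself offers no proof of this theorem --- it refers to \cite{Hsiao08} and \cite{HM14} --- so the comparison can only be against the Melin--Sj\"ostrand theory of complex-phase Fourier integral operators on which those references rely. Your sufficiency direction is essentially the right argument: the substitution $s=h(x,y)t$, together with a contour deformation (needed because $h$ is complex-valued, even if close to $1$) and the fact that amplitudes flat on the diagonal yield smoothing kernels, does transfer a symbol $b_1$ for $\varphi_1$ into a symbol $b_2$ for $\varphi$. You should make the contour deformation and the role of almost-analytic continuation explicit, but these are standard.

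The necessity direction, as written, has a genuine gap. You propose to construct $h$ by iterated formal division at the diagonal, solving for each Taylor coefficient of $h$ via $\partial_{x_3}\varphi\neq0$. This iteration does not close unconditionally. With $h=1+h_1+\cdots$, the degree-two part of $\varphi_1-h\varphi$ in $(x-y)$ is $\varphi_1^{(2)}-\varphi^{(2)}+h_1\cdot(x_3-y_3)$. The linear function $h_1$ can only absorb the $(x_3-y_3)$-divisible part of $\varphi_1^{(2)}-\varphi^{(2)}$; the purely horizontal quadratic piece in $(z-w,\bar z-\bar w)$ is untouched by any choice of $h_1$. For example, $\varphi_1=-(x_3-y_3)+2i|z-w|^2+\cdots$ and $\varphi=-(x_3-y_3)+i|z-w|^2+\cdots$ both satisfy \eqref{e-gue140325VIII} and \eqref{e-gue140205VIm} yet admit no such $h$. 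The content of the necessity direction is precisely that the hypothesis --- equivalence of the oscillatory integrals modulo smoothing operators --- rules out such discrepancies, by forcing $\varphi_1$ and $\varphi$ to parametrize the same positive almost-analytic Lagrangian; only then is the formal division possible. Your argument never uses the equivalence hypothesis in this direction, and as written it would ``prove'' the false statement that any two admissible phases are equivalent. Filling the gap requires a stationary-phase or wave-front analysis of the kernels, in the spirit of Melin--Sj\"ostrand's equivalence-of-phase-functions theorem, to extract the equality of Lagrangians before attempting the division.
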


\section{Preliminaries}\label{s:prelim}

We shall use the following notations: $\Real$ is the set of real numbers, $\ol\Real_+:=\set{x\in\Real;\, x\geq0}$, $\mathbb N=\set{1,2,\ldots}$, $\mathbb N_0=\mathbb N\bigcup\set{0}$. An element $\alpha=(\alpha_1,\ldots,\alpha_n)$ of $\mathbb N_0^n$ will be called a multiindex, the size of $\alpha$ is: $\abs{\alpha}=\alpha_1+\cdots+\alpha_n$ and the length of $\alpha$ is $l(\alpha)=n$. For $m\in\mathbb N$, we write $\alpha\in\set{1,\ldots,m}^n$ if $\alpha_j\in\set{1,\ldots,m}$, $j=1,\ldots,n$. We say that $\alpha$ is strictly increasing if $\alpha_1<\alpha_2<\cdots<\alpha_n$. We write $x^\alpha=x_1^{\alpha_1}\cdots x^{\alpha_n}_n$, 
$x=(x_1,\ldots,x_n)$,
$\pr^\alpha_x=\pr^{\alpha_1}_{x_1}\cdots\pr^{\alpha_n}_{x_n}$, $\pr_{x_j}=\frac{\pr}{\pr x_j}$, $\pr^\alpha_x=\frac{\pr^{\abs{\alpha}}}{\pr x^\alpha}$, $D^\alpha_x=D^{\alpha_1}_{x_1}\cdots D^{\alpha_n}_{x_n}$, $D_x=\frac{1}{i}\pr_x$, $D_{x_j}=\frac{1}{i}\pr_{x_j}$. 
Let $z=(z_1,\ldots,z_n)$, $z_j=x_{2j-1}+ix_{2j}$, $j=1,\ldots,n$, be coordinates of $\Complex^n$.  
We write $z^\alpha=z_1^{\alpha_1}\cdots z^{\alpha_n}_n$, $\ol z^\alpha=\ol z_1^{\alpha_1}\cdots\ol z^{\alpha_n}_n$,
$\frac{\pr^{\abs{\alpha}}}{\pr z^\alpha}=\pr^\alpha_z=\pr^{\alpha_1}_{z_1}\cdots\pr^{\alpha_n}_{z_n}$, $\pr_{z_j}=
\frac{\pr}{\pr z_j}=\frac{1}{2}(\frac{\pr}{\pr x_{2j-1}}-i\frac{\pr}{\pr x_{2j}})$, $j=1,\ldots,n$. 
$\frac{\pr^{\abs{\alpha}}}{\pr\ol z^\alpha}=\pr^\alpha_{\ol z}=\pr^{\alpha_1}_{\ol z_1}\cdots\pr^{\alpha_n}_{\ol z_n}$, $\pr_{\ol z_j}=
\frac{\pr}{\pr\ol z_j}=\frac{1}{2}(\frac{\pr}{\pr x_{2j-1}}+i\frac{\pr}{\pr x_{2j}})$, $j=1,\ldots,n$.
For $j, s\in\mathbb Z$, set $\delta_{j,s}=1$ if $j=s$, $\delta_{j,s}=0$ if $j\neq s$. 

Let $M$ be a $C^\infty$ paracompact manifold. 
We let $TM$ and $T^*M$ denote the tangent bundle of $M$ and the cotangent bundle of $M$ respectively.
The complexified tangent bundle of $M$ and the complexified cotangent bundle of $M$ will be denoted by $\Complex TM$ and $\Complex T^*M$ respectively. We write $\langle\,\cdot\,,\cdot\,\rangle$ to denote the pointwise duality between $TM$ and $T^*M$.
We extend $\langle\,\cdot\,,\cdot\,\rangle$ bilinearly to $\Complex TM\times\Complex T^*M$.
Let $E$ be a $C^\infty$ vector bundle over $M$. The fiber of $E$ at $x\in M$ will be denoted by $E_x$.
Let $F$ be another vector bundle over $M$. We write 
$E\boxtimes F$ to denote the vector bundle over $M\times M$ with fiber over $(x, y)\in M\times M$ 
consisting of the linear maps from $E_x$ to $F_y$.  Let $Y\subset M$ be an open set. From now on, the spaces of
smooth sections of $E$ over $Y$ and distribution sections of $E$ over $Y$ will be denoted by $C^\infty(Y, E)$ and $\mathscr D'(Y, E)$ respectively.
Let $\mathscr E'(Y, E)$ be the subspace of $\mathscr D'(Y, E)$ whose elements have compact support in $Y$.
For $m\in\Real$, we let $H^m(Y, E)$ denote the Sobolev space
of order $m$ of sections of $E$ over $Y$. Put
\begin{gather*}
H^m_{\rm loc\,}(Y, E)=\big\{u\in\mathscr D'(Y, E);\, \varphi u\in H^m(Y, E),
      \, \forall\varphi\in C^\infty_0(Y)\big\}\,,\\
      H^m_{\rm comp\,}(Y, E)=H^m_{\rm loc}(Y, E)\cap\mathscr E'(Y, E)\,.
\end{gather*} 

Let $E$ and $F$ be $C^\infty$ vector
bundles over a paracompact $C^\infty$ manifold $M$ equipped with a smooth density of integration. If
$A: C^\infty_0(M,E)\To \mathscr D'(M,F)$
is continuous, we write $K_A(x, y)$ or $A(x, y)$ to denote the distribution kernel of $A$.
The following two statements are equivalent
\begin{enumerate}
\item $A$ is continuous: $\mathscr E'(M,E)\To C^\infty(M,F)$,
\item $K_A\in C^\infty(M\times M,E_y\boxtimes F_x)$.
\end{enumerate}
If $A$ satisfies (a) or (b), we say that $A$ is smoothing. Let
$B: C^\infty_0(M,E)\to \mathscr D'(M,F)$ be a continuous operator. 
We write $A\equiv B$ (on $M$) if $A-B$ is a smoothing operator. We say that $A$ is properly supported if ${\rm Supp\,}K_A\subset M\times M$ is proper. That is, the two projections: $t_x:(x,y)\in{\rm Supp\,}K_A\To x\in M$, $t_y:(x,y)\in{\rm Supp\,}K_A\To y\in M$ are proper (i.e. the inverse images of $t_x$ and $t_y$ of all compact subsets of $M$ are compact). 

Let $H(x,y)\in\mathscr D'(M\times M,E_y\boxtimes F_x)$. We write $H$ to denote the unique continuous operator $C^\infty_0(M,E)\To\mathscr D'(M,F)$ with distribution kernel $H(x,y)$. In this work, we identify $H$ with $H(x,y)$. 

We recall H\"ormander symbol spaces 

\begin{defn}\label{d-gue140329}
Let $M\subset\Real^N$ be an open set, $0\leq\rho\leq1$, $0\leq\delta\leq1$, $m\in\Real$, $N_1\in\mathbb N$. $S^m_{\rho,\delta}(M\times\Real^{N_1})$ is the space of all $a\in C^\infty(M\times\Real^{N_1})$ such that for all compact $K\Subset M$ and all $\alpha\in\mathbb N^N_0$, $\beta\in\mathbb N^{N_1}_0$, there is a constant $C>0$ such that 
\[\abs{\pr^\alpha_x\pr^\beta_\theta a(z,\theta)}\leq C(1+\abs{\theta})^{m-\rho\abs{\beta}+\delta\abs{\alpha}},\ \ (x,\theta)\in K\times\Real^{N_1}.\]
We say that $S^m_{\rho,\delta}$ is the space of symbols of order $m$ type $(\rho,\delta)$. Put 
\[S^{-\infty}(M\times\Real^{N_1}):=\bigcap_{m\in\Real}S^m_{\rho,\delta}(M\times\Real^{N_1}).\]
Let $a_j\in S^{m_j}_{\rho,\delta}(M\times\Real^{N_1})$, $j=0,1,2,\ldots$ with $m_j\To-\infty$, $j\To\infty$. Then there exists $a\in S^{m_0}_{\rho,\delta}(M\times\Real^{N_1})$ unique modulo $S^{-\infty}(M\times\Real^{N_1})$, such that $a-\sum^{k-1}_{j=0}a_j\in S^{m_k}_{\rho,\delta}(M\times\Real^{N_1})$ for $k=0,1,2,\ldots$. 

If $a$ and $a_j$ have the properties above, we write $a\sim\sum^{\infty}_{j=0}a_j$ in $S^{m_0}_{\rho,\delta}(M\times\Real^{N_1})$. 

Let $S^m_{{\rm cl\,}}(M\times\Real^{N_1})$ be the space of all symbols $a(x,\theta)\in S^m_{1,0}(M\times\Real^{N_1})$ with 
\[\mbox{$a(x,\theta)\sim\sum^\infty_{j=0}a_{m-j}(x,\theta)$ in $S^m_{1,0}(M\times\Real^{N_1})$},\]
with $a_k(x,\theta)\in C^\infty(M\times\Real^{N_1})$ positively homogeneous of degree $k$ in $\theta$, that is, $a_k(x,\lambda\theta)=\lambda^ka_k(x,\theta)$, $\lambda\geq1$, $\abs{\theta}\geq1$. 

By using partition of unity, we extend the definitions above to the cases when $M$ is a smooth paracompact manifold and when 
we replace $M\times\Real^{N_1}$ by $T^*M$.
\end{defn}

Let $\Omega\subset X$ be an open set. Let
$a(x, \xi)\in S^k_{\frac{1}{2},\frac{1}{2}}(T^*\Omega)$. We can define
\[A(x, y)=\frac{1}{(2\pi)^{3}}\int\! e^{i<x-y,\xi>}a(x,\xi)d\xi\]
as an oscillatory integral and we can show that
\[A:C^\infty_0(\Omega)\To C^\infty(\Omega)\]
is continuous and has unique continuous extension:
\[A:\mathscr E'(\Omega)\To\mathscr D'(\Omega).\]

\begin{defn} \label{d:ss-pseudo}
Let $k\in\Real$. A pseudodifferential operator of order $k$ type
$(\frac{1}{2},\frac{1}{2})$ is a continuous linear map
$A:C^\infty_0(\Omega)\To\mathscr D'(\Omega)$
such that the distribution kernel of $A$ is
\[A(x, y)=\frac{1}{(2\pi)^3}\int\! e^{i<x-y,\xi>}a(x, \xi)d\xi\]
with $a\in S^k_{\frac{1}{2},\frac{1}{2}}(T^*\Omega)$. We call $a(x, \xi)$ the symbol of $A$. We shall write
$L^k_{\frac{1}{2},\frac{1}{2}}(\Omega)$
to denote the space of pseudodifferential operators of order $k$ type $(\frac{1}{2},\frac{1}{2})$.
\end{defn}

We recall the following classical result of Calderon-Vaillancourt~(see chapter~{\rm XVIII} of H\"{o}rmander~\cite{Hor85}).

\begin{prop} \label{p:he-calderon}
If $A\in L^k_{\frac{1}{2},\frac{1}{2}}(\Omega)$.
Then,
\[A:H^s_{\rm comp}(\Omega)\To H^{s-k}_{\rm loc}(\Omega)\]
is continuous, for all $s\in\Real$. Moreover,
if $A$ is properly supported, then
\[A:H^s_{\rm loc}(\Omega)\To H^{s-k}_{\rm loc}(\Omega)\]
is continuous, for all $s\in\Real$.
\end{prop}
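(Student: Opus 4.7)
This is the classical Calder\'on--Vaillancourt theorem, so my plan is the standard one: reduce everything to the $L^2$-boundedness of a single operator in $L^0_{1/2,1/2}$, prove that core estimate by Cotlar--Stein almost-orthogonality, and then recover the Sobolev and properly supported statements by order reduction and locality.

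Since the assertion is local, I would first fix cutoffs $\chi,\psi\in C^\infty_0(\Omega)$ and reduce to studying $\psi A\chi$, whose full symbol still lies in $S^k_{1/2,1/2}$ with compact $x$-support. Embedding $\Omega$ in $\Real^3$, the Bessel-potential operators $\Lambda^s u(x) = (2\pi)^{-3}\int e^{i\langle x,\xi\rangle}(1+|\xi|^2)^{s/2}\hat u(\xi)\,d\xi$ are pseudodifferential of type $(1,0)\subset(\tfrac12,\tfrac12)$ and induce isomorphisms $H^s\to H^{s-s'}$. The composition calculus for type-$(\tfrac12,\tfrac12)$ symbols (which still closes because $\tfrac12<1$) gives $\Lambda^{s-k}\psi A\chi\Lambda^{-s}\in L^0_{1/2,1/2}$, so it suffices to prove $L^2(\Real^3)$-boundedness of an arbitrary operator whose symbol lies in $S^0_{1/2,1/2}$.

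For this $L^2$-bound I would apply Cotlar--Stein. Take a Littlewood--Paley partition $1 = \sum_{j\geq 0}\varphi_j(\xi)$ with $\varphi_j$ supported in $\{|\xi|\sim 2^j\}$, write $a = \sum_j a_j$ with $a_j := a\varphi_j$, and let $A_j$ be the corresponding operators. The heart of the matter is to verify
\[\norm{A_jA_k^*}_{L^2\to L^2}+\norm{A_j^*A_k}_{L^2\to L^2}\leq C_N\,2^{-N|j-k|},\ \ \forall N\in\mathbb N.\]
The key point is that in type $(\tfrac12,\tfrac12)$ each $\pr_\xi$ gains a factor $2^{-j/2}$ while each $\pr_x$ costs $2^{j/2}$; integrating by parts in the kernels of $A_jA_k^*$ and $A_j^*A_k$, these two effects just barely balance to produce the desired decay. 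Cotlar--Stein then yields that $\sum_j A_j$ is bounded on $L^2$.

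Finally, if $A$ is properly supported then for any cutoff $\chi\in C^\infty_0(\Omega)$ properness furnishes $\tilde\chi\in C^\infty_0(\Omega)$ with $\chi Au = \chi A(\tilde\chi u)$ for all $u\in H^s_{\rm loc}(\Omega)$, after which the compact-to-local statement already proved gives the $H^s_{\rm loc}\to H^{s-k}_{\rm loc}$ continuity. The main obstacle is the almost-orthogonality estimate above: $\rho=\delta=\tfrac12$ is exactly the critical exponent where the naive $(1,0)$-symbol calculus breaks down, so one has to track $x$- and $\xi$-derivatives with care to see that the gain and the loss cancel. Once that estimate is in hand, everything else is symbol calculus and bookkeeping.
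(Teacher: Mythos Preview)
The paper does not give a proof of this proposition at all: it is recorded as the classical Calder\'on--Vaillancourt theorem with a reference to Chapter~XVIII of H\"ormander's treatise, and the argument is simply omitted. Your sketch is therefore not competing with anything in the paper; you are supplying the standard proof that the paper only cites.

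One technical caveat on your outline. A pure Littlewood--Paley decomposition in $\xi$ alone is slightly too coarse for the Cotlar--Stein step in the critical case $\rho=\delta=\tfrac12$. While $A_jA_k^*=0$ for $|j-k|\geq 2$ by disjoint frequency supports, the bound $\norm{A_j^*A_k}\leq C_N\,2^{-N|j-k|}$ is not as immediate as you suggest, because the $x$-dependence of the symbol means $A_k$ does not localize its \emph{output} in frequency, so integrating by parts in the kernel does not straightforwardly produce decay in $|j-k|$. The textbook arguments (H\"ormander, Theorem~18.1.11, or the original Calder\'on--Vaillancourt paper) either reduce first to $S^0_{0,0}$ by a parabolic rescaling and then decompose into unit cubes in \emph{both} $x$ and $\xi$, or equivalently use phase-space boxes of side $\sim|\xi|^{1/2}$ directly. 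You clearly recognize that $\rho=\delta=\tfrac12$ is the borderline case requiring care, so this is a refinement of your plan rather than a different strategy; but the almost-orthogonality estimate as you have written it needs that finer decomposition to go through cleanly.
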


\section{Microlocal analysis for $\Box_b$}\label{s-gue140325II}

We will reduce the analysis of the Paneitz operator to the analysis of Kohn Laplacian. We extend $\ddbar_b$ to $L^2$ space by $\ddbar_b:{\rm Dom\,}\ddbar_b\subset L^2(X)\To L^2_{(0,1)}(X)$, where 
${\rm Dom\,}\ddbar_b:=\set{u\in L^2(X);\, \ddbar_bu\in L^2_{(0,1)}(X)}$. Let 
\[\ol{\pr}^*_b:{\rm Dom\,}\ol{\pr}^*_b\subset L^2_{(0,1)}(X)\To L^2(X)\] 
be the $L^2$ adjoint of $\ddbar_b$. The Gaffney extension of Kohn Laplacian is given by 
\begin{equation}\label{e-gue140325aIII}
\begin{split}
&\Box_b=\ol{\pr}^*_b\ddbar_b:{\rm Dom\,}\Box_b\subset L^2(X)\To L^2(X),\\
&{\rm Dom\,}\Box_b:=\set{u\in L^2(X);\, u\in{\rm Dom\,}\ddbar_b, \ddbar_bu\in{\rm Dom\,}\ol{\pr}^*_b}.
\end{split}
\end{equation}
It is well-known that $\Box_b$ is a positive self-adjoint operator. Moreover, the characteristic manifold of $\Box_b$ is given by 
\begin{equation}\label{e-gue140329}
\Sigma=\set{(x,\xi)\in T^*X;\, \xi=\lambda\theta(x),\ \lambda\neq0}.
\end{equation}

Since $X$ is embeddable, $\Box_b$ has $L^2$ closed range. Let $G:L^2(X)\To{\rm Dom\,}\Box_b$ be the partial inverse and let $S:L^2(X)\To{\rm Ker\,}\Box_b$ be the orthogonal projection (Szeg\"o projection). Then, 
\begin{equation}\label{e-gue140325aIV}
\begin{split}
&\mbox{$\Box_bG+S=I$ on $L^2(X)$},\\
&\mbox{$G\Box_b+S=I$ on ${\rm Dom\,}\Box_b$}.
\end{split}
\end{equation}
In~\cite{Hsiao08}, we proved that $G\in L^{-1}_{\frac{1}{2},\frac{1}{2}}(X)$, $S\in L^{0}_{\frac{1}{2},\frac{1}{2}}(X)$ and we got explicit formulas of the kernels $G(x,y)$ and $S(x,y)$. 

We introduce some notations. 
Let $M$ be an open set in $\Real^N$ and let $f$, $g\in C^\infty(M)$. We write $f\asymp g$
if for every compact set $K\subset M$ there is a constant $c_K>0$ such that
$f\leq c_Kg$ and $g\leq c_Kf$ on $K$. Let $\Omega\subset X$ be an open set with real local coordinates $x=(x_1,x_2,x_3)$. We need 

\begin{defn} \label{d-dhikbmiI}
$a(t,x,\eta)\in C^\infty(\ol\Real_+\times T^*\Omega)$
is quasi-homogeneous of
degree $j$ if $a(t,x,\lambda\eta)=\lambda^ja(\lambda t,x,\eta)$ for all $\lambda>0$.
\end{defn}

We introduce some symbol classes 

\begin{defn} \label{d-d-msmapamo}
Let $\mu>0$. We say that $a(t,x,\eta)\in\Td S^m_{\mu}(\ol\Real_+\times T^*\Omega)$
if $a(t,x,\eta)\in C^\infty(\ol\Real_+\times T^*\Omega)$ and there is a $a(x,\eta)\in S^m_{1,0}(T^*\Omega)$ such that for all indices $\alpha, \beta\in\mathbb N^{3}_0$, $\gamma\in\mathbb N_0$, every compact set $K\Subset\Omega$, there exists a constant $c_{\alpha,\beta,\gamma}>0$ independent of $t$ such that for all $t\in\ol\Real_+$, 
\[\abs{\pr^\gamma_t\pr^\alpha_{x}\pr^\beta_{\eta}(a(t,x,\eta)-a(x,\eta))}\leq
 c_{\alpha,\beta,\gamma}e^{-t\mu\abs{\eta}}(1+\abs{\eta})^{m+\gamma-\abs{\beta}},\; x\in K, \abs{\eta}\geq1.\] 
 


\end{defn}

The following is well-known (see~\cite{Hsiao08})

\begin{thm}\label{t-gue140325II}
With the assumptions and notations above, $G\in L^{-1}_{\frac{1}{2},\frac{1}{2}}(X)$, $S\in L^{0}_{\frac{1}{2},\frac{1}{2}}(X)$, $S(x,y)\equiv\int e^{i\varphi(x,y)t}a(x,y,t)dt$, where $\varphi(x,y)\in C^\infty(X\times X)$ is as in \eqref{e-gue140325VIII} and \[\begin{split}
&a(x, y, t)\in S^{1}_{{\rm cl\,}}(X\times X\times]0, \infty[), \\
&\mbox{$a(x, y, t)\sim\sum^\infty_{j=0}a_j(x, y)t^{1-j}$ in $S^1_{1,0}(X\times X\times]0,\infty[)$},\\
&a_j(x,y)\in C^\infty(X\times X),\ \ j=0,1,\ldots,\\
&a_0(x,x)=\frac{1}{2}\pi^{-n},\ \ \forall x\in X,
\end{split}\]
and on every open local coordinate patch $\Omega\subset X$ with real local coordinates $x=(x_1,x_2,x_3)$, we have  
\begin{equation}\label{e-gue140329f}
G(x,y)\equiv \int\int^{\infty}_0e^{i(\psi(t,x,\eta)-<y,\eta>)}-t\bigr(i\psi'_t(t,x,\eta)a(t,x,\eta)+\frac{\pr a}{\pr t}(t,x,\eta)\bigr)dt d\eta,
\end{equation}
where $a(t,x,\eta)\in\Td S^0_{\mu}(\ol\Real_+\times T^*\Omega)$,  $\psi(t,x,\eta)\in\Td S^1_{\mu}(\ol\Real_+\times T^*\Omega)$ for some $\mu>0$, $\psi(t,x,\eta)$ is quasi-homogeneous of degree $1$, $\psi(0,x,\eta)=<x,\eta>$, ${\rm Im\,}\psi\geq 0$ with equality precisely on $(\set{0}\times T^*\Omega\setminus0)
\bigcup(\Real_+\times\Sigma)$, 
\[\psi(t,x,\eta)=<x,\eta>\text{ on }\Sigma,\; d_{x,\eta}(\psi-<x,\eta>)=0\text{ on }\Sigma,\]
and
\begin{equation} \label{e:c-sj1}
{\rm Im\,}\psi(t, x,\eta)\asymp\Big(\abs{\eta}\frac{t\abs{\eta}}{1+t\abs{\eta}}\Big)\Big({\rm dist\,}
\big((x, \frac{\eta}{\abs{\eta}}),\Sigma\big)\Big)^2,\ \ t\geq0,\ \abs{\eta}\geq1.
\end{equation}
(See Theorem~\ref{t-gue140331} below  for the meaning of the integral \eqref{e-gue140329f}.)
\end{thm}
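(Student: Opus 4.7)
The plan is to construct $G$ and $S$ simultaneously via a complex-phase parametrix for the heat semigroup $e^{-t\Box_b}$, in the spirit of the Boutet de Monvel-Sj\"ostrand construction of the Bergman and Szeg\"o kernels on strongly pseudoconvex boundaries. Formally, $\int_0^\infty e^{-t\Box_b}dt = G$ on $({\rm Ker\,}\Box_b)^\perp$; so a sufficiently precise approximate heat kernel, once integrated in $t$, should yield both the Green operator $G$ (from the part of the kernel that decays as $t\to\infty$) and the Szeg\"o projection $S$ (from the part that persists along the characteristic set $\Sigma$, on which $\Box_b$ degenerates).

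First, since $\Box_b$ is elliptic off $\Sigma$ and subelliptic of order $1/2$ at $\Sigma$ by strong pseudoconvexity, I would use a microlocal cutoff to separate a standard elliptic parametrix on $T^*X\setminus\Sigma$ from the delicate construction in a conic neighborhood of $\Sigma$. In that neighborhood I would try the WKB ansatz
\[
\frac{1}{(2\pi)^3}\int e^{i(\psi(t,x,\eta)-\langle y,\eta\rangle)}\,a(t,x,\eta)\,d\eta
\]
for $e^{-t\Box_b}(x,y)$, with $\psi(0,x,\eta)=\langle x,\eta\rangle$ and ${\rm Im\,}\psi\geq0$. Plugging into $(\pr_t+\Box_b)$ and equating terms of each order of homogeneity produces a complex eikonal equation for $\psi$ and a sequence of transport equations for the asymptotic expansion of $a$. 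Using Melin-Sj\"ostrand almost-analytic extensions I would solve the eikonal equation in $\Td S^1_\mu$ and verify the quantitative bound \eqref{e:c-sj1}; this bound encodes both the subellipticity of $\Box_b$ and the positivity of the Levi form and pins down the zero set of ${\rm Im\,}\psi$ as $(\{0\}\times T^*\Omega\setminus0)\cup(\Real_+\times\Sigma)$.

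Next I would extract $G$ and $S$ from the $t$-integrated kernel. A Duhamel identity together with integration by parts in $t$ transforms the formal expression $\int_0^\infty e^{-t\Box_b}dt$ into the stated formula \eqref{e-gue140329f}, which makes sense as an oscillatory integral precisely because of the decay of ${\rm Im\,}\psi$ in $t$ off $\Sigma$. The ``boundary'' contribution at $t=\infty$, where $\psi(t,x,\eta)-\langle y,\eta\rangle$ becomes stationary in $\eta$ exactly on $\Sigma$, is identified with $S$ by applying complex stationary phase in $\eta$; the critical value of this stationary phase reduction is precisely the Szeg\"o phase $\varphi(x,y)$ appearing in \eqref{e-gue140325VIII}. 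Tracking the amplitude through the reduction gives $a(x,y,t)\in S^1_{{\rm cl\,}}$ with the claimed expansion, and the normalization $a_0(x,x)=\frac{1}{2}\pi^{-n}$ follows from an explicit model computation in the Heisenberg-type normal coordinates of Theorem \ref{t-gue140305I}, matching the classical Szeg\"o kernel of the Heisenberg group.

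The hard part will be the global solvability of the complex eikonal equation: one must extend classical Hamilton-Jacobi theory to a complex-valued phase via almost-analytic continuation, prove existence for all $t\geq0$, and obtain the uniform two-sided estimate \eqref{e:c-sj1} on ${\rm Im\,}\psi$. A secondary technical point is verifying that the locally constructed phase is equivalent, in the sense of Definition \ref{d-gue140305}, to any other choice produced on overlapping coordinate patches, which ultimately reduces to Theorem \ref{t-gue140305II}. Once these are in hand, the symbol class memberships $G\in L^{-1}_{1/2,1/2}(X)$ and $S\in L^0_{1/2,1/2}(X)$ follow from bookkeeping on the derivatives of $\psi$ and $a$, and Proposition \ref{p:he-calderon} supplies the desired Sobolev continuity.
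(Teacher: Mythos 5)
Your proposal takes essentially the same route as the paper's proof: the heat-equation method for $\Box_b$ with a complex-phase WKB ansatz $\frac{1}{(2\pi)^3}\int e^{i(\psi(t,x,\eta)-\langle y,\eta\rangle)}\alpha(t,x,\eta)\,d\eta$, leading to a complex eikonal equation for $\psi$ solved via Melin--Sj\"ostrand almost-analytic machinery, transport equations for $\alpha$, and the separation of $G$ (the part that decays in $t$) from $S$ (the $t\to\infty$ contribution concentrated on $\Sigma$) after integrating in $t$. The remaining items you defer --- complex stationary phase in $\eta$ to reduce $\psi(\infty,\cdot,\cdot)$ to the Szeg\"o phase $\varphi$, the Heisenberg model computation for $a_0(x,x)$, and the symbol-class bookkeeping that gives $G\in L^{-1}_{1/2,1/2}$ and $S\in L^0_{1/2,1/2}$ --- are precisely the ingredients in~\cite{Hsiao08}, which the paper invokes.

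There is one step you leave unstated that is not automatic. The heat-equation construction yields operators $\Td G$ and $\Td S$ which only satisfy the parametrix relations $\Td S+\Box_b\Td G\equiv I$ and $\Box_b\Td S\equiv0$ modulo smoothing; you still have to show that these agree (again modulo smoothing operators) with the honest $L^2$ partial inverse $G$ and Szeg\"o projection $S$ of the Gaffney extension \eqref{e-gue140325aIII}. That functional-analytic bridging requires knowing that $\Box_b$ has $L^2$ closed range, which is exactly where the standing hypothesis that $X$ is CR-embeddable enters; without closed range the conclusion $\Td G\equiv G$, $\Td S\equiv S$ would fail. You should add this identification step explicitly and record where embeddability is used, since the rest of your plan never touches that hypothesis.
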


\begin{proof}
We only sketch the proof. For all the details, we refer the reader to Part I in \cite{Hsiao08}.
We use the heat equation method. We work with some
real local coordinates $x=(x_1,x_2,x_3)$ defined on $\Omega$. 
We consider the problem
\begin{equation} \label{e:i-heat}
\left\{ \begin{array}{ll}
(\pr_t+\Box_b)u(t,x)=0  & \text{ in }\Real_+\times\Omega,  \\
u(0,x)=v(x). \end{array}\right.
\end{equation}
We look for an approximate solution of
\eqref{e:i-heat} of the form $u(t,x)=A(t)v(x)$,
\begin{equation} \label{e:i-fourierheat}
A(t)v(x)=\frac{1}{(2\pi)^{3}}\int\!e^{i(\psi(t,x,\eta)-\langle y,\eta\rangle)}\alpha(t,x,\eta)v(y)dyd\eta
\end{equation}
where formally
\[\alpha(t,x,\eta)\sim\sum^\infty_{j=0}\alpha_j(t,x,\eta),\]
with $\alpha_j(t,x,\eta)$ quasi-homogeneous of degree $-j$.

The full symbol of $\Box_b$ equals $\sum^2_{j=0}p_j(x,\xi)$,
where $p_j(x,\xi)$ is positively homogeneous of order $2-j$ in the sense that
\[p_j(x, \lambda\eta)=\lambda^{2-j}p_j(x, \eta),\ \abs{\eta}\geq1,\ \lambda\geq1.\]
We apply $\pr_t+\Box_b$ formally
inside the integral in \eqref{e:i-fourierheat} and then introduce the asymptotic expansion of
$\Box_b(\alpha e^{i\psi})$. Set $(\pr_t+\Box_b)(\alpha e^{i\psi})\sim 0$ and regroup
the terms according to the degree of quasi-homogeneity. The phase $\psi(t, x, \eta)$ should solve
\begin{equation} \label{e:i-chara}
\left\{ \begin{array}{ll}
\frac{\displaystyle\pr\psi}{\displaystyle\pr t}-ip_0(x,\psi'_x)=O(\abs{{\rm Im\,}\psi}^N), & \forall N\geq 0,   \\
 \psi|_{t=0}=\langle x, \eta\rangle. \end{array}\right.
\end{equation}
This equation can be solved with ${\rm Im\,}\psi(t, x,\eta)\geq0$ and the phase $\psi(t, x, \eta)$ is quasi-homogeneous of
degree $1$. Moreover,
\begin{gather*}
\psi(t,x,\eta)=\langle x,\eta\rangle{\;\rm on\;}\Sigma,\; d_{x,\eta}(\psi-\langle x,\eta\rangle)=0 \text{ on }\Sigma,\\
{\rm Im\,}\psi(t, x,\eta)\asymp\Big(\abs{\eta}\frac{t\abs{\eta}}{1+t\abs{\eta}}\Big)
\Big({\rm dist\,}\big((x,\frac{\eta}{\abs{\eta}}),\Sigma\big)\Big)^2,\ \ \abs{\eta}\geq 1.
\end{gather*}
Furthermore, there exists $\psi(\infty,x,\eta)\in C^\infty(\Omega\times\dot\Real^3)$ with a
uniquely determined Taylor expansion at each point of $\Sigma$ such that for every compact set
$K\subset\Omega\times\dot\Real^{3}$ there is a constant $c_K>0$ such that
\[{\rm Im\,}\psi(\infty,x,\eta)\geq c_K\abs{\eta}\Big({\rm dist\,}\big((x,\frac{\eta}{\abs{\eta}}),\Sigma\big)\Big)^2,
\ \ \abs{\eta}\geq 1.\]
If $\lambda\in C(T^*\Omega\smallsetminus 0)$, $\lambda>0$ is positively homogeneous of degree $1$ and 
$\lambda|_\Sigma<\min\lambda_j$, $\lambda_j>0$, where
$\pm i\lambda_j$ are the non-vanishing eigenvalues of the fundamental matrix of $\Box_b$,
then the solution $\psi(t,x,\eta)$ of \eqref{e:i-chara} can be chosen so that for every
compact set $K\subset\Omega\times\dot\Real^{3}$ and all indices $\alpha$, $\beta$, $\gamma$,
there is a constant $c_{\alpha,\beta,\gamma,K}$ such that
\[ \abs{\pr^\alpha_x\pr^\beta_\eta\pr^\gamma_t(\psi(t,x,\eta)-\psi(\infty,x,\eta))}
\leq c_{\alpha,\beta,\gamma,K}e^{-\lambda(x,\eta)t}\text{ on }\ol\Real_+\times K.\]
We obtain the transport equations
\begin{equation} \label{e:i-heattransport}
\left\{ \begin{array}{l}
 T(t,x,\eta,\pr_t,\pr_x)\alpha_0=O(\abs{{\rm Im\,}\psi}^N), \ \forall N,   \\
 T(t,x,\eta,\pr_t,\pr_x)\alpha_j+l_j(t,x,\eta,\alpha_0,\ldots,\alpha_{j-1})= O(\abs{{\rm Im\,}\psi}^N), \ \forall N,\ \ j\in\mathbb N.
 \end{array}\right.
\end{equation}

It was proved in~\cite{Hsiao08} that \eqref{e:i-heattransport} can be solved. Moreover, there exist positively homogeneous functions of degree $-j$
\[\alpha_j(\infty, x, \eta)\in C^\infty(T^*\Omega),\ \ j=0,1,2,\ldots,\]
such that $\alpha_j(t, x, \eta)$ converges exponentially
fast to $\alpha_j(\infty, x, \eta)$, $t\to\infty$, for all $j\in\N_0$.
Set
\[\Td G=\frac{1}{(2\pi)^{3}}\int\int^{\infty}_0e^{i(\psi(t,x,\eta)-<y,\eta>)}-t\bigr(i\psi'_t(t,x,\eta)\alpha(t,x,\eta)+\frac{\pr \alpha}{\pr t}(t,x,\eta)\bigr)dt d\eta\]
and
\[\Td S=\frac{1}{(2\pi)^{3}}\int e^{i(\psi(\infty,x,\eta)-\langle y,\eta\rangle)}\alpha(\infty,x,\eta)d\eta.\]
We can show that $\Td G$ is a pseudodifferential operator of order $-1$ type $(\frac{1}{2},\frac{1}{2})$,  $\Td S$ is a pseudodifferential operator of order $0$ type $(\frac{1}{2},\frac{1}{2})$ satisfying
\[\Td S+\Box_b\Td G\equiv I,\ \ \Box_b\Td S\equiv0.\]
Moreover, from global theory of complex Fourier integral operators, we can show that 
$\Td S\equiv\int e^{i\varphi(x,y)t}a(x,y,t)dt$. Furthermore, by using some standard argument in functional analysis, we can show that $\Td G\equiv G$, $\Td S\equiv S$. 
\end{proof}

Until further notice, we work in an open local coordinate patch $\Omega\subset X$ with real local coordinates $x=(x_1,x_2,x_3)$. The following is well-known (see Chapter 5 in~\cite{Hsiao08})

\begin{thm}\label{t-gue140331}
With the notations and assumptions used in Theorem~\ref{t-gue140325II}, let $\chi\in C^\infty_0(\Real^3)$ be equal to $1$ near the origin. Put 
\[\begin{split}
&G_\varepsilon(x,y)\\
&=\int\int^{\infty}_0e^{i(\psi(t,x,\eta)-<y,\eta>)}-t\bigr(i\psi'_t(t,x,\eta)a(t,x,\eta)+\frac{\pr a}{\pr t}(t,x,\eta)\bigr)\chi(\varepsilon\eta)dt d\eta,\end{split}\]
where $\psi(t,x,\eta)$, $a(t,x,\eta)$ are as in \eqref{e-gue140329f}. For $u\in C^\infty_0(\Omega)$, we can show that 
\[Gu:=\lim_{\varepsilon\To0}\int G_\varepsilon(x,y)u(y)dy\in C^\infty(\Omega)\]
and 
\[\begin{split}
G:C^\infty_0(\Omega)&\To C^\infty(\Omega),\\
u&\To\lim_{\varepsilon\To0}\int G_\varepsilon(x,y)u(y)dy
\end{split}\]
is continuous. 

Moreover, $G\in L^{-1}_{\frac{1}{2},\frac{1}{2}}(\Omega)$ with symbol 
\[\int^{\infty}_0e^{i(\psi(t,x,\eta)-<x,\eta>)}-t\bigr(i\psi'_t(t,x,\eta)a(t,x,\eta)+\frac{\pr a}{\pr t}(t,x,\eta)\bigr)dt\in S^{-1}_{\frac{1}{2},\frac{1}{2}}(T^*\Omega).\]
\end{thm}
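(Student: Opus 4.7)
The plan is to reduce the claim to a symbol estimate showing that the $t$-integral produces a symbol in $S^{-1}_{1/2,1/2}(T^*\Omega)$, and then identify $G$ with the corresponding pseudodifferential operator via dominated convergence.

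\textbf{Step 1: Symbol estimate.} Set $b(t,x,\eta):=-t\bigl(i\psi'_t(t,x,\eta)a(t,x,\eta)+\partial_t a(t,x,\eta)\bigr)$ and define
\[g(x,\eta):=\int_0^\infty e^{i(\psi(t,x,\eta)-\langle x,\eta\rangle)}\,b(t,x,\eta)\,dt.\]
The goal is $g\in S^{-1}_{1/2,1/2}(T^*\Omega)$. Absolute convergence of the $t$-integral is immediate from Definition~\ref{d-d-msmapamo}: both $\partial_t a$ and $\psi'_t$ carry a factor $e^{-\mu t|\eta|}$ (times polynomial growth in $|\eta|$), and ${\rm Im}\,\psi\ge 0$ gives $|e^{i\psi}|\le 1$. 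The explicit $-t$ factor, combined with the effective restriction $t\lesssim|\eta|^{-1}$ coming from the exponential, supplies the overall order $-1$. For derivative estimates, each $\partial_x$ or $\partial_\eta$ falling on the phase $\psi-\langle x,\eta\rangle$ produces a factor controlled by $|\eta|$ times the distance $d={\rm dist}((x,\eta/|\eta|),\Sigma)$, since $\psi-\langle x,\eta\rangle$ and its first $\eta$-derivatives vanish on $\Sigma$. Using the lower bound \eqref{e:c-sj1} for ${\rm Im}\,\psi$, one trades one power of $d$ for $|\eta|^{-1/2}$, which yields the $(1/2,1/2)$-type estimates.

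\textbf{Step 2: Passing to the limit.} For $u\in C^\infty_0(\Omega)$, interchange the $y$- and $(t,\eta)$-integrations in
\[\int G_\varepsilon(x,y)\,u(y)\,dy=\frac{1}{(2\pi)^3}\int\!\!\int_0^\infty\!\!\int e^{i(\psi(t,x,\eta)-\langle y,\eta\rangle)}\,b(t,x,\eta)\chi(\varepsilon\eta)\,u(y)\,dy\,dt\,d\eta.\]
Where the phase $\psi(t,x,\eta)-\langle y,\eta\rangle$ is non-stationary in $\eta$ (i.e.\ away from $x=y$), repeated integration by parts in $\eta$ gives arbitrary $|\eta|$-decay uniformly in $\varepsilon$; dominated convergence then justifies passing to $\varepsilon\to 0$ and identifies $Gu(x)$ with $\mathrm{Op}(g)u(x)$, which is smooth in $x$. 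The continuity $G\colon C^\infty_0(\Omega)\to C^\infty(\Omega)$ then follows from the standard continuity of $\mathrm{Op}(g)$ on smooth compactly supported functions (cf.\ Proposition~\ref{p:he-calderon}), and $G=\mathrm{Op}(g)\in L^{-1}_{1/2,1/2}(\Omega)$ with the stated symbol holds by definition.

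\textbf{Main obstacle.} Step 1 is the heart of the matter. One must simultaneously track three competing quantities: the exponential decay in $t|\eta|$ from the $\widetilde S^m_\mu$ classes, the polynomial growth coming from derivatives of the phase, and the loss of $|\eta|^{1/2}$ per derivative induced by the distance-to-$\Sigma$ mechanism characteristic of heat-kernel parametrices for operators with a non-trivial characteristic set such as $\Box_b$. The remaining steps, including the identification of $G$ with $\mathrm{Op}(g)$ and the mapping properties, are routine once the symbol estimate is in hand.
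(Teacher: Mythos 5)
The paper does not prove this theorem; it is stated as well-known and attributed to Chapter~5 of \cite{Hsiao08}. So there is no in-paper proof to compare against line by line, but the paper does record the essential tool---Lemma~\ref{l-gue140331}---and the route you outline is structurally the right one.

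There is, however, a genuine gap in Step~1 for the base case $\alpha=\beta=0$. You claim that the factor $-t$, the decay $e^{-t\mu\abs{\eta}}$, and the crude bound $\abs{e^{i\psi}}\le 1$ already yield order $-1$. That fails for the summand $t\,\psi'_t\,a$. Since $\psi\in\Td S^1_\mu$ is quasi-homogeneous of degree one (indeed $\psi'_t=ip_0(x,\psi'_x)+O(\abs{{\rm Im\,}\psi}^N)$ with $p_0$ of order two), $\psi'_t$ is quasi-homogeneous of degree two and grows quadratically: $\abs{\psi'_t}\lesssim(1+\abs{\eta})^2 e^{-t\mu\abs{\eta}}$; this matches the $\Td S^1_\mu$ estimate with $m=1,\gamma=1,\beta=0$. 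After multiplying by $t$ and integrating, $\int_0^\infty t\,e^{-t\mu\abs{\eta}}\,dt\asymp\abs{\eta}^{-2}$ only cancels the quadratic growth and leaves a symbol of order $0$, not $-1$. To reach order $-1$ one must use the oscillatory factor $e^{i(\psi-\langle x,\eta\rangle)}$ nontrivially: $\psi'_t$ vanishes on the characteristic set $\Sigma$ while ${\rm Im\,}\psi$ grows off $\Sigma$ as in \eqref{e:c-sj1}, and these two effects combine, exactly as packaged in \eqref{e-gue140331} with $\alpha=\beta=0$, to give $\abs{e^{i(\psi-\langle x,\eta\rangle)}\,t\,\psi'_t}\lesssim e^{-t\mu\abs{\eta}}$ with no polynomial loss in $\abs{\eta}$; integrating in $t$ then yields the required $\abs{\eta}^{-1}$. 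You do invoke this distance-to-$\Sigma$ mechanism for the derivative estimates, but it is already indispensable in the base case, so the sentence asserting that the $-t$ factor alone supplies the order should be replaced by an appeal to Lemma~\ref{l-gue140331}. Step~2 (removing the cutoff and identifying $G$ with the pseudodifferential operator of symbol $g$) is fine.
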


We need the following (see Lemma 5.13 in~\cite{Hsiao08} for a proof)

\begin{lem}\label{l-gue140331}
With the notations and assumptions used in Theorem~\ref{t-gue140325II}, 
for every compact set $K\subset\Omega$ and all $\alpha\in\mathbb N^3_0$, $\beta\in\mathbb N^3_0$,
there exists a constant $c_{\alpha,\beta,K}>0$ such that
\begin{equation} \label{e-gue140331}\begin{split}
&\abs{\pr^\alpha_x\pr^\beta_\eta(e^{i(\psi(t, x, \eta)-<x, \eta>)}t\psi'_t(t, x, \eta))}  \\
&\quad\leq c_{\alpha,\beta,K}(1+\abs{\eta})^
{\frac{\abs{\alpha}-\abs{\beta}}{2}}e^{-t\mu\abs{\eta}}e^{-{\rm Im\,}\psi(t, x, \eta)}
(1+{\rm Im\,}\psi(t, x,\eta))^{1+\frac{\abs{\alpha}+\abs{\beta}}{2}},
\end{split}\end{equation}
where $x\in K$, $t\in\ol\Real_+$, $\abs{\eta}\geq1$ and $\mu>0$ is a constant independent of $\alpha$, $\beta$ and $K$. 
\end{lem}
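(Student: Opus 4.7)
My plan is to control $\partial^\alpha_x\partial^\beta_\eta(e^{i(\psi-\langle x,\eta\rangle)}\cdot t\psi'_t)$ by Leibniz, treating the exponential factor through Faà di Bruno and exploiting the additional vanishing of $t\psi'_t$ on the characteristic manifold $\Sigma$. The key structural inputs are: $|e^{i(\psi-\langle x,\eta\rangle)}|=e^{-{\rm Im\,}\psi}$; the vanishing of $d_{x,\eta}(\psi-\langle x,\eta\rangle)$ on $\Sigma$; the quadratic estimate ${\rm Im\,}\psi\asymp|\eta|\bigl(t|\eta|/(1+t|\eta|)\bigr)\rho^2$ with $\rho={\rm dist\,}((x,\eta/|\eta|),\Sigma)$; and the vanishing of $\psi'_t=ip_0(x,\psi'_x)+O_\infty({\rm Im\,}\psi)$ on $\Sigma$ (since the principal symbol $p_0$ of $\Box_b$ vanishes to second order there).

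Step 1 is a first-derivative estimate for $\psi-\langle x,\eta\rangle$. Combining vanishing-to-first-order of $d_{x,\eta}(\psi-\langle x,\eta\rangle)$ with the quadratic estimate for ${\rm Im\,}\psi$, a Taylor expansion yields
\[
|\partial_{x_j}(\psi-\langle x,\eta\rangle)|^2\leq C(1+|\eta|)(1+{\rm Im\,}\psi),\quad|\partial_{\eta_j}(\psi-\langle x,\eta\rangle)|^2\leq C(1+|\eta|)^{-1}(1+{\rm Im\,}\psi).
\]
For higher-order derivatives the symbol-class bound $|\partial^a_x\partial^b_\eta(\psi-\langle x,\eta\rangle)|\leq C(1+|\eta|)^{1-|b|}$ is itself dominated by $(1+|\eta|)^{(|a|-|b|)/2}(1+{\rm Im\,}\psi)^{(|a|+|b|)/2}$ whenever $|a|+|b|\geq 2$. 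Multiplying the two kinds of bound factor by factor through the Faà di Bruno expansion of $\partial^{\alpha_1}_x\partial^{\beta_1}_\eta e^{i(\psi-\langle x,\eta\rangle)}$ then gives
\[
|\partial^{\alpha_1}_x\partial^{\beta_1}_\eta e^{i(\psi-\langle x,\eta\rangle)}|\leq Ce^{-{\rm Im\,}\psi}(1+|\eta|)^{(|\alpha_1|-|\beta_1|)/2}(1+{\rm Im\,}\psi)^{(|\alpha_1|+|\beta_1|)/2}.
\]

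Step 2 is the crucial vanishing estimate for $t\psi'_t$. From the symbol-class bound together with $|\psi'_t|\leq C|\eta|^2\rho^2 e^{-t\mu|\eta|}$ (coming from $p_0$ vanishing to second order on $\Sigma$ and $\psi'_x|_\Sigma=\eta$), combined with the ${\rm Im\,}\psi$-asymptotic and the elementary inequality $t(1+t|\eta|)e^{-t\mu|\eta|}\leq C_{\mu'}e^{-t\mu'|\eta|}$, one obtains $|t\psi'_t|\leq Ce^{-t\mu'|\eta|}(1+{\rm Im\,}\psi)$. A parallel induction on $|\alpha_2|+|\beta_2|$, expanding derivatives of $p_0(x,\psi'_x)$ via the chain rule and invoking the first-derivative estimates of Step 1 at each stage, produces
\[
|\partial^{\alpha_2}_x\partial^{\beta_2}_\eta(t\psi'_t)|\leq Ce^{-t\mu'|\eta|}(1+|\eta|)^{-|\beta_2|}(1+{\rm Im\,}\psi)^{1+(|\alpha_2|+|\beta_2|)/2}.
\]

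Finally, Leibniz and summing over all splittings $\alpha=\alpha_1+\alpha_2$, $\beta=\beta_1+\beta_2$: the exponents of $(1+|\eta|)$ add to $(|\alpha|-|\beta|)/2-(|\alpha_2|+|\beta_2|)/2\leq(|\alpha|-|\beta|)/2$, and those of $(1+{\rm Im\,}\psi)$ to exactly $1+(|\alpha|+|\beta|)/2$, yielding the claimed bound (after replacing $\mu'$ by the $\mu$ of the statement). The main technical obstacle is the inductive derivative estimate on $t\psi'_t$: each $x$- or $\eta$-derivative of the composition $p_0(x,\psi'_x)$ must be carefully distributed among derivatives of $p_0$, $\partial p_0$ and $\psi'_x$ so that the vanishing factor $\rho^2$ contributed by $p_0|_\Sigma=0$ persists while each additional derivative costs at most one factor of $(1+{\rm Im\,}\psi)^{1/2}$ balanced with $(1+|\eta|)^{\pm 1/2}$, which is ultimately what makes the final exponents of $(1+{\rm Im\,}\psi)$ and $(1+|\eta|)$ match the statement.
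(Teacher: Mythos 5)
The paper itself does not prove this lemma; it cites Lemma~5.13 of~\cite{Hsiao08} and moves on, so the comparison here is against the strategy one expects in that reference, which your outline does track: decompose via Leibniz, estimate the oscillatory exponential by exploiting the vanishing of $d_{x,\eta}(\psi-\langle x,\eta\rangle)$ on $\Sigma$ together with the quadratic behaviour of ${\rm Im\,}\psi$, and estimate $t\psi'_t$ by exploiting the eikonal equation and the second-order vanishing of $p_0$ on $\Sigma$. This is the right architecture.

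However, the intermediate estimate you assert in Step~2,
$|\partial^{\alpha_2}_x\partial^{\beta_2}_\eta(t\psi'_t)|\leq Ce^{-t\mu'|\eta|}(1+|\eta|)^{-|\beta_2|}(1+{\rm Im\,}\psi)^{1+(|\alpha_2|+|\beta_2|)/2}$,
is false as soon as $|\alpha_2|+|\beta_2|\geq1$. Test it in the model $\psi=\langle x,\eta\rangle+i(1-e^{-t|\eta|})|\eta|q(x,\eta/|\eta|)$ with $q\geq0$ homogeneous of degree $0$ vanishing to exactly second order on $\Sigma$. Then $t\psi'_t=it e^{-t|\eta|}|\eta|^2q$, and for $|\alpha_2|=1$, $\beta_2=0$, one has $|\partial_{x_j}(t\psi'_t)|\asymp t e^{-t|\eta|}|\eta|^2\rho$ where $\rho={\rm dist\,}((x,\eta/|\eta|),\Sigma)$. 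Take $t|\eta|\asymp1$ and $\rho\asymp|\eta|^{-1/2}$, so that ${\rm Im\,}\psi\asymp1$: the left side is $\asymp|\eta|^{1/2}$, while your claimed right side is $O(1)$. The same failure occurs for $\beta_2\neq 0$. The defect is structural: each derivative falling on $q$ removes one power of $\rho$, and a single power of $\rho$ converts to $(1+|\eta|)^{-1/2}(1+{\rm Im\,}\psi)^{1/2}$ (not to $(1+|\eta|)^{-1}(1+{\rm Im\,}\psi)^{1/2}$, and certainly not to a full extra power of $(1+{\rm Im\,}\psi)$). The correct bound is therefore of the form $|\partial^{\alpha_2}_x\partial^{\beta_2}_\eta(t\psi'_t)|\lesssim e^{-t\mu'|\eta|}(1+|\eta|)^{(|\alpha_2|-|\beta_2|)/2}(1+{\rm Im\,}\psi)^{c(\alpha_2,\beta_2)}$ with $c(\alpha_2,\beta_2)\leq 1+(|\alpha_2|+|\beta_2|)/2$; the $(1+|\eta|)$-exponent is larger than you wrote and the $(1+{\rm Im\,}\psi)$-exponent is smaller.

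This is a bookkeeping error rather than a dead end: with the corrected Step~2 exponents, the Leibniz sum still gives $(1+|\eta|)$-exponent $(|\alpha_1|-|\beta_1|)/2+(|\alpha_2|-|\beta_2|)/2=(|\alpha|-|\beta|)/2$ and $(1+{\rm Im\,}\psi)$-exponent at most $1+(|\alpha|+|\beta|)/2$, which is exactly what \eqref{e-gue140331} requires. Two other points merit attention when you write this out in full. First, your Step~1 first-derivative bound needs, in addition to the vanishing of $d_{x,\eta}(\psi-\langle x,\eta\rangle)$ on $\Sigma$, the vanishing of $\psi-\langle x,\eta\rangle$ at $t=0$ (otherwise one only gets $|\partial_{x_j}(\psi-\langle x,\eta\rangle)|\lesssim|\eta|\rho$, which does not compare to ${\rm Im\,}\psi$ when $t|\eta|$ is small); the double vanishing in $t$ and $\rho$ must be established carefully, say via a Hadamard-type division. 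Second, the factorization $|\psi'_t|\lesssim e^{-t\mu|\eta|}|\eta|^2\rho^2$ is not an automatic consequence of taking the minimum of the symbol-class bound and the $\Sigma$-vanishing bound; it requires that $\psi'_t$ inherit the product structure, which is true (as in the model above) but should be argued from the structure of the eikonal solution, not merely from interpolation.
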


In this work, we need 

\begin{thm}\label{t-gue140325III}
Let $L\in C^\infty(X,T^{1,0}X\oplus T^{0,1}X)$. Then, $L\circ G\in L^{-\frac{1}{2}}_{\frac{1}{2},\frac{1}{2}}(X)$. 
\end{thm}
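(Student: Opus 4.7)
The plan is to work in a local coordinate patch $\Omega\subset X$ and use the explicit oscillatory-integral representation of $G$ from Theorem~\ref{t-gue140331}, exploiting the crucial geometric fact that the principal symbol of $L$ vanishes on the characteristic manifold~$\Sigma$. Since $L\in C^\infty(X,T^{1,0}X\oplus T^{0,1}X)$ and the contact form $\theta$ annihilates $T^{1,0}X\oplus T^{0,1}X$, the first-order symbol $\ell(x,\eta)$ of~$L$ satisfies
\[
\ell(x,\eta)\bigr|_{\Sigma}=0, \qquad |\ell(x,\eta)|\leq C\,d(x,\eta)\,|\eta|,
\]
where $d(x,\eta):={\rm dist}((x,\eta/|\eta|),\Sigma)$. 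This vanishing is the entire source of the $1/2$ gain.

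Since $L$ is a differential operator of order one, $L\circ G$ is exactly $\mathrm{Op}(\ell\cdot g + L_x g)$, where $g(x,\eta)\in S^{-1}_{1/2,1/2}(T^*\Omega)$ is the symbol of $G$ given by Theorem~\ref{t-gue140331} and $L_x$ denotes~$L$ acting on the $x$-variable of $g$ with $\eta$ frozen. The term $L_x g$ is handled directly: one $x$-derivative in the class $S^{-1}_{1/2,1/2}$ costs a factor $|\eta|^{1/2}$, so $L_x g \in S^{-1/2}_{1/2,1/2}(T^*\Omega)$ with no further work. Thus the entire content of the theorem is the statement $\ell\cdot g \in S^{-1/2}_{1/2,1/2}(T^*\Omega)$.

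For the main estimate, I would differentiate the integrand of $g$ using Lemma~\ref{l-gue140331}: for every $\alpha,\beta$ one has a bound of the form
\[
\bigl|\pr^\alpha_x\pr^\beta_\eta\bigl(e^{i(\psi-\langle x,\eta\rangle)}\,t\psi'_t\bigr)\bigr|
\leq c_{\alpha,\beta,K}\,(1+|\eta|)^{\frac{|\alpha|-|\beta|}{2}}\,e^{-t\mu|\eta|}\,e^{-{\rm Im}\,\psi}\,(1+{\rm Im}\,\psi)^{1+\frac{|\alpha|+|\beta|}{2}},
\]
together with the quantitative concentration $\mathrm{Im}\,\psi\asymp |\eta|\,\tfrac{t|\eta|}{1+t|\eta|}\,d^2$ from \eqref{e:c-sj1}. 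Combined with $|\ell|\lesssim d\,|\eta|$ and the change of variables $s=t|\eta|$, the key bound reduces to
\[
|\ell\cdot g|\lesssim d\,|\eta|\cdot |\eta|^{-1}\int_0^\infty e^{-\mu s}\,e^{-c|\eta|d^2 s/(1+s)}\,ds,
\]
which I would estimate by splitting into the two regimes $|\eta|d^2\leq 1$ and $|\eta|d^2>1$. In both regimes the integral is dominated by $\min\!\bigl(1,(|\eta|d^2)^{-1}\bigr)$, so that $|\ell\cdot g|\lesssim \min(d,(|\eta|d)^{-1})\lesssim |\eta|^{-1/2}$. This is exactly the $S^0$-bound of order $-1/2$ sought.

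The main obstacle is propagating this basic estimate cleanly through the derivative bounds for arbitrary $(\alpha,\beta)$. For this I would observe that each extra $\pr_\eta$ falling on $\ell$ trades a factor of $d$ for $|\eta|^{-1}$ (since $\ell$ is linear in $\eta$), which is the same $|\eta|^{-1/2}$ saving per $\pr_\eta$ that the type-$(\tfrac12,\tfrac12)$ class demands, while $\pr_\eta$ or $\pr_x$ on the oscillatory integrand of $g$ produces precisely the polynomial factors in $(1+\mathrm{Im}\,\psi)^{1/2}$ and powers of $|\eta|^{1/2}$ tracked by Lemma~\ref{l-gue140331}. Organising these contributions by a Leibniz expansion and repeating the two-regime splitting in~$d$ for each term yields the required estimate
\[
\bigl|\pr^\alpha_x\pr^\beta_\eta(\ell\cdot g)(x,\eta)\bigr|\leq C_{\alpha,\beta,K}\,(1+|\eta|)^{-\frac12+\frac{|\alpha|-|\beta|}{2}},
\]
uniformly for $x$ in compact subsets of $\Omega$. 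This places $\ell\cdot g$ in $S^{-1/2}_{1/2,1/2}(T^*\Omega)$, hence $L\circ G\in L^{-1/2}_{1/2,1/2}(\Omega)$, and patching with a partition of unity gives the global statement $L\circ G\in L^{-1/2}_{1/2,1/2}(X)$.
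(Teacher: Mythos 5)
Your proposal is correct and its core mechanism is the same as the paper's: exploit the vanishing of the $L$-symbol on $\Sigma$ together with the exponential concentration of $G$'s symbol near $\Sigma$ (via Lemma~\ref{l-gue140331} and the quantitative lower bound \eqref{e:c-sj1}), and close the estimate with the two-regime splitting in $|\eta|\,d^2$. The only genuine difference is organizational. You split the symbol of $L\circ G$ as $\ell(x,\eta)\,g(x,\eta)+L_x g(x,\eta)$ and dispose of $L_xg\in S^{-1/2}_{1/2,1/2}$ immediately by the $(\tfrac12,\tfrac12)$-class calculus, so the whole burden falls on the product $\ell(x,\eta)g(x,\eta)$ with $\ell$ evaluated at the ordinary covector $\eta$. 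The paper instead carries $L$ through the oscillatory representation of $G$ before collecting terms, obtaining $\alpha_0+\alpha_1$ with $\alpha_1\in S^{-1}_{1/2,1/2}$ and $\alpha_0$ containing the factor $l(x,\psi'_x(t,x,\eta))$ evaluated at the complex phase gradient under the $t$-integral; the vanishing used is then $l(x,\psi'_x)|_\Sigma=0$ (equivalent to yours because $\psi'_x=\eta$ on $\Sigma$). Your decomposition is slightly cleaner to state, but both lead to the same Leibniz expansion, the same interplay between Lemma~\ref{l-gue140331} and \eqref{e:c-sj1}, and the same case analysis (in the paper: $|\beta''|=0$ versus $|\beta''|\geq1$). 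One point you should make explicit when writing this up: proving $\ell\cdot g\in S^{-1/2}_{1/2,1/2}$ genuinely requires descending into the oscillatory representation of $g$ — the bare membership $g\in S^{-1}_{1/2,1/2}$ does not encode the concentration of $g$ near $\Sigma$ and by itself only yields $\ell\cdot g\in S^{0}_{1/2,1/2}$ — which you do acknowledge but which is the crux.
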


\begin{proof}
We work on an open local coordinate patch $\Omega\subset X$ with real local coordinates $x=(x_1,x_2,x_3)$.
Let $l(x,\eta)\in C^\infty(T^*\Omega)$ be the symbol of $L$. Then, $l(x,\lambda\eta)=\lambda l(x,\eta)$, $\lambda>0$. It is well-known (see Chapter 5 in~\cite{Hsiao08}) that 
\[(LG)(x,y)\equiv\int e^{i<x-y,\eta>}\alpha(x,\eta)d\eta,\]
where 
\[\begin{split}
&\alpha(x,\eta)=\alpha_0(x,\eta)+\alpha_1(x,\eta)\in S^0_{\frac{1}{2},\frac{1}{2}}(T^*\Omega),\\
&\alpha_0(x,\eta)=\int e^{i(\psi(t, x, \eta)-<x, \eta>)}(-1)l(x,\psi'_x(t,x,\eta))t\psi'_t(t, x, \eta)a(t,x,\eta)dt,\\
&\alpha_1(x,\eta)\in S^{-1}_{\frac{1}{2},\frac{1}{2}}(T^*\Omega).
\end{split}\]
Here $a(t,x,\eta)\in \Td S^0_{\mu}(\ol\Real_+\times T^*\Omega)$, $\mu>0$.
We only need to prove that $\alpha_0(x,\eta)\in S^{-\frac{1}{2}}_{\frac{1}{2},\frac{1}{2}}(T^*\Omega)$. Fix $\alpha, \beta\in\mathbb N^3_0$. From \eqref{e-gue140331}, \eqref{e:c-sj1} and notice that $l(x,\psi'_x(t,x,\eta))=0$ at $\Sigma$, we can check that 
\begin{equation}\label{e-gue140326}
\begin{split}
&\abs{\pr^\alpha_x\pr^\beta_\eta\alpha_0(x,\eta)}\\
&\leq\sum_{\abs{\alpha'}+\abs{\alpha''}=\abs{\alpha},\abs{\beta'}+\abs{\beta''}=\abs{\beta}}\int\abs{\pr^{\alpha'}_x\pr^{\beta'}_\eta\bigr(e^{i(\psi(t, x, \eta)-<x, \eta>)}t\psi'_t(t, x, \eta)\bigr)}\times\\
&\quad\quad\quad\quad\quad\quad\quad\quad\abs{\pr^{\alpha''}_x\pr^{\beta''}_\eta\bigr(l(x,\psi'_x(t,x,\eta))a(t,x,\eta)\bigr)}dt\\
&\leq C_{\alpha,\beta}\sum_{\abs{\alpha'}+\abs{\alpha''}=\abs{\alpha},\abs{\beta'}+\abs{\beta''}=\abs{\beta}}\int(1+\abs{\eta})^
{\frac{\abs{\alpha'}-\abs{\beta'}}{2}}e^{-t\mu\abs{\eta}}e^{-\frac{1}{2}{\rm Im\,}\psi(t, x, \eta)}\times\\
&\quad\quad\quad\quad\quad\quad\quad\quad(1+\abs{\eta})^{1-\abs{\beta''}}\Big({\rm dist\,}
\big((x, \frac{\eta}{\abs{\eta}}),\Sigma\big)\Big)^{\max\{0,1-\abs{\beta''}\}}dt\\
&\leq\Td C_{\alpha,\beta}\sum_{\abs{\alpha'}+\abs{\alpha''}=\abs{\alpha},\abs{\beta'}+\abs{\beta''}=\abs{\beta}}\int(1+\abs{\eta})^
{\frac{\abs{\alpha'}-\abs{\beta'}}{2}}e^{-c\frac{t\abs{\eta}^2}{1+t\abs{\eta}}\Big({\rm dist\,}
\big((x, \frac{\eta}{\abs{\eta}}),\Sigma\big)\Big)^2}\times\\
&\quad\quad\quad\quad\quad\quad\quad\quad e^{-t\mu\abs{\eta}}(1+\abs{\eta})^{1-\abs{\beta''}}\Big({\rm dist\,}
\big((x, \frac{\eta}{\abs{\eta}}),\Sigma\big)\Big)^{\max\{0,1-\abs{\beta''}\}}dt,
\end{split}
\end{equation}
where $c>0$, $\mu>0$, $C_{\alpha,\beta}>0$ and $\Td C_{\alpha,\beta}>0$ are constants. 

When $\abs{\beta''}=0$, we have 
\begin{equation}\label{e-gue140326I}
\begin{split}
&\int(1+\abs{\eta})^
{\frac{\abs{\alpha'}-\abs{\beta'}}{2}}e^{-c\frac{t\abs{\eta}^2}{1+t\abs{\eta}}\Big({\rm dist\,}
\big((x, \frac{\eta}{\abs{\eta}}),\Sigma\big)\Big)^2}\times\\
&\quad\quad\quad\quad\quad\quad\quad\quad e^{-t\mu\abs{\eta}}(1+\abs{\eta})^{1-\abs{\beta''}}\Big({\rm dist\,}
\big((x, \frac{\eta}{\abs{\eta}}),\Sigma\big)\Big)dt\\
&\leq \Td c\int(1+\abs{\eta})^{\frac{\abs{\alpha'}-\abs{\beta'}}{2}}\frac{1}{\sqrt{t}}(1+\abs{\eta})^{-\abs{\beta''}}e^{-\frac{1}{2}t\mu\abs{\eta}}dt\\
&\leq\Td c_1\int^{\frac{1}{1+\abs{\eta}}}_0(1+\abs{\eta})^{\frac{\abs{\alpha'}-\abs{\beta'}}{2}}\frac{1}{\sqrt{t}}(1+\abs{\eta})^{-\abs{\beta''}}e^{-\frac{1}{2}t\mu\abs{\eta}}dt\\
&+\Td c_2\int^{\infty}_{\frac{1}{1+\abs{\eta}}}(1+\abs{\eta})^{\frac{\abs{\alpha'}-\abs{\beta'}}{2}}\frac{1}{\sqrt{t}}(1+\abs{\eta})^{-\abs{\beta''}}e^{-\frac{1}{2}t\mu\abs{\eta}}dt\\
&\leq\Td c_3(1+\abs{\eta})^{-\frac{1}{2}-\abs{\beta''}+\frac{\abs{\alpha'}-\abs{\beta'}}{2}},
\end{split}
\end{equation}
where $\abs{\eta}\geq1$, $\Td c_1>0$, $\Td c_2>0$ and $\Td c_3>0$ are constants. 

When $\abs{\beta''}\geq1$, we have 
\begin{equation}\label{e-gue140326II}
\begin{split}
&\int(1+\abs{\eta})^
{\frac{\abs{\alpha'}-\abs{\beta'}}{2}}e^{-c\frac{t\abs{\eta}^2}{1+t\abs{\eta}}\Big({\rm dist\,}
\big((x, \frac{\eta}{\abs{\eta}}),\Sigma\big)\Big)^2}\times\\
&\quad\quad\quad\quad\quad\quad\quad\quad e^{-t\mu\abs{\eta}}(1+\abs{\eta})^{1-\abs{\beta''}}dt\\
&\leq \hat c\int(1+\abs{\eta})^{\frac{\abs{\alpha'}-\abs{\beta'}}{2}}(1+\abs{\eta})^{1-\abs{\beta''}}e^{-t\mu\abs{\eta}}dt\\
&\leq\hat c_1(1+\abs{\eta})^{-\abs{\beta''}+\frac{\abs{\alpha'}-\abs{\beta'}}{2}}\\
&\leq\hat c_2(1+\abs{\eta})^{-\frac{1}{2}+\frac{\abs{\alpha'}-\abs{\beta'}-\abs{\beta''}}{2}},
\end{split}
\end{equation}
where $\abs{\eta}\geq1$, $\hat c_1>0$, $\hat c_2>0$ are constants. 

From \eqref{e-gue140326}, \eqref{e-gue140326I} and \eqref{e-gue140326II}, we conclude that $\alpha_0(x,\eta)\in S^{-\frac{1}{2}}_{\frac{1}{2},\frac{1}{2}}(T^*\Omega)$. The theorem follows. 
\end{proof}

\section{Microlocal Hodge decomposition theorems for ${\rm P\,}$ and the proof of Theorem~\ref{t-gue140325}}\label{s-gue140326}

By using Theorem~\ref{t-gue140325II} and Theorem~\ref{t-gue140325III}, we will establish microlocal Hodge decomposition theorems for ${\rm P\,}$ in this section. Let $G\in L^{-1}_{\frac{1}{2},\frac{1}{2}}(X)$, $S\in L^0_{\frac{1}{2},\frac{1}{2}}(X)$ be as in Theorem~\ref{t-gue140325II}. From \eqref{e-gue140325} and \eqref{e-gue140325aIV}, we have 
\begin{equation}\label{e-gue140326III}
\begin{split}
{\rm P\,}\ol GG&=(\Box_b\ol\Box_b+L_1\circ L_2+L_3)\ol GG\\
&=\Box_b(I-\ol S)G+L_1\circ L_2\ol GG+L_3\ol GG\\
&=I-S-\Box_b\ol SG+L_1\circ L_2\ol GG+L_3\ol GG\\
&=I-S-\ol S\Box_bG+\ol S\Box_bG-\Box_b\ol SG+L_1\circ L_2\circ\ol GG+L_3\ol GG\\
&=I-S-\ol S(I-S)+[\ol S,\Box_b]G+L_1\circ L_2\ol GG+L_3\ol GG\\
&=I-S-\ol S+\ol SS+[\ol S,\Box_b]G+L_1\circ L_2\ol GG+L_3\ol GG.
\end{split}
\end{equation}
We need 

\begin{lem}\label{l-gue140326}
We have 
\begin{equation}\label{e-gue140326IV}
\begin{split}
&[\ol S,\Box_b]G+L_1\circ L_2\ol GG+L_3\ol GG\\
&\mbox{$:H^s(X)\To H^{s+\frac{1}{2}}(X)$ is continuous, for every $s\in\mathbb Z$}.
\end{split}
\end{equation}
\end{lem}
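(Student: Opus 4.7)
The plan is to show each of the three summands maps $H^s(X)$ continuously into $H^{s+1/2}(X)$, so that their sum does. For $L_3\ol GG$: taking complex conjugates of the identity $\Box_bG+S=I$ gives $\ol\Box_b\ol G+\ol S=I$, so the proof of Theorem~\ref{t-gue140325III} applies verbatim with $\ol G$ in place of $G$, yielding $L_3\ol G\in L^{-1/2}_{\frac{1}{2},\frac{1}{2}}(X)$. Combined with $G\in L^{-1}_{\frac{1}{2},\frac{1}{2}}(X)$ and Proposition~\ref{p:he-calderon}, we obtain $L_3\ol GG:H^s\to H^{s+3/2}$. For $L_1L_2\ol GG=L_1\circ(L_2\ol G)\circ G$: again $L_2\ol G\in L^{-1/2}_{\frac{1}{2},\frac{1}{2}}$ by Theorem~\ref{t-gue140325III}. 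Successive application of Proposition~\ref{p:he-calderon} yields $G:H^s\to H^{s+1}$, then $L_2\ol G:H^{s+1}\to H^{s+3/2}$, and finally $L_1:H^{s+3/2}\to H^{s+1/2}$ (as $L_1$ is a first-order differential operator), so the composite gains $1/2$ derivative.

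For $[\ol S,\Box_b]G$: since $\ol\Box_b$ is self-adjoint with $\ol S$ the orthogonal projection onto $\Ker\ol\Box_b$, we have $\ol\Box_b\ol S=\ol S\ol\Box_b=0$. Hence $[\ol S,\Box_b]=[\ol S,R]$ with $R:=\Box_b-\ol\Box_b$ a first-order differential operator (the principal symbols of the two Kohn Laplacians coincide). In local coordinates with an orthonormal frame $Z_1$ of $T^{1,0}X$, a direct computation using $d\theta(Z_1,\ol Z_1)=-2i$ shows $R=iAT+L'+c$ with $A\in\Real$ nonzero, $L'\in C^\infty(X,T^{1,0}X\oplus T^{0,1}X)$, and $c\in C^\infty(X)$. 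Expanding $[\ol S,L']G=\ol SL'G-L'\ol SG$ and using $L'G\in L^{-1/2}_{\frac{1}{2},\frac{1}{2}}$ (Theorem~\ref{t-gue140325III}) together with the analogous estimate $L'\ol S\in L^{1/2}_{\frac{1}{2},\frac{1}{2}}$ (which follows from the FIO representation of $\ol S$ and the vanishing of $L'\ol\varphi$ at $x=y$), one places $[\ol S,L']G$ in $L^{-1/2}_{\frac{1}{2},\frac{1}{2}}(X)$; the zeroth-order piece $[\ol S,c]G$ is even better. For the Reeb contribution $[\ol S,T]G$, one applies mixed symbol calculus: since $T\in L^1_{1,0}(X)$ is classical while $\ol S\in L^0_{\frac{1}{2},\frac{1}{2}}(X)$, each $\xi$-derivative of the type $(\tfrac12,\tfrac12)$ symbol drops the order by $\tfrac12$, producing $[\ol S,T]\in L^{1/2}_{\frac{1}{2},\frac{1}{2}}(X)$, whence $[\ol S,T]G\in L^{-1/2}_{\frac{1}{2},\frac{1}{2}}(X)$ and Proposition~\ref{p:he-calderon} concludes.

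The hard part is the mixed-calculus estimate $[\ol S,T]\in L^{1/2}_{\frac{1}{2},\frac{1}{2}}(X)$. While morally clear from symbol calculus, a rigorous verification requires direct analysis using the FIO representation $\ol S(x,y)\equiv\int_0^\infty e^{-i\ol\varphi(x,y)t}\ol a(x,y,t)\,dt$ (the complex conjugate of Theorem~\ref{t-gue140325II}), the identity $T\ol\varphi|_{x=y}=1$ combined with vanishing of $T\ol\varphi-1$ at the diagonal, and integration by parts in $t$ using formulas such as~\eqref{e:0}; this is the essential technical ingredient since $T\notin C^\infty(X,T^{1,0}X\oplus T^{0,1}X)$ and so Theorem~\ref{t-gue140325III} does not apply to the Reeb direction.
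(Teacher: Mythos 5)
Your proposal lands on the same two main ideas as the paper's proof — Theorem~\ref{t-gue140325III} applied to $\ol G$ for the $L_1L_2\ol GG$ and $L_3\ol GG$ terms, and the observation $[\ol S,\Box_b]=[\ol S,\Box_b-\ol\Box_b]$ with $\Box_b-\ol\Box_b$ first order — but then takes a more roundabout route for the commutator. The paper handles $[\ol S,\Box_b-\ol\Box_b]$ in one stroke: since $\Box_b-\ol\Box_b$ is a first-order differential operator (type $(1,0)$) and $\ol S\in L^0_{\frac12,\frac12}(X)$, the mixed calculus gives $[\ol S,\Box_b-\ol\Box_b]\in L^{\frac12}_{\frac12,\frac12}(X)$ directly, with no need to split off the Reeb part. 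Your decomposition $\Box_b-\ol\Box_b=iAT+L'+c$ is correct (and the computation of the $T$-coefficient from $d\theta(Z_1,\ol Z_1)=-2i$ is right), but it is extra work: the same mixed-calculus gain you invoke for the $T$ piece applies just as well to $L'$ and to the zeroth-order part, so there is no reason to estimate $L'\ol S$ separately via the FIO representation and the diagonal vanishing of $L'\ol\varphi$. On the part you flag as the "hard part," note also that the cancellation driving the gain in $[\ol S,T]$ is not governed by $T\ol\varphi|_{x=y}=1$ alone; what actually vanishes on the diagonal is the combination $T_x\ol\varphi+T_y\ol\varphi$, thanks to $\varphi(x,y)=-\ol\varphi(y,x)$, and this is what makes the commutator (as opposed to $T\ol S$ or $\ol S T$ individually) gain half an order. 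Finally, be aware that neither you nor the paper carries out the $(1,0)\times(\frac12,\frac12)$ commutator estimate in full detail — the paper's "it is not difficult to see" plays the same role as your "morally clear" — so your proposal is at essentially the same level of rigor, just with an unnecessary detour through the Reeb decomposition.
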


\begin{proof}
From Theorem~\ref{t-gue140325III}, we see that $L_1\circ L_2\ol G\in L^{\frac{1}{2}}_{\frac{1}{2},\frac{1}{2}}(X)$. Thus, 
\begin{equation}\label{e-gue140326V}
\mbox{$L_1\circ L_2\ol GG+L_3\ol GG:H^s(X)\To H^{s+\frac{1}{2}}(X)$ is continuous, $\forall s\in\mathbb Z$}.
\end{equation}
Since $\ol\Box_b\ol S=\ol S\,\ol\Box_b=0$, we have 
\begin{equation}\label{e-gue140326VII}
[\ol S,\Box_b]=[\ol S,\Box_b-\ol\Box_b].
\end{equation}
Since the principal symbol of $\Box_b$ is real, $\Box_b-\ol\Box_b$ is a first order partial differential operator. From this observation and note that $\ol S\in L^0_{\frac{1}{2},\frac{1}{2}}(X)$, it is not difficult to see that $[\ol S,\Box_b-\ol\Box_b]\in L^{\frac{1}{2}}_{\frac{1}{2},\frac{1}{2}}(X)$. From this and \eqref{e-gue140326VII}, we conclude that 
\begin{equation}\label{e-gue140326VIII}
\mbox{$[\ol S,\Box_b]G:H^s(X)\To H^{s+\frac{1}{2}}(X)$ is continuous, $\forall s\in\mathbb Z$}.
\end{equation}
From \eqref{e-gue140326VIII} and \eqref{e-gue140326V}, \eqref{e-gue140326IV} follows. 
\end{proof}

We also need 

\begin{lem}\label{l-gue140326I}
We have $\ol SS\equiv0$ on $X$, $S\ol S\equiv0$ on $X$. 
\end{lem}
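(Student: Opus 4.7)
The plan is to prove $S\ol S\equiv 0$ by composing the oscillatory-integral representations of $S$ and $\ol S$ given by Theorem \ref{t-gue140325II} and showing the resulting phase has no stationary points in the integration variable. The companion identity $\ol S S\equiv 0$ then follows by taking adjoints: $S$ is an orthogonal projection, so $S^{*}=S$, and from the kernel relation $S(x,y)=\ol{S(y,x)}$ one checks that $\ol S$ is also self-adjoint, whence $(S\ol S)^{*}=\ol S S$.

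First, using Theorem \ref{t-gue140325II} together with the symmetry $\varphi(x,y)=-\ol\varphi(y,x)$ from \eqref{e-gue140325VIII}, I would write, modulo smoothing,
\[
S(x,u)\equiv\int_0^\infty e^{i\varphi(x,u)t}a(x,u,t)\,dt,\qquad
\ol S(u,y)\equiv\int_0^\infty e^{i\varphi(y,u)s}\ol a(u,y,s)\,ds,
\]
which upon composition gives
\[
(S\ol S)(x,y)\equiv\iiint e^{i\Psi}\,a(x,u,t)\,\ol a(u,y,s)\,dt\,ds\,du,\qquad
\Psi=\varphi(x,u)t+\varphi(y,u)s.
\]
Since ${\rm Im\,}\varphi\geq 0$ with equality only on the diagonal, ${\rm Im\,}\Psi\geq 0$ and the essential contribution comes from $x\approx u\approx y$. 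The key geometric input is the identity $\partial_u\varphi(z,u)|_{u=z}=\theta(z)$, which follows from differentiating $\varphi(z,u)=-\ol\varphi(u,z)$ in $u$, using $d_x\varphi|_{x=y}=-\theta(x)$, and the reality of $\theta$. This gives
\[
\partial_u\Psi\big|_{x=u=y}=(t+s)\theta(x),
\]
which is nonzero because $\theta$ is nowhere vanishing and $t,s>0$; hence there is a constant $c>0$ such that $|\partial_u\Psi|\geq c(t+s)$ in a neighbourhood of the diagonal.

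Next I would run a non-stationary phase argument in $u$. On a small neighbourhood of the diagonal I iterate the transpose of $L=\frac{1}{i|\partial_u\Psi|^2}\sum_j\ol{\partial_{u_j}\Psi}\,\partial_{u_j}$ (which satisfies $Le^{i\Psi}=e^{i\Psi}$) in integration by parts; each iteration produces a factor of order $(t+s)^{-1}$ in the amplitude, so after sufficiently many iterations the $(t,s)$-integral converges absolutely and delivers any desired regularity of $(S\ol S)(x,y)$ in $(x,y)$. Off the diagonal, the quadratic lower bound ${\rm Im\,}\varphi(x,u)\geq c|x-u|^2$ from \eqref{e-gue140205VIm} produces pointwise exponential decay of $e^{-{\rm Im\,}\Psi}$ in $t$ or $s$, so that region is immediately smooth; a partition of unity patches the two regimes and shows $(S\ol S)(x,y)\in C^\infty(X\times X)$.

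The main obstacle is to carry out the integration by parts cleanly for a complex phase with unbounded parameters $t,s$: one needs uniform estimates on the derivatives of $|\partial_u\Psi|^{-2}$ that are compatible with the symbol class of $a$ and with the factor $e^{-{\rm Im\,}\Psi}$. The estimates in Lemma \ref{l-gue140331} and Theorem \ref{t-gue140325II} provide the natural model for these bounds and supply the technical tools needed.
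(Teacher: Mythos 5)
Your proposal takes essentially the same route as the paper: compose the oscillatory integral representations of $S$ and $\ol S$, observe that the gradient of the combined phase with respect to the composition variable is $(t+s)\theta(x)$ on the diagonal---non-vanishing because the two contributions add rather than cancel---and run a non-stationary-phase argument. The paper does the identical computation for $\ol S S$ (after rescaling $\sigma=st$ the non-degenerate gradient reads $-(s+1)\theta(x)$) and obtains $S\ol S\equiv 0$ ``similarly'', so your adjoint step is a clean substitute. One caution: \eqref{e-gue140205VIm} is a local estimate near the diagonal, and the stated hypotheses on $\varphi$ only give ${\rm Im\,}\varphi\geq 0$ and $\varphi\neq 0$ off the diagonal, not ${\rm Im\,}\varphi>0$ there; for the off-diagonal regime it is therefore safer to integrate by parts in $t$ (or $s$) using only $\varphi(x,u)\neq 0$, as the paper does, rather than rely on exponential decay of $e^{-{\rm Im\,}\Psi}$.
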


\begin{proof}
We first notice that $\ol S\circ S$ is smoothing away $x=y$. 
We have 
\begin{equation}\label{e-gue140216f}
\begin{split}
\ol S\circ S(x,y)&\equiv\int_{\sigma>0,t>0}e^{-i\ol\varphi(x,w)\sigma+i\varphi(w,y)t}\ol a(x,w,\sigma)a(w,y,t)d\sigma dv_X(w)dt\\
&\equiv\int_{s>0,t>0}e^{it(-\ol\varphi(x,w)s+\varphi(w,y))}t\ol a(x,w,st)a(w,y,t)ds dv_X(w)dt,
\end{split}\end{equation}
where $dv_X=\theta\wedge d\theta$ is the volume form. 
Take $\chi\in C^\infty_0(\Real,[0,1])$ with $\chi=1$ on $[-\frac{1}{2},\frac{1}{2}]$, $\chi=0$ on $]-\infty,-1]\bigcup[1,\infty[$. From \eqref{e-gue140216f}, we have 
\begin{equation}\label{e-gue140216fI}
\begin{split}
&\ol S\circ S(x,y)\equiv I_\varepsilon+II_\varepsilon,\\
&I_\varepsilon=\int_{s>0,t>0}e^{it(-\ol\varphi(x,w)s+\varphi(w,y))}\chi(\frac{\abs{x-w}^2}{\varepsilon})t\ol a(x,w,st)a(w,y,t)ds dv_X(w)dt,\\
&II_\varepsilon=\int_{s>0,t>0}e^{it(-\ol\varphi(x,w)s+\varphi(w,y))}(1-\chi(\frac{\abs{x-w}^2}{\varepsilon}))\\
&\quad\quad\quad\times t\ol a(x,w,st)a(w,y,t)ds dv_X(w)dt,
\end{split}\end{equation}
where $\varepsilon>0$ is a small constant. Since $\varphi(x,w)=0$ if and only if $x=w$, we can integrate by parts with respect to $s$ and conclude that $II_\varepsilon$ is smoothing. Since $\ol S\circ S$ is smoothing away $x=y$, we may assume that $\abs{x-y}<\varepsilon$.
Since $d_w(-\ol\varphi(x,w)s+\varphi(w,y))|_{x=y=w}=-\omega_0(x)(s+1)\neq0$, if $\varepsilon>0$ is small, we can integrate by parts with respect to $w$ and conclude that $I_\varepsilon$ is smoothing. We get $\ol S\circ S\equiv0$ on $X$. Similarly, we can repeat the procedure above and conclude that $S\circ\ol S\equiv0$ on $X$. The lemma follows. 
\end{proof}

Put 
\begin{equation}\label{e-gue140326aI}
R_0=\ol SS+[\ol S,\Box_b]G+L_1\circ L_2\ol GG+L_3\ol GG.
\end{equation}
From Lemma~\ref{l-gue140326} and Lemma~\ref{l-gue140326I}, we see that 
\begin{equation}\label{e-gue140326aII}
\mbox{$R_0:H^s(X)\To H^{s+\frac{1}{2}}(X)$ is continuous, $\forall s\in\mathbb Z$.}
\end{equation}
We can prove 

\begin{thm}\label{t-gue140328}
With the assumptions and notations above, for every $m\in\mathbb N_0$, there are continuous operators 
\[R_m, A_m:C^\infty_0(X)\To\mathscr D'(X)\]
such that 
\begin{equation}\label{e-gue140328}
\begin{split}
&{\rm P\,}A_m+S+\ol S=I+R_m,\\
&\mbox{$A_m:H^s(X)\To H^{s+2}(X)$ is continuous, $\forall s\in\mathbb Z$},\\
&\mbox{$R_m:H^s(X)\To H^{s+\frac{m+1}{2}}(X)$ is continuous, $\forall s\in\mathbb Z$}.
\end{split}
\end{equation}
\end{thm}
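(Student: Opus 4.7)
My plan is to construct $(A_m, R_m)_{m \in \mathbb N_0}$ by induction on $m$ via a Neumann-type iteration anchored at the parametrix $A_0 := \ol G G$. For the base case, $A_0 \in L^{-2}_{\frac{1}{2},\frac{1}{2}}(X)$ by Theorem~\ref{t-gue140325II} and composition calculus, so $A_0: H^s(X) \to H^{s+2}(X)$ by Proposition~\ref{p:he-calderon}; the identity \eqref{e-gue140326III} together with Lemma~\ref{l-gue140326I} yields ${\rm P\,}A_0 + S + \ol S = I + R_0$ with $R_0$ as in \eqref{e-gue140326aI}, and Lemma~\ref{l-gue140326} provides $R_0 : H^s \to H^{s+\frac{1}{2}}$. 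For the inductive step I set $A_{m+1} := A_m - A_0 R_m$; using the inductive identity for $A_m$ and the base identity for $A_0$, a direct algebraic calculation gives
\[
{\rm P\,}A_{m+1} + S + \ol S = I + R_{m+1}, \qquad R_{m+1} := (S + \ol S) R_m - R_0 R_m.
\]
The mapping property $A_{m+1}: H^s \to H^{s+2}$ is inherited from $A_m$ together with the fact that $A_0 R_m$ gains $2 + \frac{m+1}{2}$ derivatives, and the summand $R_0 R_m$ of $R_{m+1}$ gains exactly $\frac{m+2}{2}$ derivatives by composing the half-gain of $R_0$ with the $\frac{m+1}{2}$-gain of $R_m$.

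The main difficulty is controlling the other summand $(S + \ol S) R_m$, which naively inherits only the $\frac{m+1}{2}$-gain of $R_m$ because $S, \ol S \in L^0_{\frac{1}{2},\frac{1}{2}}(X)$. The missing half-derivative must come from microlocal structure. By Theorem~\ref{t-gue140325II}, $S$ is microlocally an FIO living on the positive characteristic $\Sigma_+$ and $\ol S$ is the analogous FIO on the negative characteristic $\Sigma_-$ (these being disjoint by Lemma~\ref{l-gue140326I}); conversely, $\ol G$ is classical of order $-2$ microlocally near $\Sigma_+$ and degenerate only near $\Sigma_-$, and symmetrically for $G$. To exploit this I would carry through the induction a strengthened form of the statement: $R_m$ can be written, modulo smoothing, as a finite sum $\sum_\alpha B_\alpha \ol G G$ with $B_\alpha$ in the type-$(\frac{1}{2},\frac{1}{2})$ calculus of suitably bounded order. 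Then the microlocal composition lemma below is applied term by term, together with the vanishing identities $S \Box_b \equiv 0$, $\ol S \ol \Box_b \equiv 0$, $S \ol S \equiv \ol S S \equiv 0$, and Theorem~\ref{t-gue140325III}.

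The technical heart is therefore a microlocal composition lemma asserting that $S \ol G$ and $\ol S G$ each gain one extra derivative over their naive orders, so that $S \cdot B \ol G G$ and $\ol S \cdot B \ol G G$ gain $\frac{1}{2}$ more derivative than the uncomposed operator $B \ol G G$. I would prove this in the spirit of Theorem~\ref{t-gue140325III}: writing $S$ as the oscillatory integral $\int e^{i\varphi t} a(x,y,t)\, dt$ from Theorem~\ref{t-gue140325II}, writing $\ol G$ via a heat-kernel representation \eqref{e-gue140329f} adapted to $\ol\Box_b$, and exploiting that the heat phase for $\ol\Box_b$ reduces to $\langle x,\eta\rangle$ on $\Sigma_-$ while $\varphi$ is concentrated on $\Sigma_+$, one extracts the extra derivative from the disjointness of these complex canonical relations combined with the bounds \eqref{e:c-sj1} and Lemma~\ref{l-gue140331}; this is precisely the step I expect to be the main obstacle, as it requires tracking the symbolic cancellations between the FIO phase $\varphi$ of $S$ and the parametrix phase of $\ol G$ just as Theorem~\ref{t-gue140325III} did for the differential operator $L$ against $G$. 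With the composition lemma in hand, the Neumann iteration $A_{m+1} = A_m - A_0 R_m$ closes and Theorem~\ref{t-gue140328} follows by induction.
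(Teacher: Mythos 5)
Your iteration $A_{m+1}:=A_m-A_0R_m$ with $A_0=\ol GG$ is structurally sound and, modulo smoothing, telescopes to the paper's closed form $A_m=\ol GG\bigl(I-R_0+\cdots+(-R_0)^m\bigr)$; your formula $R_{m+1}=(S+\ol S)R_m-R_0R_m$ is also correct. However, the step you yourself flag as the ``main obstacle'' is a genuine gap, and it is aimed at the wrong target: you never prove the microlocal composition lemma asserting that $S\ol G$ and $\ol S G$ gain an extra derivative, and no such lemma is needed. The term $(S+\ol S)R_m$ is handled by an elementary algebraic identity which you missed. Apply $S+\ol S$ on the left of the base identity ${\rm P\,}\ol GG+S+\ol S=I+R_0$: since $(S+\ol S){\rm P\,}=0$, this gives $(S+\ol S)^2=S+\ol S+(S+\ol S)R_0$, while idempotence of $S$ and $\ol S$ together with Lemma~\ref{l-gue140326I} (namely $S\ol S\equiv\ol SS\equiv0$) gives $(S+\ol S)^2\equiv S+\ol S$; hence
\[
(S+\ol S)R_0\equiv 0\quad\text{on }X.
\]
With this in hand your induction closes immediately: by induction $R_m\equiv R_0(-R_0)^m$ modulo smoothing, so $(S+\ol S)R_m\equiv\bigl((S+\ol S)R_0\bigr)(-R_0)^m\equiv0$, and therefore $R_{m+1}\equiv -R_0R_m$ gains $\frac{m+2}{2}$ derivatives, exactly as required. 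Your alternative route --- carrying the decomposition $R_m\equiv\sum_\alpha B_\alpha\ol GG$ through the induction and exploiting the disjointness of the canonical relations of $S$ and $\ol G$ --- is both unproved and doubtful as stated: the pseudodifferential factors $B_\alpha$ in the type-$(\frac{1}{2},\frac{1}{2})$ calculus are not microlocally concentrated near either branch of $\Sigma$, so you cannot simply transfer the wave-front separation between $S$ and $\ol G$ across the factor $B_\alpha$ term by term. The algebraic identity $(S+\ol S)R_0\equiv0$ sidesteps all of this, supplies exactly the missing half-derivative, and is in fact the mechanism the paper uses: it writes down $A_m=\ol GG\bigl(I-R_0+\cdots+(-R_0)^m\bigr)$ and $R_m=R_0(-R_0)^m+F$ with $F$ smoothing directly, with no induction and no microlocal input beyond Lemma~\ref{l-gue140326I}.
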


\begin{proof}
From \eqref{e-gue140326aI} and \eqref{e-gue140326III}, we have 
\begin{equation}\label{e-gue140328I}
{\rm P\,}\ol GG+S+\ol S=I+R_0.
\end{equation}
Since $(S+\ol S){\rm P\,}=0$, from \eqref{e-gue140328I}, we have 
\begin{equation}\label{e-gue140328II}
(S+\ol S)^2=S+\ol S+(S+\ol S)R_0.
\end{equation}
From Lemma~\ref{l-gue140326I}, we have 
\begin{equation}\label{e-gue140328III}
(S+\ol S)^2=S^2+S\ol S+\ol SS+\ol S^2\equiv S+\ol S.
\end{equation}
From \eqref{e-gue140328II} and \eqref{e-gue140328III}, we conclude that 
\begin{equation}\label{e-gue140328IV}
\mbox{$(S+\ol S)R_0\equiv0$ on $X$}.
\end{equation}
Fix $m\in\mathbb N_0$. From \eqref{e-gue140328I}, we have 
\begin{equation}\label{e-gue140328V}
\begin{split}
&{\rm P\,}\ol GG\bigr(I-R_0+R^2_0+\cdots+(-R_0)^m\bigr)\\
&\quad+(S+\ol S)\bigr(I-R_0+R^2_0+\cdots+(-R_0)^m\bigr)\\
&=\bigr(I+R_0\bigr)\bigr(I-R_0+R^2_0+\cdots+(-R_0)^m\bigr)=I+R_0(-R_0)^{m}.
\end{split}
\end{equation}
From \eqref{e-gue140328IV}, we have 
\begin{equation}\label{e-gue140328VI}
\bigr(S+\ol S\bigr)\bigr(I-R_0+R^2_0+\cdots+(-R_0)^m\bigr)=S+\ol S-F,\ \ \mbox{$F$ is smoothing}. 
\end{equation}
Put $A_m=\ol GG\bigr(I-R_0+R^2_0+\cdots+(-R_0)^m\bigr)$, $R_m=R_0(-R_0)^m+F$. From \eqref{e-gue140328V}, \eqref{e-gue140328VI} and \eqref{e-gue140326aII}, we obtain 
\[
\begin{split}
&{\rm P\,}A_m+S+\ol S=I+R_m,\\
&\mbox{$A_m:H^s(X)\To H^{s+2}(X)$ is continuous, $\forall s\in\mathbb Z$},\\
&\mbox{$R_m:H^s(X)\To H^{s+\frac{m+1}{2}}(X)$ is continuous, $\forall s\in\mathbb Z$}.
\end{split}
\]
The theorem follows. 
\end{proof}

\begin{lem}\label{l-gue140430}
Let $u\in{\rm Dom\,}{\rm P\,}$. Then, $u=u^0+(S+\ol S)u$, for some $u^0\in H^2(X)\bigcap{\rm Dom\,}{\rm P\,}$. 
\end{lem}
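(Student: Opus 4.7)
The plan is to pass to the formal adjoint of the parametrix identity ${\rm P\,}A_m+(S+\ol S)=I+R_m$ from Theorem~\ref{t-gue140328} and apply it to~$u$. Since $S,\ol S$ are self-adjoint, ${\rm P\,}$ is formally symmetric, and $C^\infty(X)$ is a core for the maximal realization of the formally self-adjoint differential operator ${\rm P\,}$ (by Friedrichs' mollification on the compact manifold $X$), one obtains by taking adjoints the identity
\[
A_m^*{\rm P\,}+(S+\ol S)=I+R_m^*\quad\text{on ${\rm Dom\,}{\rm P\,}$,}
\]
where by duality $A_m^*:H^s(X)\To H^{s+2}(X)$ and $R_m^*:H^s(X)\To H^{s+\frac{m+1}{2}}(X)$ are continuous for every $s\in\mathbb Z$.

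Applying this identity to $u\in{\rm Dom\,}{\rm P\,}$ and choosing $m\geq 3$, I obtain
\[
u^0:=u-(S+\ol S)u=A_m^*{\rm P\,}u-R_m^*u.
\]
Since ${\rm P\,}u\in L^2(X)$, we have $A_m^*{\rm P\,}u\in H^2(X)$, while $R_m^*u\in H^{\frac{m+1}{2}}(X)\subset H^2(X)$; hence $u^0\in H^2(X)$.

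It remains to check $u^0\in{\rm Dom\,}{\rm P\,}$. Since $u\in{\rm Dom\,}{\rm P\,}$ and ${\rm Dom\,}{\rm P\,}$ is a vector space, it suffices to show $(S+\ol S)u\in{\rm Dom\,}{\rm P\,}$; in fact I claim the stronger statement $(S+\ol S)u\in{\rm Ker\,}{\rm P\,}$. Approximate $u$ in $L^2(X)$ by $u_j\in C^\infty(X)$. Since $S\in L^0_{\frac{1}{2},\frac{1}{2}}(X)$ by Theorem~\ref{t-gue140325II}, we have $Su_j,\ol Su_j\in C^\infty(X)$. As a smooth global CR function, $Su_j={\rm Re\,}(Su_j)+i{\rm Im\,}(Su_j)\in\mathcal P+i\mathcal P\subset{\rm Ker\,}{\rm P\,}$; since ${\rm P\,}$ is real, $\ol Su_j=\ol{S\ol{u}_j}$ is also in ${\rm Ker\,}{\rm P\,}$. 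Thus ${\rm P\,}(S+\ol S)u_j=0$, and letting $j\To\infty$ while using that ${\rm P\,}$ is closed yields $(S+\ol S)u\in{\rm Dom\,}{\rm P\,}$ with ${\rm P\,}(S+\ol S)u=0$.

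The main obstacle is conceptual rather than computational: the direct parametrix ${\rm P\,}A_m+(S+\ol S)=I+R_m$ only shows $u-(S+\ol S)u\in L^2(X)$, since ${\rm P\,}A_mu$ need not lie in $H^2(X)$. The adjoint identity is exactly what trades the bad leading factor ${\rm P\,}$ for the smoothing $A_m^*$ acting on ${\rm P\,}u\in L^2(X)$, delivering the $H^2$ regularity of~$u^0$.
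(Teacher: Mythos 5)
Your proposal is correct and reproduces the paper's argument essentially verbatim: both pass to the adjoint parametrix identity $A_m^*{\rm P\,}+S+\ol S=I+R_m^*$, apply it to $u\in{\rm Dom\,}{\rm P\,}$, and read off $u^0=A_m^*{\rm P\,}u-R_m^*u\in H^2(X)$ for $m\geq3$; you also supply the short approximation argument that $(S+\ol S)u\in{\rm Ker\,}{\rm P\,}$, which the paper asserts without proof.

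One small point to reconsider: you pass from the adjoint identity on $C^\infty(X)$ to ${\rm Dom\,}{\rm P\,}$ by invoking that $C^\infty(X)$ is a core for the maximal realization of ${\rm P\,}$, via Friedrichs' mollification. For a fourth-order non-hypoelliptic operator the mollifier commutator is of order three and need not tend to zero in $L^2$, so this core property is not obvious and would itself require proof. Fortunately it is not needed: the operator $A_m^*{\rm P\,}+(S+\ol S)-I-R_m^*$ vanishes on $C^\infty(X)$ and extends continuously from $L^2(X)$ to $H^{-2}(X)$ (since ${\rm P\,}:L^2(X)\To H^{-4}(X)$ as a differential operator, $A_m^*:H^{-4}(X)\To H^{-2}(X)$, and $S,\ol S,R_m^*$ are bounded on $L^2(X)$), so it vanishes on all of $L^2(X)\supset{\rm Dom\,}{\rm P\,}$ by $L^2$-density of $C^\infty(X)$ alone. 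With that minor correction the argument is complete and matches the paper.
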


\begin{proof}
Fix $m\geq 3$, $m\in\mathbb N$. Let $A_m$, $R_m$ be as in \eqref{e-gue140328} and let $A^*_m$ and $R^*_m$ be the adjoints of $A_m$ and $R_m$ with respect to $(\,\cdot\,|\,\cdot\,)$ respectively. Then, 
\begin{equation}\label{e-gue140430}
A^*_m{\rm P\,}+S+\ol S=I+R^*_m.
\end{equation}
Let $u\in{\rm Dom\,}{\rm P\,}$. Then, ${\rm P\,}u=v\in L^2(X)$. From \eqref{e-gue140430}, it is easy to see that 
\[u=A^*_mv-R^*_mu+(S+\ol S)u.\]
Since $A^*_mv-R^*_mu\in H^2(X)$ and $(S+\ol S)u\in{\rm Ker\,}{\rm P\,}\subset{\rm Dom\,}{\rm P\,}$, the lemma follows. 
\end{proof}

\begin{lem}\label{l-gue140430I}
Let $u\in{\rm Dom\,}{\rm P\,}$. Then, 
\[(\,(S+\ol S)u\,|\,{\rm P\,}g\,)=0,\ \ \forall g\in{\rm Dom\,}{\rm P\,}.\]
\end{lem}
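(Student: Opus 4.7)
The plan is to show that $(S+\ol S)u$ lies in ${\rm Ker\,}{\rm P\,}$ in a strong enough sense, and then transfer the pairing to $g$ via the formal symmetry of ${\rm P\,}$. The essential inputs are that the Szeg\"o projection $S$ produces smooth CR functions from smooth data and the inclusion $\mathcal{P}_0\subset\mathcal{P}\subset{\rm Ker\,}{\rm P\,}$.

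First I would prove the pointwise identity $(S+\ol S)u\in{\rm Ker\,}{\rm P\,}$ for every $u\in C^\infty(X)$. Decompose $u=v_1+iv_2$ with $v_j\in C^\infty(X,\Real)$. Since $S\in L^0_{\frac{1}{2},\frac{1}{2}}(X)$ maps $C^\infty(X)$ continuously into $C^\infty(X)$ and $\ddbar_b\circ S=0$, each $Sv_j$ is a smooth global CR function on $X$. The reality of $v_j$ yields $\ol Sv_j=\ol{Sv_j}$, hence $(S+\ol S)v_j=2{\rm Re\,}(Sv_j)\in\mathcal{P}_0$. By $\Complex$-linearity, $(S+\ol S)u\in\mathcal{P}_0+i\mathcal{P}_0\subset{\rm Ker\,}{\rm P\,}$, and in particular ${\rm P\,}((S+\ol S)u)=0$.

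With this in hand, for smooth $u$ and any $g\in{\rm Dom\,}{\rm P\,}$ the formal symmetry of ${\rm P\,}$ (see \eqref{e-gue140325}) gives
\[
((S+\ol S)u\,|\,{\rm P\,}g)=({\rm P\,}((S+\ol S)u)\,|\,g)=0,
\]
since $(S+\ol S)u$ is smooth and so its distributional pairing with ${\rm P\,}g\in L^2(X)$ agrees with the $L^2$ inner product. For general $u\in{\rm Dom\,}{\rm P\,}\subset L^2(X)$, choose $u_n\in C^\infty(X)$ with $u_n\To u$ in $L^2(X)$. The $L^2$-boundedness of $S$ and $\ol S$ (both lie in $L^0_{\frac{1}{2},\frac{1}{2}}(X)$, cf.\ Proposition~\ref{p:he-calderon}) yields $(S+\ol S)u_n\To(S+\ol S)u$ in $L^2(X)$, and passing to the limit in $((S+\ol S)u_n\,|\,{\rm P\,}g)=0$ completes the proof. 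The only delicate point is justifying the symmetry identity in the middle step when $g\notin C^\infty(X)$, but this is immediate from the fact that ${\rm P\,}$ is a formally self-adjoint differential operator with smooth coefficients: the distributional identity $\langle{\rm P\,}f,g\rangle=\langle f,{\rm P\,}g\rangle$ holds for any $f\in C^\infty(X)$ and any distribution $g$ on $X$, and coincides with the $L^2$ pairing once both $f$ and ${\rm P\,}g$ are square-integrable.
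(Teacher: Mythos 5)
Your proof is correct and follows essentially the same strategy as the paper: approximate $u$ by smooth functions, observe that $(S+\ol S)$ applied to a smooth function lands in $\mathrm{Ker\,}{\rm P\,}$, and transfer the pairing via the symmetry of ${\rm P\,}$. Two small remarks on how the executions differ. First, the paper also approximates $g$ by smooth functions $g_k$ (using that ${\rm P\,}g_k\to{\rm P\,}g$ in $H^{-4}(X)$) so that the identity $(\,(S+\ol S)u_j\,|\,{\rm P\,}g_k\,)=(\,{\rm P\,}(S+\ol S)u_j\,|\,g_k\,)$ is only ever invoked for smooth arguments; you instead appeal directly to the distributional formal adjoint identity for a smooth $f$ paired against a distribution $g$. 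Both are fine, but the paper's route sticks closer to the stated symmetry property of ${\rm P\,}$ on $C^\infty(X)$, at the cost of a double limit. Second, you make explicit the point that the paper glosses over: writing $u=v_1+iv_2$ with $v_j$ real, $(S+\ol S)v_j=2\,\mathrm{Re}(Sv_j)\in\mathcal{P}_0\subset\mathrm{Ker\,}{\rm P\,}$, and then using $\Complex$-linearity of ${\rm P\,}$; this is exactly the justification needed for the paper's unremarked claim $({\rm P\,}(S+\ol S)u_j\,|\,g_k\,)=0$, and spelling it out is a genuine improvement in exposition. One tiny caution: in the last sentence you write the adjoint identity with a bilinear bracket $\langle\cdot,\cdot\rangle$ but apply it to the sesquilinear inner product $(\,\cdot\,|\,\cdot\,)$; since ${\rm P\,}$ is real (so formally self-adjoint for both pairings) this is harmless, but it is worth being consistent about which pairing is meant.
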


\begin{proof}
Let $u, g\in{\rm Dom\,}{\rm P\,}$. Take $u_j, g_j\in C^\infty(X)$, $j=1,2,\ldots$, $u_j\To u\in L^2(X)$ as $j\To\infty$ and $g_j\To g\in L^2(X)$ as $j\To\infty$. Then, $(S+\ol S)u_j\To(S+\ol S)u$ in $L^2(X)$ as $j\To\infty$ and ${\rm P\,}g_j\To{\rm P\,}g$ in $H^{-4}(X)$ as $j\To\infty$. Thus, 
\begin{equation}\label{e-gue140430I}
(\,(S+\ol S)u\,|\,{\rm P\,}g\,)=\lim_{j\To\infty}(\,(S+\ol S)u_j\,|\,{\rm P\,}g\,)=\lim_{j\To\infty}\lim_{k\To\infty}(\,(S+\ol S)u_j\,|\,{\rm P\,}g_k\,).
\end{equation}
For any $j, k$, $(\,(S+\ol S)u_j\,|\,{\rm P\,}g_k\,)=(\,{\rm P\,}(S+\ol S)u_j\,|\,g_k\,)=0$. From this observation and \eqref{e-gue140430I}, the lemma follows. 
\end{proof}

Now, we can prove 

\begin{thm}\label{t-gue140430}
The operator ${\rm P\,}:{\rm Dom\,}{\rm P\,}\subset L^2(X)\To L^2(X)$ is self-adjoint. 
\end{thm}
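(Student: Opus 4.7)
The plan is to establish ${\rm P\,}={\rm P\,}^*$ by verifying separately: (i) ${\rm Dom\,}{\rm P\,}^*\subset{\rm Dom\,}{\rm P\,}$, and (ii) ${\rm P\,}$ is symmetric on ${\rm Dom\,}{\rm P\,}$. Inclusion (i) is essentially immediate from the maximal-domain definition \eqref{e-gue140325I}: if $v\in{\rm Dom\,}{\rm P\,}^*$ with ${\rm P\,}^*v=w\in L^2(X)$, testing the adjoint identity $({\rm P\,}u|v)=(u|w)$ against smooth functions $u\in C^\infty(X)\subset{\rm Dom\,}{\rm P\,}$ forces ${\rm P\,}v=w$ as distributions, so ${\rm P\,}v\in L^2(X)$ and therefore $v\in{\rm Dom\,}{\rm P\,}$, with ${\rm P\,}^*v={\rm P\,}v$.

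For the symmetry (ii), I would use the decomposition from Lemma \ref{l-gue140430} to write $u,v\in{\rm Dom\,}{\rm P\,}$ as $u=u^0+(S+\ol S)u$ and $v=v^0+(S+\ol S)v$ with $u^0,v^0\in H^2(X)\cap{\rm Dom\,}{\rm P\,}$. A preliminary observation is that ${\rm Ran\,}(S+\ol S)\subset{\rm Ker\,}{\rm P\,}$: for smooth $w$, $Sw$ is a smooth CR function and its real and imaginary parts are pluriharmonic, hence annihilated by ${\rm P\,}$, and the same for $\ol Sw$; by density of $C^\infty(X)$ in $L^2(X)$ and continuity of the differential operator ${\rm P\,}:\mathscr D'(X)\to\mathscr D'(X)$, one obtains ${\rm P\,}(S+\ol S)w=0$ for every $w\in L^2(X)$. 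Combining this with Lemma \ref{l-gue140430I} applied in both orders, expansion of $({\rm P\,}u|v)$ collapses the $(S+\ol S)u$-contribution via ${\rm P\,}(S+\ol S)u=0$ and the $(S+\ol S)v$-contribution via $((S+\ol S)v|{\rm P\,}u^0)=0$ (using $u^0,v\in{\rm Dom\,}{\rm P\,}$), so $({\rm P\,}u|v)=({\rm P\,}u^0|v^0)$; symmetrically, $(u|{\rm P\,}v)=(u^0|{\rm P\,}v^0)$.

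It then remains to prove $({\rm P\,}u^0|v^0)=(u^0|{\rm P\,}v^0)$ for $u^0,v^0\in H^2(X)\cap{\rm Dom\,}{\rm P\,}$. My plan for this is a standard density argument: because ${\rm P\,}$ is a smooth-coefficient fourth-order operator that is formally self-adjoint on $C^\infty(X)$, it extends continuously to ${\rm P\,}:H^2(X)\to H^{-2}(X)$, and approximating $u^0,v^0$ in $H^2$ by smooth functions and passing to the limit in the formal-adjoint identity yields $\langle{\rm P\,}f,\ol g\rangle_{H^{-2},H^2}=\langle f,\ol{{\rm P\,}g}\rangle_{H^2,H^{-2}}$ for all $f,g\in H^2(X)$; when additionally ${\rm P\,}u^0,{\rm P\,}v^0\in L^2(X)$, both duality pairings reduce to the $L^2$ inner product, giving the desired equality. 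The real substance of the argument is not this final density step, which is routine, but the microlocal Hodge decomposition (Theorem \ref{t-gue140328}) underlying Lemma \ref{l-gue140430}; once that structural input is available, the self-adjointness of ${\rm P\,}$ is a clean functional-analytic packaging.
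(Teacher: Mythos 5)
Your proposal is correct and follows essentially the same route as the paper: it relies on Lemma \ref{l-gue140430} (the decomposition $u=u^0+(S+\ol S)u$ with $u^0\in H^2(X)\cap{\rm Dom\,}{\rm P\,}$, itself a consequence of the microlocal parametrix in Theorem \ref{t-gue140328}), on Lemma \ref{l-gue140430I} together with ${\rm P\,}(S+\ol S)=0$ to reduce symmetry to the $H^2$ core, and on a density argument using that ${\rm P\,}:H^2(X)\to H^{-2}(X)$ is continuous, followed by the maximal-domain observation to get ${\rm Dom\,}{\rm P^*\,}\subset{\rm Dom\,}{\rm P\,}$. The only cosmetic differences are the order in which you treat the two inclusions and the explicit justification of ${\rm Ran\,}(S+\ol S)\subset{\rm Ker\,}{\rm P\,}$, which the paper uses implicitly inside the proof of Lemma \ref{l-gue140430}.
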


\begin{proof}
Let $u, v\in{\rm Dom\,}{\rm P\,}$. From Lemma~\ref{l-gue140430}, we have 
\[\begin{split}
&u=u^0+(S+\ol S)u,\ \ u^0\in H^2(X)\bigcap{\rm Dom\,}{\rm P\,},\\
&v=v^0+(S+\ol S)v,\ \ v^0\in H^2(X)\bigcap{\rm Dom\,}{\rm P\,}.
\end{split}\]
From Lemma~\ref{l-gue140430I}, we see that 
\begin{equation}\label{e-gue140430II}
(\,u\,|\,{\rm P\,}v\,)=(\,u^0\,|\,{\rm P\,}v^0\,),\ \ (\,{\rm P\,}u\,|\,v\,)=(\,{\rm P\,}u^0\,|\,v^0\,).
\end{equation}
Let $g_j, f_j\in C^\infty(X)$, $j=1,2,\ldots$, $g_j\To u^0\in H^2(X)$ as $j\To\infty$ and $f_j\To v^0\in H^2(X)$ as $j\To\infty$. We have 
\begin{equation}\label{e-gue140430III}
\begin{split}
(\,g_j\,|\,{\rm P\,}f_j\,)&=(\,g_j-u^0\,|\,{\rm P\,}f_j\,)+(\,u^0\,|\,{\rm P\,}f_j\,)\\
&=(\,g_j-u^0\,|\,{\rm P\,}f_j\,)+(\,u^0\,|\,{\rm P\,}v^0\,)+(\,u^0\,|\,{\rm P\,}(f_j-v^0)\,).
\end{split}
\end{equation}
Now, 
\begin{equation}\label{e-gue140430IV}
\begin{split}
\abs{(\,g_j-u^0\,|\,{\rm P\,}f_j\,)}&\leq C_0\norm{g_j-u^0}_2\norm{{\rm P\,}f_j}_{-2}\\
&\mbox{$\leq  C_1\norm{g_j-u^0}_2\norm{f_j}_{2}\To0$ as $j\To\infty$}
\end{split}
\end{equation}
and 
\begin{equation}\label{e-gue140430V}
\begin{split}
\abs{(\,u^0\,|\,{\rm P\,}(f_j-v^0)\,)}&\leq C_2\norm{u^0}_2\norm{{\rm P\,}(f_j-v^0}_{-2}\\
&\mbox{$\leq  C_3\norm{u^0}_2\norm{f_j-v^0}_{2}\To0$ as $j\To\infty$},
\end{split}
\end{equation}
where $C_0>0$, $C_1>0$, $C_2>0$, $C_3>0$ are constants and $\norm{\cdot}_s$ denotes the standard Sobolev norm of order $s$ on $X$. From \eqref{e-gue140430III}, \eqref{e-gue140430IV} and \eqref{e-gue140430V}, we obtain 
\begin{equation}\label{e-gue140430VI}
(\,u^0\,|\,{\rm P\,}v^0\,)=\lim_{j\To\infty}(\,g_j\,|\,{\rm P\,}f_j\,).
\end{equation}
For each $j$, it is clearly that $(\,g_j\,|\,{\rm P\,}f_j\,)=(\,{\rm P\,}g_j\,|\,f_j\,)$. We can repeat the procedure above and conclude that 
\[\lim_{j\To\infty}(\,{\rm P\,}g_j\,|\,f_j\,)=(\,{\rm P\,}u^0\,|\,v^0\,).\]
From this observation, \eqref{e-gue140430VI} and \eqref{e-gue140430II}, we conclude that 
\begin{equation}\label{e-gue140430VII}
(\,{\rm P\,}u\,|\,v\,)=(\,u\,|\,{\rm P\,}v\,),\ \ \forall u, v\in{\rm Dom\,}{\rm P\,}.
\end{equation}
Let ${\rm P^*\,}:{\rm Dom\,}{\rm P^*\,}\subset L^2(X)\To L^2(X)$ be the Hilbert space adjoint of ${\rm P\,}$. From \eqref{e-gue140430VII}, we deduce that ${\rm Dom\,}{\rm P\,}\subset{\rm Dom\,}{\rm P^*\,}$ and ${\rm P\,}u={\rm P^*\,}u$, $\forall u\in{\rm Dom\,}{\rm P^*\,}$. 

Let $v\in{\rm Dom\,}{\rm P^*\,}$. By definition, there is a $f\in L^2(X)$ such that
\[(\,v\,|\,{\rm P\,}g\,)=(\,f\,|\,g\,),\ \ \forall g\in{\rm Dom\,}{\rm P\,}.\]
Since $C^\infty(X)\subset{\rm Dom\,}{\rm P\,}$, ${\rm P\,}v=f$ in the sense of distribution. Since $f\in L^2(X)$, $v\in{\rm Dom\,}{\rm P\,}$ and ${\rm P\,}v={\rm P^*\,}v=f$. The theorem follows. 
\end{proof}

\begin{thm}\label{t-gue140328I}
The operator ${\rm P\,}:{\rm Dom\,}{\rm P\,}\subset L^2(X)\To L^2(X)$ has closed range.
\end{thm}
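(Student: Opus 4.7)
The plan is to establish the basic a priori estimate
\[
\norm{u}\leq C\norm{{\rm P\,}u},\quad\forall u\in({\rm Ker\,}{\rm P\,})^\perp\cap{\rm Dom\,}{\rm P\,},
\]
from which the $L^2$ closed range of ${\rm P\,}$ follows by a standard Hilbert space argument (using the closedness of ${\rm P\,}$). I would argue by contradiction: suppose there exist $u_j\in({\rm Ker\,}{\rm P\,})^\perp\cap{\rm Dom\,}{\rm P\,}$ with $\norm{u_j}=1$ and $\norm{{\rm P\,}u_j}\To 0$, and extract a strongly convergent subsequence whose limit must lie in ${\rm Ker\,}{\rm P\,}\cap({\rm Ker\,}{\rm P\,})^\perp=\set{0}$, contradicting $\norm{u_j}=1$.

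The main tool is the adjoint of the parametrix from Theorem~\ref{t-gue140328}. Fix $m\geq 1$; taking the $L^2$-adjoint of ${\rm P\,}A_m+S+\ol S=I+R_m$ and using that $S,\ol S$ are self-adjoint yields
\[
u_j=A_m^*\,{\rm P\,}u_j+(S+\ol S)u_j-R_m^*u_j.
\]
By the mapping properties in Theorem~\ref{t-gue140328}, $A_m^*:L^2(X)\To L^2(X)$ is continuous, so $A_m^*\,{\rm P\,}u_j\To 0$ in $L^2(X)$, while $R_m^*:L^2(X)\To H^{(m+1)/2}(X)$ is continuous, hence compact on $L^2(X)$ by Rellich. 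The delicate term is $(S+\ol S)u_j$, since $S+\ol S$ is a pseudodifferential operator of order zero and is not compact. I would control it using Lemma~\ref{l-gue140326I}, which gives $(S+\ol S)^2=S+\ol S+F$ for some smoothing $F$, together with the inclusion ${\rm Ran\,}(S+\ol S)\subset{\rm Ker\,}{\rm P\,}$ (which follows from Lemma~\ref{l-gue140430I} by density of $C^\infty(X)\subset{\rm Dom\,}{\rm P\,}$ in $L^2(X)$ and continuity of $S+\ol S$). Indeed, for any $v\in({\rm Ker\,}{\rm P\,})^\perp$, the orthogonality $(S+\ol S)v\perp v$ yields
\[
\norm{(S+\ol S)v}^2=(\,(S+\ol S)v\,|\,v\,)+(\,Fv\,|\,v\,)=(\,Fv\,|\,v\,),
\]
transferring the $L^2$-size of $(S+\ol S)v$ to the smoothing (hence compact) operator $F$.

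Passing to a subsequence with $u_j\rightharpoonup u$ weakly in $L^2(X)$, the weakly closed subspace $({\rm Ker\,}{\rm P\,})^\perp$ contains $u$, so $u_j-u\rightharpoonup 0$ remains in $({\rm Ker\,}{\rm P\,})^\perp$. Applying the displayed identity to $v=u_j-u$ together with the strong convergence $F(u_j-u)\To 0$ (from compactness of $F$) gives $(S+\ol S)u_j\To(S+\ol S)u$ strongly in $L^2(X)$. Combined with $A_m^*\,{\rm P\,}u_j\To 0$ and $R_m^*u_j\To R_m^*u$ strongly, the parametrix identity forces $u_j\To u$ strongly in $L^2(X)$, so $\norm{u}=1$. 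Since ${\rm P\,}$ is closed (being self-adjoint by Theorem~\ref{t-gue140430}) and ${\rm P\,}u_j\To 0$, we have $u\in{\rm Ker\,}{\rm P\,}$; since also $u\in({\rm Ker\,}{\rm P\,})^\perp$, we conclude $u=0$, the desired contradiction. The main obstacle is the lack of compactness of $S+\ol S$; it is circumvented precisely by the almost-idempotent relation $(S+\ol S)^2\equiv S+\ol S$ combined with the inclusion ${\rm Ran\,}(S+\ol S)\subset{\rm Ker\,}{\rm P\,}$, which together upgrade the weak control on $(S+\ol S)u_j$ provided by $F$ to the strong control needed here.
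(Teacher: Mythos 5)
Your proof is correct, and the overall strategy — prove the a priori estimate $\norm{u}\leq C\norm{{\rm P\,}u}$ on $({\rm Ker\,}{\rm P\,})^\perp\cap{\rm Dom\,}{\rm P\,}$ by contradiction, using the adjoint parametrix $A_m^*{\rm P\,}+S+\ol S=I+R_m^*$ together with Rellich — matches the paper's. But you have made the treatment of $(S+\ol S)u_j$ substantially harder than it needs to be. You already have both ingredients needed for the clean shortcut: $S+\ol S$ is self-adjoint (you use this to pass to the adjoint identity) and ${\rm Ran\,}(S+\ol S)\subset{\rm Ker\,}{\rm P\,}$ (which you deduce from Lemma~\ref{l-gue140430I}). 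Put together, they give for any $v\in({\rm Ker\,}{\rm P\,})^\perp$ and any $w\in L^2(X)$ that $(\,(S+\ol S)v\,|\,w\,)=(\,v\,|\,(S+\ol S)w\,)=0$, since $(S+\ol S)w\in{\rm Ker\,}{\rm P\,}\perp v$; hence $(S+\ol S)u_j=0$ identically. This is exactly what the paper observes, and it collapses the identity to $u_j=A_m^*{\rm P\,}u_j-R_m^*u_j$, from which strong subconvergence follows directly by compactness of $R_m^*$, without any need for the almost-idempotent relation $(S+\ol S)^2\equiv S+\ol S$, the $\norm{(S+\ol S)v}^2=(\,Fv\,|\,v\,)$ computation, or the weak-limit bookkeeping. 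Your more elaborate route does work — and would retain some value in a hypothetical situation where $(S+\ol S)u_j$ were merely small rather than zero — but here the stronger conclusion is available for free, and you should use it.
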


\begin{proof}
Fix $m\in\mathbb N_0$, let $A_m, R_m$ be as in \eqref{e-gue140328} and let $A^*_m$ and $R^*_m$ be the adjoints of $A_m$ and $R_m$ with respect to $(\,\cdot\,|\,\cdot\,)$ respectively. Then, 
\begin{equation}\label{e-gue140328VII}
A^*_m{\rm P\,}+S+\ol S=I+R^*_m. 
\end{equation}
Now, we claim that there is a constant $C>0$ such that 
\begin{equation}\label{e-gue140328VIII}
\norm{{\rm P\,}u}\geq C\norm{u},\ \ \forall u\in{\rm Dom\,}{\rm P\,}\bigcap({\rm Ker\,}{\rm P\,})^{\perp}.
\end{equation}
If the claim is not true, then we can find $u_j\in{\rm Dom\,}{\rm P\,}\bigcap({\rm Ker\,}{\rm P\,})^{\perp}$ with $\norm{u_j}=1$, $j=1,2,\ldots$, such that 
\begin{equation}\label{e-gue140328a}
\norm{{\rm P\,}u_j}\leq\frac{1}{j}\norm{u_j},\ \ j=1,2,\ldots.
\end{equation}
From \eqref{e-gue140328VII}, we have 
\begin{equation}\label{e-gue140328aI}
u_j=A^*_m{\rm P\,}u_j+(S+\ol S)u_j-R_mu_j,\ \ j=1,2,\ldots. 
\end{equation}
Since $u_j\in({\rm Ker\,}{\rm P\,})^{\perp}$, $j=1,2,\ldots$, and ${\rm P\,}(S+\ol S)=0$, we have 
\begin{equation}\label{e-gue140328aII}
(S+\ol S)u_j=0,\ \ j=1,2,\ldots. 
\end{equation}
From \eqref{e-gue140328aI} and \eqref{e-gue140328aII}, we get 
\begin{equation}\label{e-gue140328aIII}
u_j=A^*_m{\rm P\,}u_j-R_mu_j,\ \ j=1,2,\ldots.
\end{equation}
From \eqref{e-gue140328aIII} and Rellich's theorem, we can find subsequence $\{u_{j_s}\}^{\infty}_{s=1}$, $1\leq j_1<j_2<\cdots$, $u_{j_s}\To u$ in $L^2(X)$. From \eqref{e-gue140328a}, we see that ${\rm P\,}u=0$. Hence, $u\in{\rm Ker\,}{\rm P\,}$. Since $u_j\in({\rm Ker\,}{\rm P\,})^{\perp}$, $j=1,2,\ldots$, we get a contradiction. The claim \eqref{e-gue140328VIII} follows. From \eqref{e-gue140328VIII}, the theorem follows. 
\end{proof}

In view of Theorem~\ref{t-gue140430} and Theorem~\ref{t-gue140328I}, we know that ${\rm P\,}$ is self-adjoint and ${\rm P\,}$ has closed range. Let $N:L^2(X)\To{\rm Dom\,}{\rm P\,}$ be the partial inverse and let $\Pi:L^2(X)\To{\rm Ker\,}{\rm P\,}$ be the orthogonal projection. We can prove 

\begin{thm}\label{t-gue140330}
With the notations and assumptions above, we have 
\begin{equation}\label{e-gue140330}
\begin{split}
&\mbox{$\Pi: H^s(X)\To H^s(X)$ is continuous, $\forall s\in\mathbb Z$},\\
&\mbox{$N: H^s(X)\To H^{s+2}(X)$ is continuous, $\forall s\in\mathbb Z$},\\
&\Pi\equiv S+\ol S.
\end{split}
\end{equation}
\end{thm}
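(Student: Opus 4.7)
The plan is to translate the identity ${\rm P\,}A_m+(S+\ol S)=I+R_m$ of Theorem~\ref{t-gue140328} (and its $L^2$-adjoint $A^*_m{\rm P\,}+(S+\ol S)=I+R^*_m$) into two clean operator equations for $\Pi$ and $N$ in which the remainder $R^*_m$ gains arbitrarily many Sobolev derivatives, and then run a Neumann-style iteration.

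First I would record two algebraic inputs. Since ${\rm Im\,}S={\rm Ker\,}\ddbar_b$ consists of CR functions, whose real and imaginary parts lie in $\mathcal P\subset{\rm Ker\,}{\rm P\,}$, one has ${\rm Im\,}S\cup{\rm Im\,}\ol S\subset{\rm Ker\,}{\rm P\,}$, hence ${\rm P\,}(S+\ol S)=0$; by the self-adjointness of ${\rm P\,}$ (Theorem~\ref{t-gue140430}), $(S+\ol S){\rm P\,}=0$ on ${\rm Dom\,}{\rm P\,}$, so $(S+\ol S)$ vanishes on ${\rm Im\,}{\rm P\,}$, and since ${\rm Im\,}{\rm P\,}$ is closed (Theorem~\ref{t-gue140328I}) it vanishes on $({\rm Ker\,}{\rm P\,})^\perp$; equivalently $(S+\ol S)\Pi=S+\ol S$ and $(S+\ol S)N=0$. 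Composing the adjoint identity on the right with $\Pi$ (using ${\rm P\,}\Pi=0$) and with $N$ (using ${\rm P\,}N=I-\Pi$) yields the key equations
\begin{equation*}
\Pi=(S+\ol S)-R^*_m\Pi,\qquad N=A^*_m(I-\Pi)-R^*_m N,\qquad m\in\mathbb N_0.
\end{equation*}

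Iterating the first equation $k$ times yields $\Pi=\sum_{j=0}^{k-1}(-R^*_m)^j(S+\ol S)+(-R^*_m)^k\Pi$. The finite sum is continuous $H^s\to H^s$ for every $s$ (Calder\'on--Vaillancourt for $S+\ol S\in L^0_{\frac12,\frac12}(X)$ together with the smoothing gain of $R^*_m$). For $u\in H^s$ with $s\geq0$ the remainder $(-R^*_m)^k\Pi u$ lies in $H^{k(m+1)/2}$ since $\Pi$ is $L^2$-bounded; choosing $k(m+1)/2\geq s$ gives $\norm{\Pi u}_s\leq C\norm{u}_s$, and duality via the self-adjointness of $\Pi$ extends this to $s<0$. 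Rewriting the iterate as $\Pi-(S+\ol S)=\sum_{j=1}^{k-1}(-R^*_m)^j(S+\ol S)+(-R^*_m)^k\Pi$ shows that $T:=\Pi-(S+\ol S)$ is continuous $L^2\to H^N$ for every $N$, hence by self-adjointness also $H^{-N}\to L^2$ for every $N$; consequently $T^2$ maps $H^{-N}\to H^{N'}$ for every $N,N'$ and so has $C^\infty(X\times X)$ Schwartz kernel. A short computation using $(S+\ol S)\Pi=\Pi(S+\ol S)=S+\ol S$ together with $(S+\ol S)^2\equiv S+\ol S$ (Lemma~\ref{l-gue140326I}) gives $T^2\equiv T$, whence $T\equiv 0$, i.e.\ $\Pi\equiv S+\ol S$.

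The same iteration applied to the second equation produces $N=\sum_{j=0}^{k-1}(-R^*_m)^j A^*_m(I-\Pi)+(-R^*_m)^k N$. Each summand maps $H^s\to H^{s+2}$ continuously (using $A^*_m:H^s\to H^{s+2}$ and the just-proved $\Pi:H^s\to H^s$), and for $u\in H^s$ with $s\geq 0$ the remainder $(-R^*_m)^k Nu$ lies in $H^{k(m+1)/2}\subset H^{s+2}$ once $k(m+1)/2\geq s+2$ is arranged, since $N$ is $L^2$-bounded; duality and self-adjointness of $N$ then handle $s<0$. The step most deserving of care is the vanishing of $(S+\ol S)$ on $({\rm Ker\,}{\rm P\,})^\perp$ -- this is the bridge that causes the adjoint Hodge identity to collapse, upon composition with $\Pi$ or $N$, into the two clean Neumann equations that drive the whole argument.
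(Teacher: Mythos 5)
Your proof is correct and follows essentially the same route as the paper: the microlocal Hodge identity of Theorem~\ref{t-gue140328}, the smoothing gain of $R_m$, the Neumann iteration, and the near-idempotency of $S+\ol S$ from Lemma~\ref{l-gue140326I} are all the same ingredients, and your right-composition of the adjoint identity with $\Pi$ and $N$ (using $(S+\ol S)\Pi=S+\ol S$ and $(S+\ol S)N=0$) is dual to the paper's left-composition of the original identity followed by taking adjoints. The only detail worth flagging is that for the estimate $N:H^s\To H^{s+2}$ with $s<0$, dualizing the cases $s\geq0$ via $N=N^*$ reaches only $s\leq-2$, so the remaining case $s=-1$ requires interpolating between $N:H^{-2}\To H^{0}$ and $N:H^{0}\To H^{2}$ (the paper's device of substituting the $N$-equation back into its adjoint covers all integers $s$ once $m$ is large, avoiding this interpolation).
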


\begin{proof}
Fix $m\in\mathbb N_0$. Let $A_m, R_m$ be as in Theorem~\ref{t-gue140328}. Then, 
\[{\rm P\,}A_m+S+\ol S=I+R_m.\]
Thus, 
\begin{equation}\label{e-gue140330I}
\Pi+\Pi R_m=\Pi({\rm P\,}A_m+S+\ol S)=\Pi(S+\ol S)=S+\ol S.
\end{equation}
From \eqref{e-gue140328} and \eqref{e-gue140330I}, we have 
\begin{equation}\label{e-gue140330II}
\mbox{$\Pi-(S+\ol S):H^{-\frac{m+1}{2}}(X)\To L^2(X)$ is continuous}.
\end{equation}
By taking adjoint in \eqref{e-gue140330II}, we get 
\begin{equation}\label{e-gue140330III}
\mbox{$\Pi-(S+\ol S):L^2(X)\To H^{\frac{m+1}{2}}(X)$ is continuous}.
\end{equation}
From \eqref{e-gue140330II} and \eqref{e-gue140330III}, we have 
\begin{equation}\label{e-gue140330IV}
\mbox{$\bigr(\Pi-(S+\ol S)\bigr)^2:H^{-\frac{m+1}{2}}(X)\To H^{\frac{m+1}{2}}(X)$ is continuous}.
\end{equation}
Now, 
\begin{equation}\label{e-gue140330V}
\begin{split}
\bigr(\Pi-(S+\ol S)\bigr)^2&=\Pi-\Pi(S+\ol S)-(S+\ol S)\Pi+(S+\ol S)^2\\
&=\Pi-(S+\ol S)-(S+\ol S)+S+\ol S+S\ol S+\ol SS\\
&\equiv\Pi-(S+\ol S)\ \ (\mbox{here we used Lemma~\ref{l-gue140326I}}).
\end{split}
\end{equation}
From \eqref{e-gue140330IV} and \eqref{e-gue140330V}, we conclude that 
\[\mbox{$\Pi-(S+\ol S):H^{-\frac{m+1}{2}}(X)\To H^{\frac{m+1}{2}}(X)$ is continuous.}\]
Since $m$ is arbitrary, we get 
\begin{equation}\label{e-gue140330VI}
\Pi\equiv S+\ol S. 
\end{equation}

Now, 
\begin{equation}\label{e-gue140330VII}
N({\rm P\,}A_m+S+\ol S)=N(I+R_m).
\end{equation}
Note that $N{\rm P\,}=I-\Pi$, $N\Pi=0$. From this observation, we have 
\begin{equation}\label{e-gue140330VIII}
N({\rm P\,}A_m+S+\ol S)=(I-\Pi)A_m+NF,
\end{equation}
where $F\equiv0$ (here we used \eqref{e-gue140330VI}). From \eqref{e-gue140330VIII} and \eqref{e-gue140330VII}, we have 
\begin{equation}\label{e-gue140330a}
N-A_m=-\Pi A_m+NF-NR_m.
\end{equation}
From \eqref{e-gue140330VI} and \eqref{e-gue140330a}, we have 
\begin{equation}\label{e-gue140330aI}
\begin{split}
N-A^*_m&=-A^*_m\Pi+F^*N-R^*_mN\\
&=-A^*_m\Pi+F^*(-\Pi A_m+NF-NR_m+A_m)\\
&\quad-R^*_m(-\Pi A_m+NF-NR_m+A_m)\\
&\mbox{$:H^s(X)\To H^{s+2}(X)$ is continuous, $\forall-\frac{m+1}{2}\leq s\leq\frac{m-3}{2}$, $s\in\mathbb Z$,}
\end{split}
\end{equation}
where $A^*_m$, $F^*$, $R^*_m$ are adjoints of $A_m$, $F$, $R_m$ respectively. Note that 
\[\mbox{$A^*_m:H^s(X)\To H^{s+2}(X)$ is continuous, $\forall s\in\mathbb Z$}.\]
From this observation, \eqref{e-gue140330aI} and note that $m$ is arbitrary, we conclude that 
\[\mbox{$N:H^s(X)\To H^{s+2}(X)$ is continuous, $\forall s\in\mathbb Z$}.\]
The theorem follows. 
\end{proof}

Let $\tau$ and $\tau_0$ be as in \eqref{e-gue140325IV}. Now, we can prove 

\begin{thm}\label{t-gue140330I}
We have $\tau\equiv\Pi$ on $X$, $\tau_0\equiv\Pi$ on $X$. 
\end{thm}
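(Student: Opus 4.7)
The plan is to reduce $\tau\equiv\Pi$ and $\tau_0\equiv\Pi$ to the statement that the differences $\Pi-\tau$ and $\Pi-\tau_0$ are orthogonal projections onto finite dimensional subspaces of $C^\infty(X)$. The chain $\hat{\mathcal{P}_0}\subset\hat{\mathcal{P}}\subset{\rm Ker\,}{\rm P\,}$, together with self-adjointness of the three projections, immediately gives $\tau\Pi=\Pi\tau=\tau$ and $\tau_0\Pi=\Pi\tau_0=\tau_0$. Hence $\Pi-\tau$ is the orthogonal projection onto $V:=\hat{\mathcal{P}}^\perp\bigcap{\rm Ker\,}{\rm P\,}$ and $\Pi-\tau_0$ is the orthogonal projection onto $V_0:=\hat{\mathcal{P}_0}^\perp\bigcap{\rm Ker\,}{\rm P\,}$, and it suffices to show these are smoothing.

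The key mapping property I would establish first is $(S+\overline S)L^2(X)\subset\hat{\mathcal{P}_0}$. For $u\in L^2(X)$, approximate by $u_n\in C^\infty(X)$; since $S\in L^0_{\frac12,\frac12}(X)$ sends $C^\infty(X)$ into $C^\infty(X)$, each $Su_n$ is a smooth global CR function, so ${\rm Re\,}(Su_n),{\rm Im\,}(Su_n)\in\mathcal P_0$ and $Su_n$ lies in the complex linear span of $\mathcal P_0$. Passing to the $L^2$-limit gives $Su\in\hat{\mathcal{P}_0}$, and the same argument applied to $\overline{u}_n$ gives $\overline S u=\overline{S\overline u}\in\hat{\mathcal{P}_0}$.

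With this in place, take $u\in V$ and set $F:=\Pi-S-\overline S$, which is smoothing by Theorem~\ref{t-gue140330}. Then $u=\Pi u=(S+\overline S)u+Fu$. The inner product $(\,u\,|\,(S+\overline S)u\,)$ vanishes because $(S+\overline S)u\in\hat{\mathcal{P}}$ while $u\perp\hat{\mathcal{P}}$; on the other hand, since $S$ and $\overline S$ are self-adjoint orthogonal projections, $(\,u\,|\,(S+\overline S)u\,)=\norm{Su}^2+\norm{\overline S u}^2$, forcing $Su=\overline S u=0$ and hence $u=Fu\in C^\infty(X)$. Thus $V\subset{\rm Ker\,}(I-F)\cap C^\infty(X)$; since $F$ is smoothing, hence compact on $L^2(X)$, Fredholm theory makes $V$ finite dimensional. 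The identical argument (replacing $\hat{\mathcal{P}}$ by $\hat{\mathcal{P}_0}$ throughout) gives $V_0\subset C^\infty(X)$ finite dimensional. The distribution kernel of $\Pi-\tau$ therefore equals $\sum_j e_j(x)\overline{e_j(y)}$ for an orthonormal basis $\{e_j\}$ of $V$, which is smooth on $X\times X$, so $\Pi-\tau$ is smoothing; likewise $\Pi-\tau_0$.

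The main obstacle is the mapping property of $S+\overline S$: it hinges on two facts that must interlock, namely that $S\in L^0_{\frac12,\frac12}$ produces smooth CR functions out of smooth inputs, and that the $L^2$ closure defining $\hat{\mathcal{P}_0}$ is designed precisely so that $L^2$ limits of complex combinations of elements of $\mathcal P_0$ lie in $\hat{\mathcal{P}_0}$. Once this mapping property is secured, the rest is a routine Fredholm-type argument plus the elementary fact that a finite-rank orthogonal projection with smooth range has smooth integral kernel.
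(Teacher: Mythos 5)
Your proof is correct, and it takes a genuinely different route from the paper's. The paper's argument is algebraic: after asserting $\tau(S+\ol S)=(S+\ol S)\tau=S+\ol S$ (the inclusion ${\rm Ran\,}(S+\ol S)\subset\hat{\mathcal{P}}$), it writes $S+\ol S-\tau=F\tau=\tau F^*$ with $F$ smoothing, squares to get $(S+\ol S-\tau)^2=F\tau^2F^*\equiv0$, and separately computes $(S+\ol S-\tau)^2\equiv\tau-(S+\ol S)$ via Lemma~\ref{l-gue140326I}; combining gives $\tau\equiv S+\ol S\equiv\Pi$. You instead observe that $\Pi-\tau$ is exactly the orthogonal projection onto $V=\hat{\mathcal{P}}^\perp\cap{\rm Ker\,}{\rm P\,}$ and then show directly that $V$ is a finite-dimensional subspace of $C^\infty(X)$, so that $\Pi-\tau$ has the smooth finite-rank kernel $\sum_j e_j(x)\ol{e_j(y)}$. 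The pivot of your argument --- that $Su=\ol Su=0$ for $u\in V$, forced by $(\,u\,|\,(S+\ol S)u\,)=\norm{Su}^2+\norm{\ol Su}^2=0$ --- is a nice use of positivity that the paper does not invoke. Both arguments rest on the same key mapping property ${\rm Ran\,}(S+\ol S)\subset\hat{\mathcal{P}_0}\subset\hat{\mathcal{P}}$, which the paper declares ``clearly'' and you actually prove from $S\in L^0_{1/2,1/2}$ and the definition of $\hat{\mathcal{P}_0}$ as an $L^2$ closure. What your route buys is that the finite-dimensionality and smoothness of $\hat{\mathcal{P}}^\perp\cap{\rm Ker\,}{\rm P\,}$ and $\hat{\mathcal{P}_0}^\perp\cap{\rm Ker\,}{\rm P\,}$ (the content of Corollary~\ref{c-gue140325}) come out as intermediate steps rather than being derived afterward; the paper's route is shorter once the operator identities are in hand but leaves the geometric content of $\Pi-\tau$ implicit.
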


\begin{proof}
Since $\hat{\mathcal{P}}\subset{\rm Ker\,}{\rm P\,}$, we have $\Pi\tau=\tau$. From this observation and \eqref{e-gue140330}, we get 
\begin{equation}\label{e-gue140330aII}
(S+\ol S)\tau-\tau=F\tau,
\end{equation}
where $F$ is a smoothing operator. It is clearly that $(S+\ol S)\tau=\tau(S+\ol S)=S+\ol S$. From this observation and \eqref{e-gue140330aII}, we get $S+\ol S-\tau=F\tau$ and hence $S+\ol S-\tau=\tau F^*$, where $F^*$ is the adjoint of $F$. Thus, 
\begin{equation}\label{e-gue140330aIII}
(S+\ol S-\tau)(S+\ol S-\tau)=F\tau^2 F^*\equiv0.
\end{equation}
Now, 
\begin{equation}\label{e-gue140330b}
\begin{split}
(S+\ol S-\tau)^2&=(S+\ol S)^2-(S+\ol S)\tau-\tau(S+\ol S)+\tau^2\\
&=S+S\ol S+\ol SS+\ol S-S-\ol S-S-\ol S+\tau\\
&\equiv\tau-(S+\ol S)\ \ (\mbox{here we used Lemma~\ref{l-gue140326I}}).
\end{split}
\end{equation}
From \eqref{e-gue140330b}, \eqref{e-gue140330aIII} and \eqref{e-gue140330}, we get $\tau\equiv\Pi$. 

Similarly, we can repeat the procedure above and conclude that $\tau_0\equiv\Pi$. The theorem follows. 
\end{proof}

From Theorem~\ref{t-gue140430}, Theorem~\ref{t-gue140328I}, Theorem~\ref{t-gue140330}, Theorem~\ref{t-gue140330I} and Theorem~\ref{t-gue140325II}, we get Theorem~\ref{t-gue140325}. 

\begin{cor}\label{c-gue140330}
We have 
\[\hat{\mathcal{P}}^\perp\bigcap{\rm Ker\,}{\rm P\,}\subset C^\infty(X),\ \ \hat{\mathcal{P}_0}^\perp\bigcap{\rm Ker\,}{\rm P\,}\subset C^\infty(X),\ \ \hat{\mathcal{P}_0}^\perp\bigcap\hat{\mathcal{P}}\subset C^\infty(X)\] 
and $\hat{\mathcal{P}}^\perp\bigcap{\rm Ker\,}{\rm P\,}$, $\hat{\mathcal{P}_0}^\perp\bigcap{\rm Ker\,}{\rm P\,}$, $\hat{\mathcal{P}_0}^\perp\bigcap\hat{\mathcal{P}}$ are all finite dimensional.
\end{cor}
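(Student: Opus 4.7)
The plan is to identify, in each of the three cases, the orthogonal projection onto the subspace in question with a difference of two of the projections $\Pi$, $\tau$, $\tau_0$, and then to exploit the fact (proven in Theorem~\ref{t-gue140330I} together with Theorem~\ref{t-gue140330}) that any two of these projections differ by a smoothing operator.

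First, I would use the orthogonal decompositions \eqref{e-gue140325aII}. Since $\hat{\mathcal{P}}\subset{\rm Ker\,}{\rm P\,}$, the orthogonal decomposition ${\rm Ker\,}{\rm P\,}=\hat{\mathcal{P}}\oplus(\hat{\mathcal{P}}^\perp\cap{\rm Ker\,}{\rm P\,})$ gives $\Pi=\tau+E_1$, where $E_1$ is the orthogonal projection onto $\hat{\mathcal{P}}^\perp\cap{\rm Ker\,}{\rm P\,}$. Therefore $E_1=\Pi-\tau$, which is smoothing by Theorem~\ref{t-gue140330I}. Analogously, using $\hat{\mathcal{P}_0}\subset{\rm Ker\,}{\rm P\,}$ and $\hat{\mathcal{P}_0}\subset\hat{\mathcal{P}}$, the projections $E_2$ onto $\hat{\mathcal{P}_0}^\perp\cap{\rm Ker\,}{\rm P\,}$ and $E_3$ onto $\hat{\mathcal{P}_0}^\perp\cap\hat{\mathcal{P}}$ satisfy $E_2=\Pi-\tau_0$ and $E_3=\tau-\tau_0$, both smoothing.

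For the smoothness statement, I would note that if $u$ lies in the image of $E_j$ then $u=E_j u$, and a smoothing operator sends $L^2(X)$ into $C^\infty(X)$ (its Schwartz kernel is in $C^\infty(X\times X)$). Hence $u\in C^\infty(X)$, which gives the three inclusions into $C^\infty(X)$.

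For finite dimensionality, I would invoke the following standard fact: a smoothing operator on the compact manifold $X$ is of trace class (in particular compact) on $L^2(X)$, since its kernel is smooth on the compact manifold $X\times X$. But $E_1,E_2,E_3$ are orthogonal projections; a compact orthogonal projection must have finite rank, because the identity on its range is compact, forcing the range to be finite dimensional. This yields the three finite dimensionality assertions. The only point requiring a bit of care — but not a genuine obstacle — is verifying that the identifications $E_1=\Pi-\tau$, $E_2=\Pi-\tau_0$, $E_3=\tau-\tau_0$ are truly the orthogonal projections onto the stated subspaces, which follows from the inclusions $\hat{\mathcal{P}_0}\subset\hat{\mathcal{P}}\subset{\rm Ker\,}{\rm P\,}$ together with the fact that the difference of two orthogonal projections $P_A-P_B$ with $B\subset A$ equals $P_{A\cap B^\perp}$.
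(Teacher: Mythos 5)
Your proof is correct, and its core input is the same as the paper's: Theorem~\ref{t-gue140330I} (together with Theorem~\ref{t-gue140330}), which gives that any two of $\Pi,\tau,\tau_0$ differ by a smoothing operator. But the packaging is genuinely cleaner. The paper argues by contradiction: it picks an orthonormal sequence $f_j$ in (say) $\hat{\mathcal{P}}^\perp\cap{\rm Ker\,}{\rm P\,}$, uses $\Pi f_j=f_j$ and $\tau f_j=0$ to get $f_j=(\Pi-\tau)f_j=Ff_j$ with $F$ smoothing, applies Rellich to extract an $L^2$-convergent subsequence, and contradicts orthonormality; a second pass with the same identity $f_j=Ff_j$ then gives $f_j\in C^\infty(X)$. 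You instead observe at the outset that, because $\hat{\mathcal{P}_0}\subset\hat{\mathcal{P}}\subset{\rm Ker\,}{\rm P\,}$ and the relevant projections commute (indeed $\tau\Pi=\Pi\tau=\tau$, etc.), the orthogonal projection onto $\hat{\mathcal{P}}^\perp\cap{\rm Ker\,}{\rm P\,}$ is exactly $\Pi-\tau$, and likewise $\Pi-\tau_0$, $\tau-\tau_0$ for the other two subspaces. This lets you read off both conclusions in one stroke: a smoothing operator on a compact manifold has smooth kernel, so it maps $L^2(X)$ into $C^\infty(X)$ (giving the inclusions), and it is compact, so an orthogonal projection that is smoothing must have finite rank. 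The paper's Rellich-plus-contradiction step is, in essence, a hands-on proof of precisely this last fact, so the mathematics is the same; what you gain is a more structural statement (each subspace is the range of an explicitly smoothing projection) that does the $C^\infty$ inclusion and the finite-dimensionality simultaneously instead of in two separate passes.
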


\begin{proof}
If $\hat{\mathcal{P}}^\perp\bigcap{\rm Ker\,}{\rm P\,}$ is infinite dimensional, then we can find 
\[f_j\in\hat{\mathcal{P}}^\perp\bigcap{\rm Ker\,}{\rm P\,},\ \ j=1,2,\ldots,\] 
such that $(\,f_j\,|\,f_k\,)=\delta_{j,k}$, $j,k=1,2,\ldots$. Since $f_j\in{\rm Ker\,}{\rm P\,}$, $j=1,2,\ldots$, $f_j=\Pi f_j$, $j=1,2,\ldots$. From Theorem~\ref{t-gue140330I}, we have 
\begin{equation}\label{e-gue140330bI}
f_j=\tau f_j+Ff_j,\ \ j=1,2,3,\ldots,
\end{equation}
where $F$ is a smoothing operator. Since $f_j\in\hat{\mathcal{P}}^\perp$, $j=1,2,\ldots$, $\tau f_j=0$, $j=1,2,\ldots$. From this observation and \eqref{e-gue140330bI}, we get 
\begin{equation}\label{e-gue140330bII}
f_j=Ff_j,\ \ j=1,2,3,\ldots.
\end{equation}
From \eqref{e-gue140330bII} and Rellich's theorem, we can find subsequence $\set{f_{j_s}}^{\infty}_{s=1}$, $1\leq j_1<j_2<\cdots$, $f_{j_s}\To f$ in $L^2(X)$. Since $(\,f_j\,|\,f_k\,)=\delta_{j,k}$, $j,k=1,2,\ldots$, we get a contradiction. Thus, $\hat{\mathcal{P}}^\perp\bigcap{\rm Ker\,}{\rm P\,}$ is finite dimensional. Let $\{f_1,f_2,\ldots,f_d\}$ be an orthonormal frame of  $\hat{\mathcal{P}}^\perp\bigcap{\rm Ker\,}{\rm P\,}$, $d<\infty$. As \eqref{e-gue140330bII}, we have $f_j=Ff_j$, $j=1,2,\ldots,d$. Thus, 
$f_j\in C^\infty(X)$, $j=1,2,\ldots,d$, and hence $\hat{\mathcal{P}}^\perp\bigcap{\rm Ker\,}{\rm P\,}\subset C^\infty(X)$. 

We can repeat the procedure above and conclude that 
$\hat{\mathcal{P}_0}^\perp\bigcap{\rm Ker\,}{\rm P\,}\subset C^\infty(X)$, $\hat{\mathcal{P}_0}^\perp\bigcap\hat{\mathcal{P}}\subset C^\infty(X)$, $\hat{\mathcal{P}_0}^\perp\bigcap{\rm Ker\,}{\rm P\,}$, $\hat{\mathcal{P}_0}^\perp\bigcap\hat{\mathcal{P}}$ are all finite dimensional. 
\end{proof}

\section{Spectral theory for ${\rm P\,}$}\label{s-gue140413}

In this section, we will prove Theorem~\ref{t-gue140325I}. For any $\lambda>0$, put 
\[\Pi_{[-\lambda,\lambda]}:=E([-\lambda,\lambda]),\]
where $E$ denotes the spectral measure for ${\rm P\,}$ (see section 2 in Davies~\cite{Dav95}, for the precise meaning of spectral measure). We need 

\begin{thm}\label{t-gue140413}
Fix $\lambda>0$. We have ${\rm P\,}\Pi_{[-\lambda,\lambda]}\equiv0$ on $X$. 
\end{thm}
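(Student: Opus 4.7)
The plan is to set $T:={\rm P\,}\Pi_{[-\lambda,\lambda]}$ and prove that $T$ is a smoothing operator by combining the $L^2$-bounds on the powers $T^k$ coming from the spectral theorem with the adjoint of the parametrix constructed in Theorem~\ref{t-gue140328}. Since ${\rm P\,}$ is self-adjoint (Theorem~\ref{t-gue140430}) and commutes with its spectral projection $\Pi_{[-\lambda,\lambda]}$, functional calculus yields that $T^k={\rm P\,}^k\Pi_{[-\lambda,\lambda]}$ is bounded on $L^2(X)$ with $\norm{T^k}\leq\lambda^k$ for every $k\in\mathbb N$, and $T$ itself is self-adjoint.

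First I would take the adjoint of ${\rm P\,}A_m+S+\ol S=I+R_m$ to obtain $A^*_m{\rm P\,}+S+\ol S=I+R^*_m$. The crucial input is the cancellation
\[
(S+\ol S){\rm P\,}=0\ \ \mbox{on ${\rm Dom\,}{\rm P\,}$},
\]
which follows from ${\rm P\,}(S+\ol S)=0$ by the self-adjointness of ${\rm P\,}$ (here Theorem~\ref{t-gue140430} is essential). Hence $(S+\ol S)T^k=(S+\ol S){\rm P\,}\cdot{\rm P\,}^{k-1}\Pi_{[-\lambda,\lambda]}=0$ for every $k\geq 1$, so applying the adjoint parametrix identity to $T^k$ produces the clean recursion
\[
T^k=A^*_mT^{k+1}-R^*_mT^k,\ \ k\geq 1.
\]
Iterating this $N$ times yields
\[T=(A^*_m)^N T^{N+1}-\sum_{j=0}^{N-1}(A^*_m)^j R^*_m T^{j+1}.\]

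From Theorem~\ref{t-gue140328} one has $A^*_m:H^s(X)\To H^{s+2}(X)$ and $R^*_m:H^s(X)\To H^{s+(m+1)/2}(X)$ continuously. Fixing $N\geq 1$ and choosing $m$ with $(m+1)/2\geq 2N$, and using that each $T^k$ is bounded on $L^2(X)$, every term in the iterated identity maps $L^2(X)$ continuously into $H^{2N}(X)$; thus $T:L^2(X)\To H^{2N}(X)$ is continuous. Running the same argument on the recursion for $T^k$ itself gives $T^k:L^2(X)\To C^\infty(X)$ continuously for every $k\geq 1$.

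Finally, because each $T^k$ is self-adjoint on $L^2(X)$, the $L^2\To H^M$ continuity dualises to $H^{-M}\To L^2$ continuity for $T^k$. Reinserting these sharper bounds for the rightmost factor in the iterated identity upgrades it to $T:H^{-M}(X)\To H^{2N}(X)$ for arbitrary $M,N\in\mathbb N$, i.e.\ ${\rm P\,}\Pi_{[-\lambda,\lambda]}$ is smoothing. The step that needs the most care is the justification of $(S+\ol S){\rm P\,}=0$ on ${\rm Dom\,}{\rm P\,}$ in view of the unboundedness of ${\rm P\,}$, together with the dualisation at the end; both rely crucially on the self-adjointness of ${\rm P\,}$ established in Theorem~\ref{t-gue140430}.
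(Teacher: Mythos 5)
Your proof is correct, but it takes a genuinely different route from the paper. The paper works directly with the partial inverse: starting from $N{\rm P\,}+\Pi=I$ and the fact that ${\rm Ran\,}{\rm P\,}\perp{\rm Ker\,}{\rm P\,}$, it obtains the exact identity ${\rm P\,}\Pi_{[-\lambda,\lambda]}=N{\rm P\,}^2\Pi_{[-\lambda,\lambda]}$, then invokes the already-established regularity $N:H^s(X)\To H^{s+2}(X)$ (Theorem~\ref{t-gue140330}) to bootstrap ${\rm P\,}\Pi_{[-\lambda,\lambda]}:L^2(X)\To H^m(X)$ for all $m$, and finally dualises via self-adjointness and squares to conclude smoothing. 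You instead bypass the partial inverse entirely and return to the raw parametrix of Theorem~\ref{t-gue140328}: taking adjoints of ${\rm P\,}A_m+S+\ol S=I+R_m$ and using $(S+\ol S){\rm P\,}=0$ (which follows from ${\rm P\,}(S+\ol S)=0$ and self-adjointness, and is indeed used elsewhere in the paper), you derive the clean recursion $T^k=A^*_mT^{k+1}-R^*_mT^k$ for $T={\rm P\,}\Pi_{[-\lambda,\lambda]}$, iterate it, and gain regularity from the $H^s\To H^{s+2}$ bound on $A^*_m$ together with the uniform $L^2$ bound $\norm{T^k}\leq\lambda^k$ from the spectral theorem. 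Both arguments share the same skeleton (bootstrap regularity by two orders per step, then dualise by self-adjointness and reinsert), but yours is self-contained relative to Theorem~\ref{t-gue140330}: it replaces the exact inverse relation by the approximate parametrix with error $R_m$, at the cost of the extra bookkeeping in the iterated identity $T=(A^*_m)^NT^{N+1}-\sum_{j=0}^{N-1}(A^*_m)^jR^*_mT^{j+1}$ and the need to choose $m$ large depending on the target Sobolev index. The paper's version is shorter because it reuses the finished Hodge decomposition; yours makes the dependence on the parametrix explicit and would still go through even if one had not yet packaged the regularity of $N$ as a separate theorem.
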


\begin{proof}
As before, let $N$ be the partial inverse of ${\rm P\,}$ and let $\Pi$ be the orthogonal projection onto ${\rm Ker\,}{\rm P\,}$. We have 
\begin{equation}\label{e-gue140413}
N{\rm P\,}+\Pi=I.
\end{equation}
From \eqref{e-gue140413}, we have 
\begin{equation}\label{e-gue140413I}
N{\rm P\,}^2\Pi_{[-\lambda,\lambda]}={\rm P\,}\Pi_{[-\lambda,\lambda]}.
\end{equation}
From \eqref{e-gue140325V}, \eqref{e-gue140413I} and notice that 
\[\mbox{${\rm P\,}^2\Pi_{[-\lambda,\lambda]}:L^2(X)\To L^2(X)$ is continuous},\]
we conclude that 
\begin{equation}\label{e-gue140413II}
\mbox{${\rm P\,}\Pi_{[-\lambda,\lambda]}:L^2(X)\To H^2(X)$ is continuous}.
\end{equation}
Similarly, we can repeat the procedure above and deduce that 
\begin{equation}\label{e-gue140413III}
\mbox{${\rm P\,}^2\Pi_{[-\lambda,\lambda]}:L^2(X)\To H^2(X)$ is continuous}.
\end{equation}
From \eqref{e-gue140413III}, \eqref{e-gue140413I} and \eqref{e-gue140325V}, we get 
\[
\mbox{${\rm P\,}\Pi_{[-\lambda,\lambda]}:L^2(X)\To H^4(X)$ is continuous}.
\]
Continuing in this way, we conclude that 
\begin{equation}\label{e-gue140413IV}
\mbox{${\rm P\,}\Pi_{[-\lambda,\lambda]}:L^2(X)\To H^m(X)$ is continuous, $\forall m\in\mathbb N_0$}.
\end{equation}
Note that ${\rm P\,}\Pi_{[-\lambda,\lambda]}=\Pi_{[-\lambda,\lambda]}{\rm P\,}=({\rm P\,}\Pi_{[-\lambda,\lambda]})^*$, where $({\rm P\,}\Pi_{[-\lambda,\lambda]})^*$ is the adjoint of ${\rm P\,}\Pi_{[-\lambda,\lambda]}$. By taking adjoint in \eqref{e-gue140413IV}, we get 
\[
\mbox{$\Pi_{[-\lambda,\lambda]}{\rm P\,}={\rm P\,}\Pi_{[-\lambda,\lambda]}:H^{-m}(X)\To L^2(X)$ is continuous, $\forall m\in\mathbb N_0$}.
\]
Hence, 
\begin{equation}\label{e-gue140413V}
\mbox{$({\rm P\,}\Pi_{[-\lambda,\lambda]})^2={\rm P\,}^2\Pi_{[-\lambda,\lambda]}:H^{-m}(X)\To H^m(X)$ is continuous, $\forall m\in\mathbb N_0$}.
\end{equation}
From \eqref{e-gue140413V}, \eqref{e-gue140413I} and \eqref{e-gue140325V}, the theorem follows. 
\end{proof} 

We need 

\begin{thm}\label{t-gue140413I}
For any $\lambda>0$, $\Pi_{[-\lambda,\lambda]}\equiv\Pi$ on $X$. 
\end{thm}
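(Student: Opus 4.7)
The plan is to derive the claim directly from Theorem~\ref{t-gue140413} together with the identity $N\mathrm{P}+\Pi=I$ on $\mathrm{Dom}\,\mathrm{P}$ and the fact that $\mathrm{Ker}\,\mathrm{P}$ sits inside the range of $\Pi_{[-\lambda,\lambda]}$. The key algebraic relation is
\[
\Pi\,\Pi_{[-\lambda,\lambda]}=\Pi_{[-\lambda,\lambda]}\,\Pi=\Pi,
\]
which follows from the spectral theorem applied to the self-adjoint operator $\mathrm{P}$ (Theorem~\ref{t-gue140430}), since $\mathrm{Ker}\,\mathrm{P}$ is precisely the eigenspace at $0\in[-\lambda,\lambda]$.

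The main computation is then the following. Writing
\[
\Pi_{[-\lambda,\lambda]}=\Pi\,\Pi_{[-\lambda,\lambda]}+(I-\Pi)\Pi_{[-\lambda,\lambda]}=\Pi+N\,\mathrm{P}\,\Pi_{[-\lambda,\lambda]},
\]
where in the second equality we used $I-\Pi=N\mathrm{P}$ on $\mathrm{Dom}\,\mathrm{P}$ (which applies since $\Pi_{[-\lambda,\lambda]}$ maps $L^2(X)$ into $\mathrm{Dom}\,\mathrm{P}$ by functional calculus), we obtain
\[
\Pi_{[-\lambda,\lambda]}-\Pi=N\,\mathrm{P}\,\Pi_{[-\lambda,\lambda]}.
\]
By Theorem~\ref{t-gue140413}, $\mathrm{P}\,\Pi_{[-\lambda,\lambda]}$ is smoothing, i.e.\ it maps $H^s(X)\to H^{s'}(X)$ continuously for all $s,s'\in\mathbb Z$. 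By \eqref{e-gue140325V}, $N$ maps $H^{s'}(X)\to H^{s'+2}(X)$ continuously for every $s'\in\mathbb Z$. Composing these two statements yields that $N\,\mathrm{P}\,\Pi_{[-\lambda,\lambda]}$ maps $H^s(X)$ into $H^{s'}(X)$ for every $s,s'\in\mathbb Z$, hence it is smoothing. This gives $\Pi_{[-\lambda,\lambda]}\equiv\Pi$ on $X$.

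The only step requiring a bit of care is the justification that $\Pi\,\Pi_{[-\lambda,\lambda]}=\Pi$; I would note that if $u\in\mathrm{Ker}\,\mathrm{P}$ then $\Pi_{[-\lambda,\lambda]}u=u$ by spectral calculus, hence $\Pi_{[-\lambda,\lambda]}\Pi=\Pi$, and taking adjoints (both projections are self-adjoint) gives $\Pi\,\Pi_{[-\lambda,\lambda]}=\Pi$ as well. The rest of the argument is purely formal manipulation with the already established smoothing statements, so the main obstacle — establishing that $\mathrm{P}\,\Pi_{[-\lambda,\lambda]}$ is smoothing — was already overcome in the proof of Theorem~\ref{t-gue140413} via the iterative bootstrap using the $H^{s+2}$ regularity of $N$.
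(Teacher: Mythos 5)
Your argument is correct and is essentially the same as the paper's: both prove the claim by combining the identity $N\mathrm{P}+\Pi=I$ with the smoothing property $\mathrm{P}\,\Pi_{[-\lambda,\lambda]}\equiv0$ from Theorem~\ref{t-gue140413}, the Sobolev regularity of $N$ from \eqref{e-gue140325V}, and the spectral-calculus fact that $\Pi\,\Pi_{[-\lambda,\lambda]}=\Pi$. The paper writes this as ``$\Pi\Pi_{[-\lambda,\lambda]}\equiv\Pi_{[-\lambda,\lambda]}$ on $X$'' on one hand and ``$\Pi\Pi_{[-\lambda,\lambda]}=\Pi$'' on the other, whereas you rearrange to $\Pi_{[-\lambda,\lambda]}-\Pi=N\,\mathrm{P}\,\Pi_{[-\lambda,\lambda]}$, but these are the same computation.
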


\begin{proof}
From \eqref{e-gue140413} and Theorem~\ref{t-gue140413}, we get 
\begin{equation}\label{e-gue140413VI}
\mbox{$\Pi\Pi_{[-\lambda,\lambda]}\equiv\Pi_{[-\lambda,\lambda]}$ on $X$}.
\end{equation}
On the other hand, it is clearly that $\Pi\Pi_{[-\lambda,\lambda]}=\Pi$. From this observation and \eqref{e-gue140413VI}, the theorem follows. 
\end{proof}

Now, we can prove 

\begin{thm}\label{t-gue140413II}
${\rm Spec\,}{\rm P\,}$ is a discrete subset in $\Real$ and for every $\lambda\in{\rm Spec\,}{\rm P\,}$, $\lambda\neq0$, $\lambda$ is an eigenvalue of ${\rm P\,}$ and the eigenspace 
\[H_\lambda({\rm P\,}):=\set{u\in{\rm Dom\,}{\rm P\,};\, {\rm P\,}u=\lambda u}\]
is a finite dimensional subspace of $C^\infty(X)$. 
\end{thm}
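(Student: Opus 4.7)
The plan is to deduce everything from Theorem~\ref{t-gue140413I}, which tells us that $\Pi_{[-\lambda,\lambda]}\equiv\Pi$ on $X$ for every $\lambda>0$. Since $0\in[-\lambda,\lambda]$, the kernel ${\rm Ker\,}{\rm P\,}$ is contained in the spectral subspace ${\rm Ran\,}\Pi_{[-\lambda,\lambda]}$, so $\Pi\leq\Pi_{[-\lambda,\lambda]}$ in the sense of projections. First I would observe that this implies $Q_\lambda:=\Pi_{[-\lambda,\lambda]}-\Pi$ is itself an orthogonal projection, namely the spectral projection onto the sum of eigenspaces with eigenvalues in $[-\lambda,\lambda]\setminus\{0\}$.

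Next I would upgrade the relation $\Pi_{[-\lambda,\lambda]}\equiv\Pi$ into the statement that $Q_\lambda$ is a smoothing operator, hence extends continuously as a map $L^2(X)\To C^\infty(X)$. Factoring through any Sobolev embedding, Rellich's theorem then implies $Q_\lambda$ is compact on $L^2(X)$. A compact orthogonal projection has finite-dimensional range; so ${\rm Ran\,}Q_\lambda$ is a finite-dimensional subspace of $C^\infty(X)$. Since ${\rm P\,}$ is self-adjoint, its restriction to this finite-dimensional invariant subspace is diagonalizable with real eigenvalues, all of which lie in $[-\lambda,\lambda]\setminus\{0\}$. In particular ${\rm Spec\,}{\rm P\,}\cap([-\lambda,\lambda]\setminus\{0\})$ consists of at most finitely many eigenvalues, each of finite multiplicity.

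From there, letting $\lambda\to\infty$ shows that ${\rm Spec\,}{\rm P\,}\setminus\{0\}$ is a discrete subset of $\Real$ which can only accumulate at infinity; combined with ${\rm Ker\,}{\rm P\,}$ being either $\{0\}$ or a legitimate isolated eigenspace (we already know $0$ is either not in the spectrum or is an isolated point, since ${\rm P\,}$ has closed range), this yields discreteness of ${\rm Spec\,}{\rm P\,}$ in $\Real$. For any nonzero eigenvalue $\lambda$, every eigenfunction $u\in H_\lambda({\rm P\,})$ satisfies $u\perp{\rm Ker\,}{\rm P\,}$ by orthogonality of eigenspaces of distinct eigenvalues of a self-adjoint operator, and $u\in{\rm Ran\,}\Pi_{[-|\lambda|,|\lambda|]}$ by construction, so $u=Q_{|\lambda|}u\in{\rm Ran\,}Q_{|\lambda|}\subset C^\infty(X)$; this also gives $H_\lambda({\rm P\,})\subset{\rm Ran\,}Q_{|\lambda|}$, which is finite-dimensional.

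The only step that deserves a moment's care is showing that $Q_\lambda$ really is an orthogonal projection (as opposed to just a difference of commuting projections), which uses the nesting of ranges, and that ``smoothing plus projection implies finite rank,'' which uses the Rellich compactness together with the fact that a compact idempotent is finite-rank. Both are standard once Theorem~\ref{t-gue140413I} is in hand, so there is no substantive obstacle remaining; the real work has already been absorbed into the microlocal Hodge decomposition established in Section~\ref{s-gue140326}.
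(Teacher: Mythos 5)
Your proposal is correct and rests on the same two pillars as the paper's proof: Theorem~\ref{t-gue140413I} (the spectral projection $\Pi_{[-\lambda,\lambda]}$ differs from $\Pi$ only by a smoothing operator) and Rellich compactness. The difference is one of packaging rather than substance. You isolate the clean observation that $Q_\lambda=\Pi_{[-\lambda,\lambda]}-\Pi$ is a smoothing orthogonal projection, hence compact on $L^2(X)$, hence of finite rank with range inside $C^\infty(X)$; once that single fact is in place, discreteness, the eigenvalue property, finite multiplicity, and smoothness of eigenfunctions all follow at once from finite-dimensional linear algebra on the invariant subspace ${\rm Ran\,}Q_\lambda$. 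The paper reaches the same conclusions more piecemeal: it works with $\Pi_{[-\lambda,-\mu]\cup[\mu,\lambda]}$ (which coincides with your $Q_\lambda$ once one uses the spectral gap $(-\mu,0)\cup(0,\mu)$ guaranteed by closed range), re-runs an orthonormal-sequence/Rellich contradiction separately for discreteness and for finite multiplicity, and shows that a nonzero spectral point $r$ is an eigenvalue by a Fredholm-type argument ($\mathrm{P}-r$ has closed range, so injectivity would force surjectivity and put $r$ in the resolvent set). The ``compact idempotent is finite rank'' shortcut you use is not made explicit in the paper but is entirely standard; the two arguments are otherwise the same.
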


\begin{proof}
Since ${\rm P\,}$ has $L^2$ closed range, there is a $\mu>0$ such that ${\rm Spec\,}{\rm P\,}\subset]-\infty,-\mu]\bigcup[\mu,\infty[$. Fix $\lambda>\mu$. Put $\Pi_{[-\lambda,-\mu]\bigcup[\mu,\lambda]}:=E([-\lambda,-\mu]\bigcup[\mu,\lambda])$. Note that 
\[\Pi_{[-\lambda,-\mu]\bigcup[\mu,\lambda]}=\Pi_{[-\lambda,\lambda]}-\Pi_{[-\frac{\mu}{2},\frac{\mu}{2}]}.\]
From this observation and Theorem~\ref{t-gue140413I}, we see that 
\begin{equation}\label{e-gue140413VII}
\Pi_{[-\lambda,-\mu]\bigcup[\mu,\lambda]}\equiv0.
\end{equation}
We claim that ${\rm Spec\,}{\rm P\,}\bigcap\set{[-\lambda,-\mu]\bigcup[\mu,\lambda]}$ is discrete. If not, we can find $f_j\in{\rm Rang\,}E([-\lambda,-\mu]\bigcup[\mu,\lambda])$, $j=1,2,\ldots$, with $(\,f_j\,|\,f_k\,)=\delta_{j,k}$, $j,k=1,2,\ldots$. Note that 
\[f_j=\Pi_{[-\lambda,-\mu]\bigcup[\mu,\lambda]}f_j,\ \ j=1,2,\ldots.\]
From this observation, \eqref{e-gue140413VII} and Rellich's theorem, we can find subsequence $\set{f_{j_s}}^\infty_{s=1}$, $1\leq j_1<j_2<\cdots$, $f_{j_s}\To f$ in $L^2(X)$. Since $(\,f_j\,|\,f_k\,)=\delta_{j,k}$, $j,k=1,2,\ldots$, we get a contradiction. Thus, ${\rm Spec\,}{\rm P\,}\bigcap\set{[-\lambda,-\mu]\bigcup[\mu,\lambda]}$ is discrete. Hence ${\rm Spec\,}{\rm P\,}$ is a discrete subset in $\Real$. 

Let $r\in{\rm Spec\,}{\rm P\,}$, $r\neq0$. Since ${\rm Spec\,}{\rm P\,}$ is discrete, ${\rm P\,}-r$ has $L^2$ closed range. If ${\rm P\,}-r$ is injective, then ${\rm Range\,}({\rm P\,}-r)=L^2(X)$ and 
 \[({\rm P\,}-r)^{-1}:L^2(X)\To L^2(X)\]
 is continuous. We get a contradiction. Hence $r$ is an eigenvalue of ${\rm Spec\,}{\rm P\,}$. Put 
 \[H_r({\rm P\,}):=\set{u\in{\rm Dom\,}{\rm P\,};\, {\rm P\,}u=r u}.\]
We can repeat the procedure above and conclude that ${\rm dim\,}H_r({\rm P\,})<\infty$. Take $0<\mu_0<\lambda_0$ so that $r\in\set{[-\lambda_0,-\mu_0]\bigcup[\mu_0,\lambda_0]}$. From Theorem~\ref{t-gue140413I}, we see that 
\[\Pi_{[-\lambda_0,-\mu_0]\bigcup[\mu_0,\lambda_0]}\equiv0.\]
Since 
\[H_r({\rm P\,})=\set{\Pi_{[-\lambda_0,-\mu_0]\bigcup[\mu_0,\lambda_0]}f;\, f\in H_r({\rm P\,})},\]
$H_r({\rm P\,})\subset C^\infty(X)$. The theorem follows. 
\end{proof}

\noindent
{\small\emph{\textbf{Acknowledgements.} The author would like to express his gratitude to Jeffrey Case and Paul Yang for several useful conversations.}}

\end{document}